\definecolor{webcolor}{rgb}{0.8,0,0.2}
\definecolor{webbrown}{rgb}{.6,0,0}
\numberwithin{equation}{section}
\newcommand{\CC}{\mathbb C}
\newcommand{\PP}{\mathbb P}
\newcommand{\QQ}{\mathbb Q}
\newcommand{\RR}{\mathbb R}
\newcommand{\ZZ}{\mathbb Z}
\newcommand{\OO}{\mathcal O}
  \newcommand{\calF}{\mathcal F}
\newcommand{\calN}{\mathcal N}
\newcommand{\calR}{\mathcal R}
\newcommand{\calX}{\mathcal X}
\newcommand{\calM}{\mathcal M}
\newcommand{\calT}{\mathcal T}
\newcommand{\p}{\mathfrak p}
\def\new{{\operatorname{new}}}
\def\old{{\operatorname{old}}}
\newcommand{\brak}[1]{[\![ #1 ]\!]}
\newcommand{\ang}[1]{\langle #1 \rangle}
\def\Spec{\operatorname{Spec}} 
\def\Gal{\operatorname{Gal}}
\def \GL {\operatorname{GL}}
\def \SL {\operatorname{SL}}
\def\Aut{\operatorname{Aut}}
\def\Tr{\operatorname{Tr}}
\newcommand{\defi}[1]{\textsf{#1}} 
\newcommand\blank[1]{}
\def\bbar#1{\setbox0=\hbox{$#1$}\dimen0=.2\ht0 \kern\dimen0 
\overline{\kern-\dimen0 #1}}
\newcommand{\Qbar}{{\overline{\mathbb Q}}}
\newcommand{\kbar}{\bbar{k}}
\newtheorem{thm}{Theorem}[section]
\newtheorem{lemma}[thm]{Lemma}
\newtheorem{cor}[thm]{Corollary}
\newtheorem{prop}[thm]{Proposition}
\theoremstyle{definition}
\theoremstyle{remark}
\newtheorem{remark}[thm]{Remark}
\newtheorem{example}[thm]{Example}
\newenvironment{romanenum}{\hfill \begin{enumerate} }{\end{enumerate}}
\newenvironment{alphenum}{\hfill \begin{enumerate} }{\end{enumerate}}
\newcommand\remove[1]{}
\begin{document}
\title{Computing actions on cusp forms}

\author{David Zywina}
\address{Department of Mathematics, Cornell University, Ithaca, NY 14853, USA}
\email{zywina@math.cornell.edu}
\urladdr{http://www.math.cornell.edu/~zywina}

\date{{\today}}

\begin{abstract}
For positive integers $k$ and $N$, we describe how to compute the natural action of $\SL_2(\ZZ)$ on the space of cusp forms $S_k(\Gamma(N))$, where a cusp form is given by sufficiently many terms of its $q$-expansion.    This will reduce to computing the action of the Atkin--Lehner operator on $S_k(\Gamma)$ for a congruence subgroup $\Gamma_1(N)\subseteq \Gamma \subseteq \Gamma_0(N)$.    Our motivating application of such fundamental computations is to compute explicit models of some modular curves $X_G$.
\end{abstract}

\subjclass[2010]{Primary 11F11; Secondary 11G18}


\maketitle

\section{Introduction}

Fix positive integers $k$ and $N$, and a congruence subgroup $\Gamma_1(N) \subseteq \Gamma \subseteq \Gamma_0(N)$.  Let $S_k(\Gamma)$ and $M_k(\Gamma)$ be the space of cusp forms and modular forms, respectively, of weight $k$ with respect to $\Gamma$.   In this article we explain how one can explicitly compute the action of the Atkin--Lehner involution $W_N$ on $S_k(\Gamma)$ and $M_k(\Gamma)$, where we view a modular form as being given by its $q$-expansion with enough terms known to uniquely determine it (given $k$ and $\Gamma$).  In \S\ref{SS:SL2 action}, we will explain how this allows us to compute the natural right action of $\SL_2(\ZZ)$ on $S_k(\Gamma(N))$.    In \S\ref{SS:intro modular curves}, we give an application to computing models of modular curves.  In \S\ref{SS:related}, we recall some related results.

\subsection{Background and notation}

We recall some basic definitions and conventions.  Fix a positive integer $k$.

The group $\GL_2^+(\RR)$ of $2\times 2$ real matrices with positive determinant acts on the complex upper half plane $\mathfrak{H}$ by linear fractional transformations.   For a function $f\colon \mathfrak{H}\to \CC$ and a matrix $\gamma= \left(\begin{smallmatrix}a & b \\c & d\end{smallmatrix}\right) \in \GL_2^+(\RR)$, we define the function $f|_k\gamma\colon \mathfrak{H}\to\CC$, $\tau\mapsto \det(\gamma)^{k/2} (c\tau+d)^{-k} f(\gamma\tau)$.  We will simply write $f|\gamma$ when $k$ is fixed or clear from context.   We have $f|(\gamma\gamma')=(f|\gamma)|{\gamma'}$ for all $\gamma,\gamma'\in \GL_2^+(\RR)$.    

For a congruence subgroup $\Gamma$, we denote the space of cusp and modular forms by $S_k(\Gamma)$ and $M_k(\Gamma)$, respectively.  They consist of holomorphic functions $f\colon \mathfrak{H} \to \CC$ that satisfy $f|\gamma=f$ for all $\gamma\in \Gamma$ along with usual conditions at the cusps.    Let $w$ be the width of the cusp of $\Gamma$ at $\infty$, i.e., the smallest positive integer $w\geq 1$ for which $\left(\begin{smallmatrix}1 & w \\0 & 1\end{smallmatrix}\right)$ lies in $\Gamma$.  For each modular form $f\in M_k(\Gamma)$, we have
\[
f(\tau) = \sum_{n=0}^\infty a_n(f)\, q_w^{n}
\]
for unique $a_n(f) \in \CC$, where $q_w:=e^{2\pi i \tau/w}$; this is the \defi{Fourier series} or  \defi{$q$-expansion} of $f$.  When $w=1$, we will simply write $q$ for $q_1$.   For a subring of $R$ of $\CC$, we denote by $M_k(\Gamma,R)$ and $S_k(\Gamma,R)$ the $R$-module consisting of modular forms $f$ in $M_k(\Gamma)$ and $S_k(\Gamma)$, respectively, for which all of the coefficients $a_n(f)$ lie in $R$. \\

Fix a positive integer $N$ and a congruence subgroup $\Gamma_0(N) \subseteq \Gamma \subseteq \Gamma_1(N)$.  Since the matrix $\left(\begin{smallmatrix}0 & -1 \\N & 0\end{smallmatrix}\right)$ normalizes $\Gamma$, we obtain an automorphism 
\[
M_k(\Gamma) \xrightarrow{\sim} M_k(\Gamma),\quad f\mapsto N^{k/2} \cdot f| \left(\begin{smallmatrix}0 & -1 \\N & 0\end{smallmatrix}\right) =: f|W_N
\]
which we will call the \defi{Atkin--Lehner operator} of $M_k(\Gamma)$.   We have $(f|W_N)(\tau)=\tau^{-k} f(-1/(N\tau))$ and 
\[
(f|W_N)|W_N= (-1)^k N^k\cdot f.
\] 
Note that the subspace $S_k(\Gamma)$ is stable under the action of $W_N$.
\begin{remark}
In the literature, our operator $W_N$ is often scaled by a factor of $N^{-k/2}$; in particular, it would then be an involution when $k$ is even.  Our version has nicer arithmetic properties when $k$ is odd.  For example, $M_k(\Gamma_0(N),\QQ)$ is always stable under the action of $W_N$ using our normalization.  
\end{remark}

For $d\in (\ZZ/N\ZZ)^\times$, the \defi{diamond operator} $\ang{d}$ acts on $M_k(\Gamma)$ and $S_k(\Gamma)$; we have $f|\ang{d} := f|\gamma$, where $\gamma\in \SL_2(\ZZ)$ is any matrix satisfying $\gamma\equiv \left(\begin{smallmatrix}d^{-1} & * \\0 & d\end{smallmatrix}\right) \pmod{N}$.

Take any automorphism $\sigma$ of the field $\CC$ and any modular form $f\in M_k(\Gamma(N))$.  Let $\sigma(f)$ be the modular form in $M_k(\Gamma(N))$ whose $q$-expansion is obtained by applying $\sigma$ to the coefficients of the $q$-expansion of $f$.   This defines an action of $\Aut(\CC)$ on $M_k(\Gamma(N))$.   The subspaces $M_k(\Gamma)$ and $S_k(\Gamma)$ are stable under the action of $\Aut(\CC)$, where $\Gamma=\Gamma(N)$ or $\Gamma_0(N) \subseteq \Gamma \subseteq \Gamma_1(N)$.  

Finally, for each positive integer $N$, we define the $N$-th root of unity $\zeta_N:= e^{2\pi i /N} \in \CC^\times$.

\subsection{Setup} \label{SS:setup intro}

Fix positive integers $k$ and $N$.  Let $\calM$ be a subspace of $M_k(\Gamma_1(N))$.   For each subring $R$ of $\CC$, we define $\calM(R):=\calM \cap M_k(\Gamma_1(N),R)$, i.e., the $R$-submodule of $\calM$ consisting of modular forms whose Fourier coefficients all lie in $R$.  

Assume that $\calM$ satisfies all the following conditions:
\begin{alphenum}
\item \label{c:a}
$\calM$ is stable under the action of $W_N$.
\item \label{c:b}
$\calM$ is stable under the action of the diamond operators $\ang{d}$ with $d\in (\ZZ/N\ZZ)^\times$,
\item \label{c:c}
$\calM(\ZZ)$ spans $\calM$ as a $\CC$-vector space,
\item \label{c:d}
the $\ZZ$-module $\calM(\ZZ)$ has a basis $f_1,\ldots, f_g$, where each $f_j$ is given by its $q$-expansion for which we can compute an arbitrary number of terms. 
\end{alphenum}

By our assumptions, $f_1,\ldots, f_g$ is a basis of $\calM$.  The main goal of this paper is explain how to compute the action of the Atkin--Lehner operator $W_N$ on the space $\calM$ with respect to a fixed basis $f_1,\ldots, f_g$.   Equivalently, we will give an algorithm to compute the unique matrix $W \in \GL_g(\CC)$ satisfying
\[
f_j|W_N = \sum_{k=1}^g  W_{j,k}\cdot f_k
\]
for all $1\leq j \leq g$.   We will see that all the entries of $W$ lie in the cyclotomic field $\QQ(\zeta_N)$.

We are motivated by the following examples of spaces $\calM$.

\begin{example}
The spaces $M_k(\Gamma)$ and $S_k(\Gamma)$, where $\Gamma$ is a congruence subgroup with $\Gamma_1(N) \subseteq \Gamma \subseteq \Gamma_0(N)$, satisfy conditions (\ref{c:a})--(\ref{c:d}).  We shall verify these conditions in \S\ref{S:numerical}.
\end{example}

\begin{example} \label{Ex:newform M}
Fix a newform $f\in S_k(\Gamma_1(N))$ of level $N$.     Let $\calM_f$ be the $\CC$-subspace of $S_k(\Gamma_1(N))$ generated by $\sigma(f)$ with $\sigma\in \Aut(\CC)$.  In \S\ref{SS:Mf}, we will verify that conditions (\ref{c:a})--(\ref{c:d}) hold for $\calM_f$.  There is a unique $c\in \CC$ with absolute value $N^{k/2}$ satisfying
\[
f|W_N = c \, \bbar{f},
\]
where $\bbar{f}$ is obtained by applying complex conjugation to the coefficients of the $q$-expansion of $f$, cf.~Proposition~\ref{P:W action on newforms}.  After expressing $f$ as a linear combination of the cusp forms $f_1,\ldots,f_g$, one can use the matrix $W$ to compute the exact value of $c$.
\end{example}

\subsection{The algorithm} \label{SS:the algorithm}

Fix notation and assumptions as in \S\ref{SS:setup intro}.   We now describe our algorithm to compute the matrix $W$; its validity will be proved in \S\ref{S:verification}.   The main idea is to use numerical approximations of $W$ to compute the true value.
\footnote{See \url{http://pi.math.cornell.edu/~zywina/papers/AtkinLehner/} for a basic implementation, in \texttt{Magma}, of the algorithms in this paper.}\\

For a fixed integer $n\geq 1$, let $A$ be the $g \times n$ matrix satisfying $A_{i,j}=a_{j-1}(f_i)$, where $f_i = \sum_{j=0}^\infty a_j(f_i) q^j$.  By taking $n$ large enough and using that $f_1,\ldots, f_g$ is a basis of $\calM(\ZZ)$, we may assume that $A\in M_{g,n}(\ZZ)$ has rank $g$.   

Recall that a matrix in $M_{g,n}(\ZZ)$ is in \defi{Hermite normal form} if it satisfies all the following conditions:
\begin{itemize}
\item
it is upper triangular and its zero rows lie below all the non-zero rows,
\item
the \emph{pivot} in each non-zero row, i.e., the first non-zero entry, is positive and strictly to the right of all pivots in rows that lie above it,
\item
the entries of the matrix above a pivot are all non-negative and smaller than the pivot.
\end{itemize}
There is a matrix $U \in \SL_g(\ZZ)$ such that $H:=UA$ is in Hermite normal form.   The matrix $H$, though not $U$, is uniquely determined.   With a change of basis of $\calM(\ZZ)$ given by $U$, one could take the basis $f_1,\ldots, f_g$ so that $A$ is in Hermite normal form; this would give a distinguished basis of $\calM(\ZZ)$ by the uniqueness of $H$.  We define $\alpha$ to be the product of all the pivots of $H$.

  Let $Q$ be the smallest positive divisor $N$ for which the action of the diamond operators $\ang{d}$ on $\calM$ depends only on the value of $d$ modulo $Q$.      The diamond operators thus give an action of $(\ZZ/Q\ZZ)^\times$ on $\calM$.   For each $d\in (\ZZ/Q\ZZ)^\times$, let 
  $D_d \in M_g(\CC)$ be the matrix that satisfies $f_j|\ang{d} = \sum_{k=1}^g (D_d)_{j,k} \cdot f_k$ for all $1\leq j\leq g$.  Moreover, we will see later that $D_d \in \GL_g(\ZZ)$.   \\
  
For an integer $0 \leq b \leq \varphi(Q)-1$, where $\varphi$ is the Euler totient function, define the matrix
\[
\beta_b:= W \cdot \sum_{d\in (\ZZ/Q\ZZ)^\times} \zeta_Q^{db} \, D_d.
\]
In \S\ref{S:verification}, we will prove that the matrix $W$ lies in $M_g(\QQ(\zeta_Q))$ and satisfies $\Tr_{\QQ(\zeta_Q)/\QQ}(\zeta_Q^b W) = \beta_b$, where the trace of a matrix is taken entry by entry.  In particular, $\beta_b \in M_g(\QQ)$.  Define the integer 
 \[
B_{k,N}:= \prod_{p|N} p^{\lceil  k/(p-1)\rceil},
\]
where $\lceil{x}\rceil$ denotes $x$ rounded up to the nearest integer.  We will prove that $B_{k,N}\, \alpha \cdot W$ lies in $M_g(\ZZ[\zeta_Q])$ and hence $B_{k,N} \,\alpha \cdot \beta_b \in M_g(\ZZ)$.

In \S\ref{SS:numerical}, we will observe that $W$ and $D_d$ can be numerically approximated in $M_g(\CC)$.  By approximating $D_d\in \GL_g(\ZZ)$ to a sufficiently high accuracy, we can compute $D_d$.   By computing $W$ to a sufficiently high accuracy, we can approximate the entries of $B_{k,N} \alpha \cdot \beta_b \in M_g(\ZZ)$ so that they can be explicitly determined.  With this approach, we are now able to compute the matrices $\beta_b \in M_g(\QQ(\zeta_N))$  for all $0\leq b \leq \varphi(N)-1$.   \\

Finally observe that $W$ is the unique matrix in $M_g(\QQ(\zeta_Q))$ satisfying $\Tr_{\QQ(\zeta_Q)/\QQ}(\zeta_Q^b W)= \beta_b$ for all $0\leq b \leq \varphi(Q)-1$.  Indeed, note that the map
\[
\calT \colon \QQ(\zeta_Q) \to \QQ^{\varphi(Q)},\quad x\mapsto (\Tr_{\QQ(\zeta_Q)/\QQ}(\zeta_Q^b x))_{0 \leq b \leq \varphi(Q)-1}
\]
is an isomorphism of $\QQ$-vector spaces (the pairing $\QQ(\zeta_Q)\times \QQ(\zeta_Q) \to \QQ$, $(x,y)\mapsto \Tr_{\QQ(\zeta_Q)/\QQ}(xy)$ is non-degenerate and $1,\zeta_Q,\ldots, \zeta_Q^{\varphi(Q)-1}$ is a basis for $\QQ(\zeta_Q)$ over $\QQ$).     By computing $\Tr_{\QQ(\zeta_Q)/\QQ}(\zeta_Q^b \zeta_Q^a)$ with $0\leq a,b \leq \varphi(Q)-1$, we can compute $\calT$ and its inverse.    Using this, we can compute the desired matrix $W$.

\subsection{Action of $\SL_2(\ZZ)$ on $S_k(\Gamma(N))$}
\label{SS:SL2 action}

Fix positive integers $k$ and $N$.     We now explain how, using the algorithm of \S\ref{SS:the algorithm}, we can compute the natural right action of $\SL_2(\ZZ)$ on $S_k(\Gamma(N))$.  

The matrices $S:=\left(\begin{smallmatrix}0 & -1 \\1 & 0\end{smallmatrix}\right)$ and $T:=\left(\begin{smallmatrix}1 & 1 \\0 & 1\end{smallmatrix}\right)$ generate $\SL_2(\ZZ)$, so to describe the action of $\SL_2(\ZZ)$ on $S_k(\Gamma(N))$ it suffices to describe how $S$ and $T$ act.   The action of $T$ is straightforward since it fixes the cusp at infinity.  For any $h=\sum_{n=1}^\infty a_n(h) q_N^{n} \in S_k(\Gamma(N))$, with $q_N=e^{2\pi i \tau/N}$, we have
\[
h | T = \sum_{n=1}^\infty a_n(h) \zeta_N^n \cdot q_N^{n}.
\]

Define $\Gamma:=\Gamma_0(N^2)\cap \Gamma_1(N)$.   Using the algorithm of \S\ref{SS:the algorithm} with level $N^2$ instead of $N$, we can compute an explicit basis $f_1,\ldots, f_g$ of the $\ZZ$-module $S_k(\Gamma,\ZZ)$ and a matrix $W\in  \GL_g(\QQ(\zeta_N))$ satisfying $f_j|W_{N^2} = \sum_{k=1}^g  W_{j,k}\cdot f_k$.   Since $\Gamma(N)=\left(\begin{smallmatrix}N & 0 \\0 & 1\end{smallmatrix}\right) \Gamma \left(\begin{smallmatrix}N & 0 \\0 & 1\end{smallmatrix}\right)^{-1}$, we have an isomorphism 
\[
\beta\colon S_k(\Gamma) \xrightarrow{\sim}  S_k(\Gamma(N)),\quad f\mapsto N^{k/2} \cdot f|\left(\begin{smallmatrix}1 & 0 \\0 & N\end{smallmatrix}\right)
\]
of complex vector spaces which on $q$-expansions satisfies $\beta(\sum_{n=1}^\infty a_n q^n)=\sum_{n=1}^\infty a_n q_N^{n}$.  For each $1\leq j \leq g$, define $h_j:=\beta(f_j)=\sum_{n=1}^\infty a_n(f_j) q_N^{n}$.   The cusp forms $h_1,\ldots, h_g$ are a basis of the $\ZZ$-module $S_k(\Gamma(N),\ZZ)$.

For any $f\in S_k(\Gamma)$, we have
\[
\beta(f|W_{N^2}) = N^k \cdot \beta(f) |\big(\left(\begin{smallmatrix}1 & 0 \\0 & N\end{smallmatrix}\right)^{-1} \left(\begin{smallmatrix}0 & -1 \\N^2 & 0\end{smallmatrix}\right)  \left(\begin{smallmatrix}1 & 0 \\0 & N\end{smallmatrix}\right) \big)= N^k \cdot \beta(f) | \left(\begin{smallmatrix}0 & -N \\N & 0\end{smallmatrix}\right)=N^k \cdot \beta(f)|S.
\]
Therefore, 
\[
h_j | S = \beta(f_j)|S = N^{-k} \beta(f_j|W_{N^2}) = N^{-k} \beta(\sum_{k=1}^g W_{j,k} \cdot f_k) = \sum_{k=1}^g N^{-k} W_{j,k} \cdot \beta(f_k) = \sum_{k=1}^g N^{-k} W_{j,k} \cdot h_k.
\]
So the action of $S$ on the basis $h_1,\ldots, h_g$ of $S_k(\Gamma(N))$ is given by the matrix $W$.  In \S\ref{SS:the algorithm}, we gave an algorithm to compute $W$. 

\subsection{Modular curves} \label{SS:intro modular curves}

Fix an integer $N>1$ and let $G$ be a subgroup of $\GL_2(\ZZ/N\ZZ)$ that satisfies $\det(G)=(\ZZ/N\ZZ)^\times$ and $-I\in G$.   Associated to the group $G$, is a \defi{modular curve} $X_G$; it is a smooth projective and geometrically irreducible curve $X_G$ defined over $\QQ$.    In \S\ref{S:modular curves}, we give a definition of $X_G$ and also give a connection with elliptic curves.

There is a natural right action of $\GL_2(\ZZ/N\ZZ)$ on the $\QQ$-vector space $S_2(\Gamma(N),\QQ(\zeta_N))$ characterized by the following properties: 
\begin{itemize} 
\item
 The group $\SL_2(\ZZ/N\ZZ)$ acts via the right action of $\SL_2(\ZZ)$ described in \S\ref{SS:SL2 action}.   
 \item
 A matrix $\left(\begin{smallmatrix}1 & 0 \\ 0 & d\end{smallmatrix}\right)$ acts on a cusp form by applying $\sigma_d$ to the coefficients of its $q$-expansion, where $\sigma_d$ is the automorphism of $\QQ(\zeta_N)$ satisfying $\sigma_d(\zeta_N)=\zeta_N^d$.
\end{itemize}
See \S3 of \cite{BN2019} for an explanation of why this action is well-defined.

From \S\ref{SS:SL2 action} and our algorithm in \S\ref{SS:the algorithm}, we can compute a basis $f_1,\ldots, f_g$ of the $\QQ$-vector space $S_2(\Gamma(N),\QQ(\zeta_N))^G$, where each $f_j$ is given by its $q$-expansion for which we can compute arbitrarily many terms.    In \S\ref{S:modular curves}, we will see that $S_2(\Gamma(N),\QQ(\zeta_N))^G$ is naturally isomorphism to $H^0(X_G,\Omega_{X_G})$.  Let $\omega_1,\ldots, \omega_g$ be the basis of $H^0(X_G,\Omega_{X_G})$ corresponding to $f_1,\ldots, f_g$.   In particular, the genus of $X_G$ is $g$.

Now assume that $g\geq 2$.  The morphism 
\[
\varphi\colon X_G \to \PP^{g-1}_\QQ,\quad P\mapsto [\omega_1(P),\ldots, \omega_g(P)]
\]
is called the \defi{canonical map} and it is uniquely determined up to an automorphism of $\PP^{g-1}_\QQ$. The image $C:=\varphi(X_G)$ is the \defi{canonical curve} of $X_G$.

We will see that the homogenous ideal $I(C)\subseteq \QQ[x_1,\ldots, x_g]$ of the curve $C$ is generated by the homogeneous polynomials $F \in \QQ[x_1,\ldots, x_g]$ for which $F(f_1,\ldots, f_g)=0$.   In \S\ref{S:modular curves}, we describe how to find a set of generators of the ideal $I(C)$ from enough terms of the $q$-expansions of the cusp forms $f_1,\ldots, f_g$, and hence compute the curve $C$.

If $X_G$ is (geometrically) hyperelliptic, then $C$ has genus $0$ and $\varphi$ has degree $2$.  If $X_G$ is not hyperelliptic, then $\varphi$ is an embedding and hence $X_G$ and $C$ are isomorphic curves.

\begin{remark}
The bottleneck in the above approach to computing modular curves is that $S_2(\Gamma(N),\QQ(\zeta_N))$ with its $\SL_2(\ZZ)$-action becomes harder to compute as $N$ grows.  In particular, note that $S_2(\Gamma(N),\QQ(\zeta_N))^G$ often has much smaller dimension than $S_2(\Gamma(N),\QQ(\zeta_N))$.   The original goal of this paper was to show that this obvious and direct approach is actually computable.
\end{remark}

\subsection{Examples}

We now give a few basic examples.

 \begin{example}
Define the congruence subgroup $\Gamma:=\Gamma_0(49) \cap \Gamma_1(7)$; it has level $N=49$.  Then there is a unique basis $\{f_1,f_2,f_3\}$ of the $\ZZ$-module $S_2(\Gamma,\ZZ)$ satisfying:
\begin{align*}
f_1  =   q - 3q^8 + 4q^{22}  + \ldots, \quad
f_2 =    q^2 - 3q^9 - q^{16}  + \ldots, \quad
f_3 =    q^4 - 4q^{11} + 3q^{18}  + \ldots.
\end{align*}
We have $f_j | \left(\begin{smallmatrix}0 & -1 \\1 & 0 \end{smallmatrix}\right)  = \sum_{k=1}^3  W_{j,k} \cdot f_k$ for a unique matrix $W\in \GL_3(\CC)$.   Using the algorithm of \S\ref{SS:the algorithm}, we find that
\[
W=7\cdot \left(\begin{array}{ccc} -3\xi^2 - 2\xi + 2   &  2\xi^2 - \xi - 6  &  -\xi^2 - 3\xi + 3 \\ 2\xi^2 - \xi - 6  &  \xi^2 + 3\xi - 3  & 3\xi^2 + 2\xi - 2 \\ -\xi^2 - 3\xi + 3  &  3\xi^2 + 2\xi - 2 &  2\xi^2 - \xi - 6 \end{array}\right),
\]
where $\xi := \zeta_{7}+\zeta_{7}^{-1}$.

Now consider the modular curve $X(7):=X_G$ over $\QQ$, where $G$ is the subgroup of $\GL_2(\ZZ/7\ZZ)$ consisting of matrices of the form $\pm \left(\begin{smallmatrix}1 & 0 \\0 & *\end{smallmatrix}\right)$.  For $1\leq j \leq 3$, let $h_j$ be the cusp form in $S_2(\Gamma(7),\ZZ)$ with the same $q$-expansion as $f_j$ except $q$ is replaced by $q_7$.    From the discussion in \S\ref{SS:SL2 action}, we find that $h_1,h_2,h_3$ is a basis of the $\QQ$-vector space $S_2(\Gamma(7),\QQ)=S_2(\Gamma(7),\QQ)^G$.    Applying the methods of \S\ref{S:modular curves}, we find that $X(7)$ has genus $3$ and $F(h_1,h_2,h_3)=0$, where $F(x,y,z)=x^3z-xy^3+yz^3$.    In particular, we deduce that the curve $X(7)$ is isomorphic to the curve in $\PP^2_\QQ$ defined by the equation $x^3z-xy^3+yz^3=0$ (which up to changing the sign of $z$ is the Klein quartic).  For more on the curve $X(7)$, see \cite{MR1722413}.
\end{example}

\begin{example}
Let $G$ be the subgroup of $\GL_2(\ZZ/13\ZZ)$ generated by $\left(\begin{smallmatrix}2 & 0 \\0 & 2\end{smallmatrix}\right)$, $\left(\begin{smallmatrix}1 & 0 \\0 & 5\end{smallmatrix}\right)$, $\left(\begin{smallmatrix}0 & -1 \\1 & 0\end{smallmatrix}\right)$ and $\left(\begin{smallmatrix}1 & 1 \\-1 & 1\end{smallmatrix}\right)$.    The group $G$ contains the scalar matrices in $\GL_2(\ZZ/13\ZZ)$ and its image in $\operatorname{PGL}_2(\ZZ/13\ZZ)$ is isomorphic to the symmetric group $S_4$; these properties uniquely characterize $G$ up to conjugation in $\GL_2(\ZZ/13\ZZ)$.  A model for $X_G$ was first computed by Banwait and Cremona in \cite{MR3263141}.

Set $\zeta:=\zeta_{13}$.  The $\QQ(\zeta)$-vector space $S_2(\Gamma(N),\QQ(\zeta))$ has dimension $50$.   Using the algorithm from \S\ref{SS:the algorithm} and \S\ref{SS:SL2 action}, we can find a basis of this space as well as the natural $\SL_2(\ZZ)$-action.   A computation then shows that the $\QQ$-vector space $S_2(\Gamma(N),\QQ(\zeta))^G$ has dimension $3$ and there is a basis $f_1,f_2,f_3$ characterized by the following $q$-expansions:
{\small
\begin{align*}    
f_1 = & q_{13} + ( \zeta^{11} +  \zeta^{10} +  \zeta^3 +  \zeta^2) q_{13}^2 + (- \zeta^{11} -  \zeta^{10} +
         \zeta^9 +  \zeta^7 +  \zeta^6 +  \zeta^4 -  \zeta^3 -  \zeta^2 + 2) q_{13}^3 +
        \cdots,\\
f_2 = & ( \zeta^{11} +  \zeta^{10} +  \zeta^3 +  \zeta^2) q_{13} + (-4  \zeta^{11} - 4  \zeta^{10} -
         \zeta^9 -  \zeta^7 -  \zeta^6 -  \zeta^4 - 4  \zeta^3 - 4  \zeta^2 - 3) q_{13}^2 \\
         &+  (3  \zeta^{11} + 3  \zeta^{10} - 3  \zeta^9 - 3  \zeta^7 - 3  \zeta^6 - 3  \zeta^4
        + 3  \zeta^3 + 3  \zeta^2 - 5) q_{13}^3 + \cdots, \\
f_3 = &    ( \zeta^9 +  \zeta^7 +  \zeta^6 +  \zeta^4) q_{13} + (4  \zeta^{11} + 4  \zeta^{10} +
        2  \zeta^9 + 2  \zeta^7 + 2  \zeta^6 + 2  \zeta^4 + 4  \zeta^3 + 4  \zeta^2 +
        5) q_{13}^2 \\
        &+ (- \zeta^{11} -  \zeta^{10} + 2  \zeta^9 + 2  \zeta^7 + 2  \zeta^6 +
        2  \zeta^4 -  \zeta^3 -  \zeta^2 + 4) q_{13}^3 + \cdots.
\end{align*}
 }  
Moreover, we have chosen $f_1,f_2,f_3$ so that it is a basis of the $\ZZ$-module $S_2(\Gamma(N),\QQ(\zeta))^G \cap S_2(\Gamma(N), \ZZ[\zeta])$.  Applying the methods of \S\ref{S:modular curves}, we find that $F(f_1,f_2,f_3)=0$, where $F(x,y,z)$ is the polynomial
{
\begin{align*}
&13 x^4 + 13 x^3 y + 25 x^3 z - 8 x^2 y^2 + 9 x^2 y z + 3 x^2 z^2 - 20 x y^3 \\ &- 39 x y^2 z - 34 x y z^2 - 12 x z^3 - 6 y^4 - 15 y^3 z - 14 y^2 z^2 - 5 y z^3.
\end{align*}
}  
We deduce that $X_G$ is isomorphic to the curve in $\PP^2_\QQ$ defined by the equation $F(x,y,z)=0$.



\end{example}        

We have also used our methods to verify models of various modular curves arising from non-split Cartans that occur in the literature; see \cites{MR3253304,MS,MR3867436}.    One benefit of our approach is that it works for general $G$ and does not require any special representation theory.

\subsection{Some related results} \label{SS:related}

There is an alternate method using Eisenstein series to compute the $\SL_2(\ZZ)$-action on the full space $M_k(\Gamma(N))$ where $k\geq 2$; this was not known to the author until after the first draft of this paper was completed.  For simplicity, assume that $N\geq 3$.  In \cite{BN2019}, Brunault and Neururer considered the $\CC$-subalgebra $\calR_N$ of $M_*(\Gamma(N)):= \bigoplus_{k\geq 1} M_k(\Gamma(N))$ generated by certain Eisenstein series $E_{a,b}^{(1)}$ with $a,b\in \ZZ/N\ZZ$.  Citing work of Khuri--Makdisi, they observe that the $k$-th graded part of $\calR_N$ agrees with $M_k(\Gamma(N))$ for all $k\geq 2$.  The $q$-expansion of the $E_{a,b}^{(1)}$ lie in $\QQ(\zeta_N)$ and the right action of $\SL_2(\ZZ)$ on them is explicit.  So for $k\geq 2$, we obtain a basis of $M_k(\Gamma(N),\QQ(\zeta_N))$ along with the action of $\SL_2(\ZZ)$ with respect to this basis.
  
It would be interesting to compare the efficiency of our algorithm versus an Eisenstein series approach to computing the action of $\SL_2(\ZZ)$ on $M_k(\Gamma(N))$.  In \S2.3 of \cite{MR3889557}, Cohen (who is using Eisenstein series to computing the $q$-expansion of a modular form at all cusps) notes that for large $N$, one needs to work numerically in $\CC$ and then if desired use LLL-type algorithms to recongnize the coefficients.   So a reasonable approach would be to use Eisenstein series to do numerical approximations and then the methods of this paper to determine coefficients precisely.    Since the algorithm of \S\ref{SS:the algorithm} requires only recognizing rational integers, it should not need as much accuracy as an LLL-type algorithm.  We will study the Eisenstein series approach in future work.

Collins and Cohen \cites{Collins,MR3889557} have both recently described how to numerically compute the $q$-expansion of a modular form at all of its cusps (both of these papers are interested in numerically computing Petersson inner products).  In private communications, David Loeffler has observed that there is a purely algebraic approach to computing Atkin--Lehner operators using modular symbols.

\subsection{Acknowledgements}  
Thanks to Jeremy Rouse for corrections to an earlier version.
  
 \section{Arithmetic of the Atkin--Lehner operator}
\label{S:arithmetic} 
 
 Fix positive integers $k$ and $N$.  In this section, we prove some arithmetic facts about the action of the Atkin--Lehner operator $W_N$ and the diamond operators on the space of modular forms $M_k(\Gamma_1(N))$.
 
 For each automorphisms $\sigma$ of $\CC$, we have $\sigma(\zeta_N)=\zeta_N^{\chi_N(\sigma)}$ for a unique $\chi_N(\sigma)\in (\ZZ/N\ZZ)^\times$.
 
\begin{thm}  \label{T:AL arithmetic}
Take any congruence subgroup $\Gamma_1(N) \subseteq \Gamma \subseteq \Gamma_0(N)$.  
\begin{romanenum}
\item \label{T:AL arithmetic i}
Let $B$ be a $\ZZ[1/N,\zeta_N]$-subalgebra of $\CC$.     The map $M_k(\Gamma,B) \to M_k(\Gamma,B)$, $f\mapsto f|W_N$ is an isomorphism of $B$-modules.

\item \label{T:AL arithmetic ii}
Let $B$ be a subring of $\CC$. For any $d\in (\ZZ/N\ZZ)^\times$, the map $M_k(\Gamma,B) \to M_k(\Gamma,B)$, $f\mapsto f|\ang{d}$ is an isomorphism of $B$-modules.
\item \label{T:AL arithmetic iii}
For any modular form $f\in M_k(\Gamma_1(N))$ and automorphism $\sigma$ of the field $\CC$, we have
\[
\sigma(f|W_N) =  (\sigma(f)|W_N)|\ang{\chi_N(\sigma)},
\]
\end{romanenum}
\end{thm}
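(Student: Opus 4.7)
The plan is to prove the three parts in order, with parts (i) and (ii) as preparatory integrality statements and part (iii) as the main Galois-theoretic content.

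For part (ii), the strategy is to choose an explicit representative $\gamma \in \Gamma_0(N)$ of $\ang{d}$ and show that $f \mapsto f|\gamma$ preserves $q$-expansion integrality over any subring $B$ of $\CC$. This is most cleanly done via the moduli interpretation of $Y_1(N)$: the diamond operator $\ang{d}$ corresponds to the automorphism $(E,P) \mapsto (E, dP)$ of the $\Gamma_1(N)$-moduli problem, which is defined over $\ZZ$ and hence preserves the $\ZZ$-structure on $q$-expansions at $\infty$. Invertibility is immediate from $\ang{d} \ang{d^{-1}} = \Id$. The statement for a general $\Gamma$ between $\Gamma_1(N)$ and $\Gamma_0(N)$ follows automatically, since the $\ang{d}$'s commute and hence stabilize the subspace $M_k(\Gamma, B) \subseteq M_k(\Gamma_1(N), B)$.

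For part (i), the key identity, obtained by a direct matrix computation, is
\[
(f|W_N)(\tau) \;=\; N^k \cdot (f|\alpha)(N\tau), \qquad \alpha := \left(\begin{smallmatrix} 0 & -1 \\ 1 & 0 \end{smallmatrix}\right),
\]
so the Fourier coefficients of $f|W_N$ at $\infty$ are $N^k$ times the $q_N$-expansion coefficients of $f$ at the cusp $0$ of $\Gamma_1(N)$ (which has width $N$, as a short computation with $\alpha^{-1}\Gamma_1(N)\alpha$ shows). By the $q$-expansion principle at non-$\QQ$-rational cusps of $Y_1(N)$ (Deligne--Rapoport, Katz), these cusp-$0$ coefficients of $f \in M_k(\Gamma_1(N), \ZZ)$ lie in $\ZZ[\zeta_N]$. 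Thus the $\infty$-Fourier coefficients of $f|W_N$ lie in $N^k\, \ZZ[\zeta_N] \subseteq \ZZ[1/N, \zeta_N] \subseteq B$. Invertibility of $W_N$ on $M_k(\Gamma, B)$ then follows from the identity $W_N^2 = (-1)^k N^k \cdot \Id$ together with $N$ being a unit in $B$.

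For part (iii), the strategy is to apply $\sigma$ to the identity from part (i) and compare both sides coefficient-by-coefficient. If $f|\alpha$ has $q_N$-expansion $\sum b_n q_N^n$ with $b_n \in \ZZ[\zeta_N]$, then $\sigma(f|W_N)$ has Fourier coefficients $N^k \sigma(b_n)$, whereas $(\sigma(f)|W_N)|\ang{\chi_N(\sigma)}$ is computed from the $q_N$-expansion of $\sigma(f)$ at the cusp $0$ twisted by a diamond operator. The identity thus reduces to the statement that coefficient-wise application of $\sigma$ to the $q_N$-expansion of $f$ at the cusp $0$ agrees with the $q_N$-expansion of $\sigma(f)$ at the cusp $0$ up to the twist by $\ang{\chi_N(\sigma)}$. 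This reflects the Galois action on the non-$\QQ$-rational cusp $0$ of $X_1(N)$: its canonical $\Gamma_1(N)$-level structure involves $\zeta_N$, and $\sigma$ sends $\zeta_N$ to $\zeta_N^{\chi_N(\sigma)}$, which corresponds precisely to a diamond twist. The main obstacle is the careful comparison of Galois actions on the $q$-expansions at the two cusps $\infty$ and $0$; I anticipate handling this by an explicit description of the $\Gamma_1(N)$-level structures on the two Tate curve models, making rigorous the heuristic that evaluating at a cusp requiring an $N$-th root of unity twists the naive Galois action by $\ang{\chi_N(\sigma)}$.
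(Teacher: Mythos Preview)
Your approach is essentially the same as the paper's: all three parts ultimately rest on Katz's algebraic theory of modular forms and the moduli interpretation of $\Gamma_1(N)$-structures, with the paper phrasing part~(i) via the passage between $\Gamma_{00}(N)^{\operatorname{arith}}$ and $\Gamma_{00}(N)^{\operatorname{naive}}$ level structures (which is exactly your ``$q$-expansion at the cusp $0$'' reformulation) and handling part~(iii) by citing Ohta's Lemma~3.5.2, whose proof is precisely the Tate-curve comparison you outline.

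One small point to tighten in part~(i): as written you only treat $f \in M_k(\Gamma_1(N),\ZZ)$ and conclude $f|W_N$ has coefficients in $\ZZ[1/N,\zeta_N]\subseteq B$, but the statement is for arbitrary $f \in M_k(\Gamma,B)$. You should either invoke base change $M_k(\Gamma_1(N),B) = M_k(\Gamma_1(N),\ZZ[1/N]) \otimes_{\ZZ[1/N]} B$, or (as the paper does) run the $q$-expansion principle directly over $B$ from the outset, noting that the cusp-$0$ expansion of $f \in M_k(\Gamma_1(N),B)$ lands in $B[\zeta_N]=B$ since $\zeta_N \in B$ by hypothesis.
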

\begin{proof}

We first prove (\ref{T:AL arithmetic i}).  We will make use of Katz's algebraic theory of modular forms, cf.~Chapter II of \cite{MR506271}.  Almost everything we will require is summarized in \S3.6 of \cite{MR1332907}.   Fix a $\ZZ[1/N,\zeta_N]$-subalgebra $B$ of $\CC$.   With definitions as in \S2.1 of \cite{MR506271}, let $R^k(B,\Gamma_{00}(N)^{\operatorname{arith}})$ and $R^k(B,\Gamma_{00}(N)^{\operatorname{naive}})$ be the $B$-modules consisting of $\Gamma_{00}(N)^{\operatorname{arith}}$ and $\Gamma_{00}(N)^{\operatorname{naive}}$ modular forms, respectively, of weight $k$ defined over $B$.

We claim that $f|W_N \in M_k(\Gamma_1(N),B)$ for any modular form $f\in M_k(\Gamma_1(N),B)$.   Fix a modular form $f\in M_k(\Gamma_1(N),B)$.  Since $B\subseteq \CC$, associated to $f$ is a modular form $F\in R^k(B,\Gamma_{00}(N)^{\operatorname{arith}})$, cf.~\S2.4 of \cite{MR506271}.   The two modular forms $f$ and $F$ each have the notion of a $q$-expansion and they agree with each other.    Using the isomorphisms in (2.3.6) of \cite{MR506271}, we obtain from $F$ a modular form  $G\in R^k(B,\Gamma_{00}(N)^{\operatorname{naive}})$.   Since $B$ is a $\ZZ[1/N,\zeta_N]$-algebra, we have a unique isomorphism $\ZZ/N\ZZ \xrightarrow{\sim} \mu_N$ of group schemes over $\Spec B$ satisfying $1\mapsto \zeta_N$.   Using this isomorphism $\ZZ/N\ZZ \cong \mu_N$, we can view $G$ as a modular form in $R^k(B,\Gamma_{00}(N)^{\operatorname{arith}})$.   Associated to $G$, there is a classical weakly modular form $g$ on $\Gamma_1(N)$; \emph{weakly} meaning that it is meromorphic, and not necessarily holomorphic, at the cusps.   A straightforward computation shows that $f|W_N=g$; for example, see Lemma~3.6.5 of \cite{MR1332907} (note that $N^{-1}\cdot f|\tau$ in the notation of \cite{MR1332907} agrees with our $N^{-k} \cdot f|W_N$).  Since $G$ is defined over $B$, the $q$-expansion of $G$, and hence also of $g=f|W_N$, has coefficients in $B$.   This completes the proof of the claim.

The above claim shows that the function $\alpha\colon M_k(\Gamma,B) \to M_k(\Gamma,B)$, $f\mapsto f|W_N$ is well-defined; it is clearly a homomorphism of $B$-modules.   For any $f\in M_k(\Gamma,B)$, we have $\alpha(\alpha(f))=(f|W_N)|W_N = (-N)^k \cdot f$.  Since $N \in B^\times$, this proves that $\alpha$ is an isomorphism of $B$-modules.  This completes the proof of part (\ref{T:AL arithmetic i}).\\

Now fix a subring $B$ of $\CC$ and take any $d\in (\ZZ/N\ZZ)^\times$.    Take any modular form $f\in M_k(\Gamma,B)$.     We have $f|\ang{d} \in M_k(\Gamma)$.   As above, associated to $f$ is a modular form $F\in R^k(B,\Gamma_{00}(N)^{\operatorname{arith}})$.  
With notation as in Chapter II of \cite{MR1332907}, there is a modular form $F'\in R^k(B,\Gamma_{00}(N)^{\operatorname{arith}})$ that satisfies $F'(E,\omega,i)=F(E,\omega,di)$ for all $\Gamma_{00}(N)^{\operatorname{arith}}$-test objects $(E,\omega,i)$.  There is a classical weakly modular form $f'$ on $\Gamma_1(N)$ that corresponds to $F'$ and the coefficients of its $q$-expansions all lie in $B$.   It is straightforward to show that $f'=f|\ang{d}$; this is equation (3.6.7) of \cite{MR1332907}.  So the $q$-expansion of $f|\ang{d}$ has coefficients in $B$ and thus $f|\ang{d} \in M_k(\Gamma,B)$.   Therefore, the map
\[
\alpha_d \colon M_k(\Gamma,B) \to M_k(\Gamma,B), \quad f\mapsto f|\ang{d}
\]
is well-defined; it is clearly a homomorphism of $B$-modules.  The map $\alpha_d$ is invertible and  its inverse is $\alpha_{d^{-1}}$.\\

Part (\ref{T:AL arithmetic iii}) follows from Lemma~3.5.2 of \cite{MR1332907}.  Note that this lemma is stated for cusp forms, but this assumption is not used in the proof.  The lemma is also stated for modular forms with coefficients in an algebraic closure $\Qbar$ of $\QQ$, say in $\CC$; this is not a problem since  $M_k(\Gamma_1(N))$ has a basis consisting of modular forms with algebraic Fourier coefficients.
\end{proof}

\begin{cor}  \label{C:algebraic WN Gamma0}
Fix a number field $K\subseteq \CC$ and a modular form $f\in M_k(\Gamma_0(N),K)$.  Then $f|W_N$ also lies in $M_k(\Gamma_0(N),K)$.
\end{cor}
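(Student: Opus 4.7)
The plan is to combine all three parts of Theorem~\ref{T:AL arithmetic} to show first that the Fourier coefficients of $f|W_N$ are algebraic, and then that they are fixed by every automorphism of $\CC$ fixing $K$, which is enough to conclude they lie in $K$.

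First, I would enlarge $K$ to the ring $B := K(\zeta_N)[1/N]$, which is a $\ZZ[1/N,\zeta_N]$-subalgebra of $\CC$ containing all Fourier coefficients of $f$. Applying Theorem~\ref{T:AL arithmetic}\,(\ref{T:AL arithmetic i}) with this $B$ and $\Gamma=\Gamma_0(N)$ gives $f|W_N\in M_k(\Gamma_0(N),B)$; in particular, every Fourier coefficient of $f|W_N$ is algebraic.

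Next, take any $\sigma\in\Aut(\CC/K)$. Since $f$ has coefficients in $K$, we have $\sigma(f)=f$. Theorem~\ref{T:AL arithmetic}\,(\ref{T:AL arithmetic iii}) then gives
\[
\sigma(f|W_N) \;=\; (\sigma(f)|W_N)|\ang{\chi_N(\sigma)} \;=\; (f|W_N)|\ang{\chi_N(\sigma)}.
\]
Because $f|W_N\in M_k(\Gamma_0(N))$ (the matrix $\left(\begin{smallmatrix}0 & -1 \\ N & 0\end{smallmatrix}\right)$ normalizes $\Gamma_0(N)$), and every matrix representing $\ang{d}$ already lies in $\Gamma_0(N)$, the diamond operators act trivially on $M_k(\Gamma_0(N))$. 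Hence $(f|W_N)|\ang{\chi_N(\sigma)} = f|W_N$, so $\sigma(f|W_N)=f|W_N$.

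Finally, since the Fourier coefficients of $f|W_N$ are algebraic and every $\sigma\in\Aut(\CC/K)$ fixes them, standard Galois theory (the fixed field of $\Aut(\CC/K)$ inside $\overline{K}$ is $K$) implies they all lie in $K$. This gives $f|W_N\in M_k(\Gamma_0(N),K)$. There is no real obstacle here — the whole argument is a bookkeeping combination of the three parts of Theorem~\ref{T:AL arithmetic}, with the key observation being that the diamond-operator twist appearing in part~(\ref{T:AL arithmetic iii}) is harmless precisely because we restricted to $\Gamma_0(N)$.
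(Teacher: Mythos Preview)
Your proof is correct and follows essentially the same route as the paper's: both use Theorem~\ref{T:AL arithmetic}(\ref{T:AL arithmetic iii}) together with the observation that diamond operators act trivially on $M_k(\Gamma_0(N))$ to show every $\sigma\in\Aut(\CC/K)$ fixes $f|W_N$. The only difference is cosmetic: the paper skips your preliminary algebraicity step and directly uses that $\CC^{\Aut(\CC/K)}=K$, whereas you first pin the coefficients down to the number field $K(\zeta_N)$ via part~(\ref{T:AL arithmetic i}) so that the more elementary fact $\bar K^{\Gal(\bar K/K)}=K$ suffices.
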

\begin{proof}
We have $f|W_N$ in $M_k(\Gamma_0(N))$.   Take any automorphism $\sigma$ of $\CC$ that fixes $K$.  We have $\sigma(f)=f$ since $f$ has coefficients in $K$.   By Theorem~\ref{T:AL arithmetic}(\ref{T:AL arithmetic iii}), we have  $\sigma(f|W_N)= (f|W_N)|\ang{\chi_N(\sigma)}=f|W_N$.  Since $\sigma$ was an arbitrary automorphism of $\CC$ that fixes $K$, we deduce that $f|W_N$ has coefficients in $K$.  
\end{proof}

\begin{lemma} \label{L:W and d interaction}
We have $(f|\ang{d})|W_N= (f|W_N)|\ang{d^{-1}}$ for all $f\in M_k(\Gamma_1(N))$ and $d\in  (\ZZ/N\ZZ)^\times$.
\end{lemma}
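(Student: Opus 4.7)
The plan is to pass from the Atkin--Lehner and diamond operators to explicit $\SL_2(\ZZ)$ matrix representatives, then reduce the statement to an elementary matrix identity. Set $w_N := \left(\begin{smallmatrix} 0 & -1 \\ N & 0 \end{smallmatrix}\right)$ so that $f|W_N = N^{k/2}\,f|w_N$, and fix any $\gamma = \left(\begin{smallmatrix} a & b \\ c & e \end{smallmatrix}\right) \in \SL_2(\ZZ)$ with $\gamma \equiv \left(\begin{smallmatrix} d^{-1} & * \\ 0 & d \end{smallmatrix}\right) \pmod N$; then $f|\ang{d} = f|\gamma$.

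Next I would define $\gamma' := w_N\gamma w_N^{-1}$ and verify that it represents $\ang{d^{-1}}$. Direct multiplication gives
\[
\gamma' = \begin{pmatrix} e & -c/N \\ -Nb & a \end{pmatrix}.
\]
The congruence $N \mid c$ ensures that $\gamma' \in M_2(\ZZ)$, and $\det\gamma' = \det\gamma = 1$, so $\gamma' \in \SL_2(\ZZ)$. Reducing modulo $N$ yields $\left(\begin{smallmatrix} d & * \\ 0 & d^{-1} \end{smallmatrix}\right)$, so $\gamma'$ is a valid representative of $\ang{d^{-1}}$ and $f|\ang{d^{-1}} = f|\gamma'$ for every $f \in M_k(\Gamma_1(N))$.

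The cocycle property $f|(AB) = (f|A)|B$ then gives
\[
(f|\ang{d})|W_N = N^{k/2}\,f|(\gamma w_N) \quad \text{and} \quad (f|W_N)|\ang{d^{-1}} = N^{k/2}\,f|(w_N\gamma'),
\]
so the lemma reduces to the matrix identity $\gamma w_N = w_N\gamma'$. This is immediate from the definition of $\gamma'$ together with the fact that $w_N^2 = -N\cdot I$ is a scalar (hence central in $\GL_2(\RR)$): indeed $w_N\gamma' = w_N^2\gamma w_N^{-1} = \gamma\cdot(w_N^2 w_N^{-1}) = \gamma w_N$. The only genuine check in the whole argument is the integrality of $\gamma'$; beyond that, it is all bookkeeping.
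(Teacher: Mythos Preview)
Your proof is correct and follows essentially the same approach as the paper: both arguments pick a matrix representative for one diamond operator, conjugate by $w_N$, check that the conjugate is an integral $\SL_2(\ZZ)$-matrix representing the inverse diamond operator, and conclude via the cocycle identity $f|(\gamma\gamma')=(f|\gamma)|\gamma'$. The only cosmetic difference is that the paper starts with a representative of $\ang{d^{-1}}$ rather than $\ang{d}$, and you spell out the scalar-centrality of $w_N^2=-NI$ a bit more explicitly.
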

\begin{proof}
Take any $f\in M_k(\Gamma_1(N))$ and $d\in  (\ZZ/N\ZZ)^\times$. Choose a matrix $\gamma \in \SL_2(\ZZ)$ that is congruent to $\left(\begin{smallmatrix}d & 0 \\0 & d^{-1}\end{smallmatrix}\right)$ modulo $N$.   One can check that $\gamma':=\left(\begin{smallmatrix}0 & -1 \\N & 0\end{smallmatrix}\right)   \gamma  \left(\begin{smallmatrix}0 & -1 \\N & 0\end{smallmatrix}\right)^{-1}$  is in $\SL_2(\ZZ)$ and is congruent to $\left(\begin{smallmatrix}d^{-1} & 0 \\0 & d\end{smallmatrix}\right)$ modulo $N$.  Therefore,
\[
(f|W_N)|\ang{d^{-1}}= N^{k/2}\cdot (f|\left(\begin{smallmatrix}0 & -1 \\N & 0\end{smallmatrix}\right))|\gamma = N^{k/2}\cdot  (f| \gamma')|\left(\begin{smallmatrix}0 & -1 \\N & 0\end{smallmatrix}\right) = (f|\ang{d})|W_N. \qedhere
\]
\end{proof}

\section{Integrality of coefficients}

Fix positive integers $k$ and $N$.   For a modular form $f\in M_k(\Gamma_1(N))$ whose Fourier coefficients are algebraic integers, the coefficients of the modular form $f|W_N$ are algebraic but need not be integral.   

 The goal of this section is to show that the coefficients of $f|W_N$ times an explicit positive integer are all algebraic integers.  Define the integers
\[
B_{k,N}:=\prod_{p|N} p^{\lceil k/(p-1) \rceil} \quad \text{ and } \quad  C_{k,N}:=\prod_{p|N} p^{\lfloor k/(p-1) \rfloor},
\]
where $\lceil x \rceil$ and $\lfloor x \rfloor$ are the values of $x$ rounded up and down, respectively, to the nearest integer.  Note that $C_{k,N}$ divides the integer $C_k:=\prod_{p\leq k+1} p^{\lfloor k/(p-1) \rfloor}$ which depends only on $k$.  

\begin{thm} \label{T:integrality}
\begin{romanenum}
\item \label{T:integrality i}
If $f\in M_k(\Gamma_1(N))$ is a modular form whose Fourier coefficients are algebraic integers, then the coefficients of $B_{k,N}\cdot f|W_N$ are also algebraic integers.
\item \label{T:integrality ii}
If $f\in M_k(\Gamma_0(N))$ is a modular form whose Fourier coefficients are algebraic integers, then the coefficients of $C_{k,N}\cdot f|W_N$ are also algebraic integers.
\end{romanenum}
\end{thm}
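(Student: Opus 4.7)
The plan is to reduce the integrality claim to a local question at each prime $p$ dividing $N$. For primes $p \nmid N$, Theorem~\ref{T:AL arithmetic}(\ref{T:AL arithmetic i}) applied with $B = \ZZ_{(p)}[\zeta_N]$ already gives $p$-integrality of $f\vert W_N$, so no prime-to-$N$ factor needs to appear in the clearing integer. Fix a prime $p \mid N$ and write $N = p^a M$ with $\gcd(p,M) = 1$. I will show that multiplying by $p^{\lceil k/(p-1)\rceil}$ (resp.\ $p^{\lfloor k/(p-1)\rfloor}$ in the $\Gamma_0$ case) suffices to clear all $p$-adic denominators from the coefficients of $f\vert W_N$. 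Taking the product over all primes $p \mid N$ then gives $B_{k,N}$ and $C_{k,N}$ respectively.

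For the local analysis at $p$, I would follow the algebraic-modular-forms approach used in the proof of Theorem~\ref{T:AL arithmetic}(\ref{T:AL arithmetic i}), but now with base ring $\ZZ_p[\zeta_N]$ in place of $\ZZ[1/N,\zeta_N]$. Since $p$ is not inverted, the smooth Katz model must be replaced by the Deligne--Rapoport / Katz--Mazur integral model of $X_1(N)$ over $\ZZ_p[\zeta_N]$; the operator $W_N$ is still a well-defined endomorphism of the sheaf of weight-$k$ forms on the generic fiber, but may acquire poles along the supersingular locus in characteristic $p$. The bound on these poles comes from the Hasse invariant $A$, a modular form of weight $p-1$ over $\FF_p$ whose zero divisor is exactly the supersingular locus: a section of $\omega^{\otimes k}$ that acquires any pole along this locus after applying $W_N$ becomes regular after multiplication by $A^m$ with $m = \lceil k/(p-1)\rceil$, and lifting to characteristic zero (via $E_{p-1}$ for $p\geq 5$, with minor modifications for $p=2,3$) shows that $p^{\lceil k/(p-1)\rceil}\cdot(f\vert W_N)$ has $p$-integral coefficients. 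The refinement to $\lfloor k/(p-1)\rfloor$ in part (\ref{T:integrality ii}) uses Corollary~\ref{C:algebraic WN Gamma0}: since $f\vert W_N$ lies in $M_k(\Gamma_0(N),K)$ the diamond operators act trivially on it, which saves exactly one power of $p$ in the denominator when $(p-1)\nmid k$.

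The main obstacle is making the pole-order bound precise at the supersingular points of the integral model of $X_1(N)$ over $\ZZ_p$. An alternative, more explicit route that avoids the full Deligne--Rapoport machinery would be to expand $f$ in a basis of $M_k(\Gamma_1(N),\ZZ)$ whose $W_N$-action is known in closed form (for instance, products of Eisenstein series, or a combination of oldforms and newforms) and to bound denominators via the $p$-adic valuations of the relevant Gauss sums $g(\chi)=\sum_a \chi(a)\zeta_N^a$. In this approach, the bound $\lceil k/(p-1)\rceil$ emerges from combining the ramification index $\varphi(p^a) = p^{a-1}(p-1)$ of $\ZZ_p[\zeta_{p^a}]/\ZZ_p$ with the weight-$k$ automorphy factor, and the gain in part (\ref{T:integrality ii}) again reflects the triviality of the nebentypus.
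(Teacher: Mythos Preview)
Your proposal is a strategy outline rather than a proof, and you acknowledge its principal gap yourself: the Hasse-invariant pole-order bound on the integral model of $X_1(N)$ over $\ZZ_p[\zeta_N]$ is exactly the hard part, and you have not carried it out. The alternative Eisenstein/Gauss-sum route is likewise only sketched. So as written the proposal does not establish the theorem.

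The paper takes a genuinely different and more elementary path. Rather than redoing any integral-model analysis at level $N$, it reduces \emph{everything} to the single case $\Gamma_0(p)$, $K=\QQ$, which is Deligne--Rapoport's Proposition~VII.3.20. The reductions are explicit: from $\Gamma_0(p)$ over $\QQ$ to $\Gamma_0(p)$ over $K$ by expanding in a $\ZZ$-basis; from $\Gamma_0(p)$ to $\Gamma_0(p^{r+1})$ by forming the monic polynomial $\prod_{j=0}^{p^r-1}(x-\pi\cdot p^{k/2}(g|T^j)|\left(\begin{smallmatrix}0&-1\\p&0\end{smallmatrix}\right))$ and using that $B\brak{q}$ is integrally closed; from prime-power level to composite $N$ (still $\Gamma_0$) by taking the product over coset representatives of $\Gamma_0(M)$ in $\SL_2(\ZZ)$ chosen congruent to $I$ mod $p^r$; and finally from $\Gamma_0(N)$ to $\Gamma_1(N)$ by taking the product $\prod_d f|\langle d\rangle$. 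Each step preserves the key valuation inequality $v_\p(f|W_N)\geq -k/(p-1)$. What this buys is that no new geometry is needed beyond the $\Gamma_0(p)$ case already in the literature; your approach, if completed, would give a more conceptual explanation but would require substantially more input from Katz--Mazur.

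Your explanation for the floor-versus-ceiling improvement in part~(\ref{T:integrality ii}) is not quite right. The saving does trace back to the triviality of the diamond action, but the mechanism is: trivial nebentypus implies (via Corollary~\ref{C:algebraic WN Gamma0}) that $f|W_N$ has coefficients in $\QQ$ rather than $\QQ(\zeta_N)$, hence $v_p(f|W_N)\in\ZZ$, hence the strict inequality $v_p(C_{k,N}\cdot f|W_N)>-1$ forces $\geq 0$. It is this integrality of the valuation, not a direct ``one power saved'' from the diamond operators, that replaces $\lceil k/(p-1)\rceil$ by $\lfloor k/(p-1)\rfloor$.
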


\subsection{Valuations}

Take any number field $K\subseteq \CC$ that contains $\zeta_N$.  For a non-zero prime ideal $\p$ of $\OO_K$, let $v_\p\colon K^\times \to \QQ$ be the valuation corresponding to $\p$ normalized so that $v_\p(p)=1$, where $p$ is the rational prime divisible by $\p$.  We set $v_\p(0)=+\infty$.   For each modular form $f \in M_k(\Gamma(N),K)$, define 
\[
v_\p(f) = \inf_{n \geq 0} v_\p(a_n(f)),
\]
where $f$ has $q$-expansion $\sum_{n=0}^\infty   a_n(f) q_N^{n}$.  Note that $v_\p(f)\neq -\infty$ since the $q$-expansion of $f$ actually lies in $K\otimes_\ZZ \ZZ\brak{q_N} \subseteq K\brak{q_N}$.    Similarly, we can define $v_\p(f)$ for any power series $f\in K\otimes_\ZZ \ZZ\brak{q_N}$.    For a modular form $f\in M_k(\Gamma_1(N),K)$, we have $f|W_N \in M_k(\Gamma_1(N),K)$ by Theorem~\ref{T:AL arithmetic}(\ref{T:AL arithmetic i}).

\begin{lemma} \label{L:easy valuation}
Let $\p$ be a non-zero prime ideal of $\OO_K$ that does not divide $N$.  Then $v_\p(f|W_N)=v_\p(f)$ for all $f\in M_k(\Gamma_1(N),K)$.
\end{lemma}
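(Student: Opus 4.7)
The plan is to deduce the lemma directly from Theorem~\ref{T:AL arithmetic}(\ref{T:AL arithmetic i}) applied to a well-chosen $\ZZ[1/N,\zeta_N]$-subalgebra of $\CC$, together with the formula $(f|W_N)|W_N = (-1)^k N^k f$.

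First I would introduce the localization $B := \OO_{K,\p} = \{x\in K : v_\p(x)\geq 0\}$. The hypothesis $\p\nmid N$ says that $N$ is a unit in $B$, and by assumption $K\supseteq \QQ(\zeta_N)$, so $\zeta_N\in B$. Thus $B$ is a $\ZZ[1/N,\zeta_N]$-subalgebra of $\CC$, and Theorem~\ref{T:AL arithmetic}(\ref{T:AL arithmetic i}) applies: $W_N$ induces a $B$-linear automorphism of $M_k(\Gamma_1(N),B)$.

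Next I would translate this into a valuation inequality. For a modular form $h\in M_k(\Gamma_1(N),K)$, having coefficients in $B$ is equivalent to $v_\p(h)\geq 0$; more generally, for any integer $r$, choosing a nonzero element $c\in K$ with $v_\p(c)=-r$ we have $v_\p(h)\geq r$ iff $c h \in M_k(\Gamma_1(N),B)$. Applying this with $h=f$ and using $B$-linearity, $(cf)|W_N = c(f|W_N)$, so the preservation of $M_k(\Gamma_1(N),B)$ under $W_N$ gives $v_\p(f|W_N)\geq v_\p(f)$.

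Finally I would get the reverse inequality by applying the same argument to $f|W_N$ in place of $f$, obtaining $v_\p((f|W_N)|W_N)\geq v_\p(f|W_N)$. Since $(f|W_N)|W_N=(-1)^kN^k f$ and $v_\p(N)=0$, the left-hand side equals $v_\p(f)$, yielding $v_\p(f)\geq v_\p(f|W_N)$ and hence equality. There is no real obstacle here; the only thing to check carefully is that the $B$-module $M_k(\Gamma_1(N),B)$ in Theorem~\ref{T:AL arithmetic}(\ref{T:AL arithmetic i}) is literally the set of forms with coefficients in $B$, which is its definition, so the argument goes through cleanly.
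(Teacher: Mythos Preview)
Your proof is correct and follows essentially the same approach as the paper: localize at $\p$ to get a $\ZZ[1/N,\zeta_N]$-subalgebra $B$, use Theorem~\ref{T:AL arithmetic}(\ref{T:AL arithmetic i}) to obtain $v_\p(f|W_N)\geq v_\p(f)$ after scaling, and then apply the same inequality to $f|W_N$ together with $(f|W_N)|W_N=(-1)^kN^k f$ and $v_\p(N)=0$ for the reverse direction. One cosmetic point: with the paper's normalization $v_\p(p)=1$, the value $v_\p(f)$ lies in $\tfrac{1}{e}\ZZ$ rather than $\ZZ$, so your scaling element $c$ should be chosen with $v_\p(c)=-v_\p(f)$ rather than an ``integer $r$''; this does not affect the argument.
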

\begin{proof}
We claim that $v_\p(f|W_N)\geq v_\p(f)$ holds for any non-zero $f\in M_k(\Gamma_1(N),K)$.   After scaling $f$ by an appropriate non-zero element of $K$, we may assume without loss of generality that $v_\p(f)=0$.  So $f\in M_k(\Gamma_1(N),B)$, where $B$ is the subring of $K$ consisting of $x\in K$ satisfying $v_\p(x)\geq 0$.  Since $\p\nmid N$, we find that $B$ is a $\ZZ[1/N,\zeta_N]$-subalgebra of $\CC$.  By Theorem~\ref{T:AL arithmetic}(\ref{T:AL arithmetic i}), we deduce that $f|W_N$ also has coefficients in $B$ and hence $v_\p(f|W)\geq 0$.   This proves the claim.   

We now prove the lemma.   We may assume that $f$ is non-zero since otherwise the lemma is trivial.  Applying the claim to $f|W_N$ gives $v_\p((f|W_N)|W_N) \geq v_\p(f|W_N)$.   Since $(f|W_N)|W_N = \pm N^k f$ and $\p\nmid N$, this implies that $v_\p(f) \geq v_\p(f|W_N)$.  This proves the lemma since the claim gives the other inequality $v_\p(f|W_N)\geq v_\p(f)$.
\end{proof}

Note that the $\p$-adic valuations of $f$ and $f|W_N$ need not agree for primes $\p$ dividing $N$.   The following theorem bounds the difference between these two valuations.

\begin{thm} \label{T:DR generalization}
Take any prime $p$ that divides $N$ and any prime ideal $\p$ of $\OO_K$ that divides $p$.   Then 
\[
\big| v_\p(f|W_N)-v_\p(f)- k/2\cdot v_p(N) \big| \leq \frac{k}{2} v_p(N) + \frac{k}{p-1}
\]
for any non-zero $f\in M_k(\Gamma_1(N),K)$.
\end{thm}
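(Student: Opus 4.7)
The two-sided inequality is equivalent to the pair of bounds $v_\p(f|W_N) \ge v_\p(f) - k/(p-1)$ and $v_\p(f|W_N) \le v_\p(f) + k\,v_p(N) + k/(p-1)$. By the involution identity $(f|W_N)|W_N = (-1)^k N^k f$, the upper bound follows from the lower bound applied to $f|W_N$, since then $v_\p(f) + k\,v_p(N) = v_\p((f|W_N)|W_N) \ge v_\p(f|W_N) - k/(p-1)$. So it suffices to prove the lower bound; after rescaling $f$ by a power of a uniformizer, we reduce to showing: if $v_\p(f) = 0$, then $v_\p(f|W_N) \ge -k/(p-1)$.

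A direct manipulation of the formula $(f|W_N)(\tau) = \tau^{-k} f(-1/(N\tau))$ using $f(-1/u) = u^k (f|S)(u)$ for $S = \bigl(\begin{smallmatrix}0 & -1 \\ 1 & 0\end{smallmatrix}\bigr)$ yields $f|W_N(\tau) = N^k \cdot (f|S)(N\tau)$. The cusp $0$ has width $N$ on $\Gamma_1(N)$, so $f|S(\tau) = \sum c_n q_N^n$ is the Fourier expansion of $f$ at the cusp $0$, and $(f|S)(N\tau) = \sum c_n q^n$. Therefore $v_\p(f|W_N) = k\,v_p(N) + v_\p(f|S)$, and the desired inequality becomes equivalent to the bound
\[
v_\p(f|S) \;\ge\; -k\,v_p(N) - k/(p-1)
\]
on the $\p$-adic valuation of the $q$-expansion of $f$ at the cusp $0$.

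The principal obstacle is establishing this last bound. The plan is to interpret $f$ as a section of $\omega^k$ on Deligne--Rapoport's integral model of $X_1(N)$ over $\ZZ_p[\zeta_N]$, which is necessary because $p \mid N$ prevents direct use of the Katz framework behind Theorem~\ref{T:AL arithmetic}. The $p$-integrality of $f$'s $q$-expansion at $\infty$ extends $f$ to a regular section in a formal neighborhood of the cusp $\infty$; controlling the section at the cusp $0$ requires separately analyzing the multiplicity of $0$ in the characteristic-$p$ fiber (this contributes the $k\,v_p(N)$ term, arising concretely from the $N^k$ factor in $f|W_N = N^k (f|S)(N\tau)$) and the maximal pole order at the supersingular locus (contributing the $k/(p-1)$ term). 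The Eisenstein series $E_{p-1}$ --- a lift of the Hasse invariant with $p$-integral coefficients satisfying $E_{p-1} \equiv 1 \pmod p$ by the Clausen--von Staudt congruence --- is the key technical tool for trading pole order at supersingular points for additional weight, and yields $k/(p-1)$ as the optimal bound. Minor adjustments are needed at $p \in \{2,3\}$, where $E_{p-1}$ is replaced by a higher-weight substitute that is still a $p$-adic unit reducing to a power of the Hasse invariant.
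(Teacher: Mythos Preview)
Your first two paragraphs are correct and match the paper exactly: the reduction of the two-sided bound to the one-sided inequality $v_\p(f|W_N)\ge v_\p(f)-k/(p-1)$ via the involution identity, the normalization $v_\p(f)=0$, and the rewriting $v_\p(f|W_N)=k\,v_p(N)+v_\p(f|S)$ all appear in the paper's proof (the last one in Case~4).

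The gap is in your third paragraph. What you have written there is not a proof but a program: you propose to extend $f$ to a section of $\omega^k$ on a Deligne--Rapoport model of $X_1(N)$ over $\ZZ_p[\zeta_N]$ and then read off the two contributions from the component structure and the supersingular locus. None of this is carried out. For $p^r\|N$ with $r\ge 2$ the special fiber of $X_1(N)$ has several components with nontrivial multiplicities, the cusps distribute among them in a way that depends on $r$, and making precise the sentence ``the multiplicity of $0$ in the characteristic-$p$ fiber contributes $k\,v_p(N)$'' already requires real work. Your Hasse-invariant heuristic for the $k/(p-1)$ term is plausible at level $p$ (this is essentially the content of the Deligne--Rapoport proposition you would be reproving), but you give no argument for why it survives at higher level and on $\Gamma_1$.

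The paper avoids all of this by an elementary reduction to the single known case $\Gamma_0(p)$ (Deligne--Rapoport, VII~Prop.~3.20). It proceeds in stages: from $\Gamma_0(p)$ over $\QQ$ to $\Gamma_0(p)$ over $K$ by choosing an integral basis; from $\Gamma_0(p)$ to $\Gamma_0(p^{r+1})$ by forming the monic polynomial $\prod_{j=0}^{p^r-1}(x-g|T^j)$ whose coefficients live on $\Gamma_0(p)$, applying $W_p$ coefficientwise, and using that $B\brak{q}$ is integrally closed; from $\Gamma_0(p^r)$ to $\Gamma_0(N)$ by taking the product of $f|A$ over coset representatives $A$ of $\Gamma_0(N/p^r)$ in $\SL_2(\ZZ)$ chosen congruent to $I$ mod $p^r$; and finally from $\Gamma_0(N)$ to $\Gamma_1(N)$ by taking the product of $f|\ang{d}$ over $d\in(\ZZ/N\ZZ)^\times$. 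Each step is a norm-type construction that multiplies the weight but preserves the quantity $v_\p(\,\cdot\,|W)/\text{weight}$, so the bound transfers. This route is longer to state but uses nothing beyond the cited $\Gamma_0(p)$ result and basic commutative algebra, whereas your route, if completed, would amount to reproving and extending that result.
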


\begin{remark}
In the special case where $f\in M_k(\Gamma_0(p),\QQ)$, Theorem~3.3 was proved by Deligne and Rapoport, cf.~Proposition~3.20 in Chapter VII of \cite{MR0337993}.  We prove Theorem~\ref{T:DR generalization} by reducing to this special case.
\end{remark}

\subsection{Proof of Theorem~\ref{T:DR generalization}}
We claim that the inequality
\begin{align} \label{E:valuation difference}
v_\p(f|W_N)-v_\p(f)  \geq - \frac{k}{p-1}
\end{align}
holds for all non-zero $f\in M_k(\Gamma_1(N),K)$.  

Assume that the claim holds.   Take any non-zero $f\in M_k(\Gamma_1(N),K)$.  Applying (\ref{E:valuation difference}) to the modular function $f|W_N$ gives $v_\p((f|W_N)|W_N)-v_\p(f|W_N) \geq -\frac{k}{p-1}$.  Since $(f|W_N)|W_N=\pm N^k f$, we deduce that
\begin{align} \label{E:valuation difference 2}
v_\p(f|W_N)- v_\p(f)- k v_p(N) \leq \frac{k}{p-1}.
\end{align}
The theorem follows from the inequalities (\ref{E:valuation difference}) and (\ref{E:valuation difference 2}).\\

We will prove (\ref{E:valuation difference}) by considering various cases.  Take any non-zero $f\in M_k(\Gamma_1(N),K)$.  Note that there is no harm in scaling $f$ by a non-zero element of $K$ since the value $v_\p(f|W_N)-v_\p(f)$ will not change. In particular, we may assume that $v_\p(f)=0$ when desired.  

Different weights will arise in the proof, so we will add subscripts to slash and Atkin--Lehner operators to indicate the weight involved if it is not $k$.      Let $B$ be the subring of $K$ consisting of $x\in K$ satisfying $v_\p(x)\geq 0$.   For later, note that the power series ring $B\brak{q}$ is integrally closed since $B$ is a PID, cf.~\cite{MR1727221}*{Ch.~V \S4 Prop.~14}. \\

\noindent $\bullet$ \textbf{Case 1}:  \emph{Suppose that $N=p$ and that $f \in M_{k}(\Gamma_0(N),\QQ)$.  }

The claim in this case follows from Proposition~3.20 in Chapter VII of \cite{MR0337993}.\\

\noindent $\bullet$ \textbf{Case 2}:  \emph{Suppose that $N=p$ and that $f \in M_{k}(\Gamma_0(N),K)$.  }

After scaling $f$ by an appropriate non-zero element of $K$, we may assume that $f\in M_k(\Gamma_0(N),\OO_K)$ and $v_\p(f)=0$.     We have $M_k(\Gamma_0(N),\ZZ)\otimes_\ZZ \OO_K = M_k(\Gamma_0(N),\OO_K)$, cf.~section B.1.2 in Appendix B of \cite{MR3709060}.  So there are $f_1,\dots, f_d \in M_k(\Gamma_0(N),\ZZ)$ and $c_1,\ldots, c_d \in \OO_K$ such that $f=\sum_{i=1}^d c_i f_i$.  Therefore, $v_\p(f|W_N) \geq \min_i v_\p(f_i|W_N)$.  By Case 1, we have $v_\p(f_i|W_N) \geq v_\p(f_i) -k/(p-1)\geq -k/(p-1)$ for all $1\leq i \leq d$.    We deduce that $v_\p(f|W_N)\geq -k/(p-1)$. This proves the claimed inequality (\ref{E:valuation difference}) in this case.\\

\noindent $\bullet$  \textbf{Case 3}:  \emph{Suppose that $N=p^{r+1}$ for an integer $r\geq 1$ and that $f \in M_{k}(\Gamma_0(N),K)$.  }

After possibly replacing $K$ by a larger number field, we may assume without loss of generality that there is an element $\pi\in K$ satisfying $v_\p(\pi)=k/(p-1)$.  Define $g:= p^{r k/2}\cdot  f| \left(\begin{smallmatrix}p^r & 0 \\ 0 & 1\end{smallmatrix}\right)^{-1}$.  The function $g$ is a modular form on the congruence subgroup
\[
\Gamma :=  \left(\begin{smallmatrix}p^r & 0 \\ 0 & 1\end{smallmatrix}\right) \Gamma_0(p^{r+1})  \left(\begin{smallmatrix}p^r & 0 \\ 0 & 1\end{smallmatrix}\right)^{-1} = \Big\{ \left(\begin{smallmatrix}a & p^rb \\p c & d\end{smallmatrix}\right) : a,b,c,d \in \ZZ \text{ such that } ad-p^{r+1}bc=1 \Big\}.
\]
We have $g\in M_k(\Gamma,B)$ since $g(\tau)=f(\tau/p^r)$ and $v_\p(f)=0$.

  With the matrix $T=\left(\begin{smallmatrix}1 & 1 \\0 & 1\end{smallmatrix}\right)$, define the polynomial
\[
P(x) := \prod_{j=0}^{p^r-1} (x -  g| T^j).
\]
The $q$-expansion of $g|T^j$ has coefficients in $B$; they are obtained by scaling the coefficients of $g$ by suitable $N$-th roots of unity.   Therefore, $P(x)=\sum_{i=0}^{p^r}  b_i \cdot x^{p^r-i}$ with $b_i \in B\brak{q}$.
Since the matrices $\{T^j : 0\leq j\leq p^r-1\}$ represent the right cosets of $\Gamma$ in $\Gamma_0(p)$, we find that $b_i$ is a modular form for $\Gamma_0(p)$ of weight $ki$ and hence $b_i \in M_{ki}(\Gamma_0(p),B)$.  
By Case 2 applied to $b_i$, we have
\[
v_\p(b_i|_{ki} W_p) \geq - ki/(p-1)
\]
and hence $v_\p(\pi^i\cdot b_i|_{ki} W_p)\geq 0$.  Therefore, $\pi^i\cdot b_i|_{ki}W_p$ is an element of $B\brak{q}$.   

Now define the polynomial
\[
Q(x):=\prod_{j=0}^{p^r-1} (x - \pi \cdot p^{k/2} (g| T^j) |\left(\begin{smallmatrix}0 & -1 \\ p & 0\end{smallmatrix}\right))  = \sum_{i=0}^{p^r} \pi^i \cdot p^{ki/2} b_i|_{ki} \left(\begin{smallmatrix}0 & -1 \\ p & 0\end{smallmatrix}\right) \cdot x^{p^r-i}= \sum_{i=0}^{p^r} \pi^i  \cdot b_i|_{ki}W_p \cdot x^{p^r-i};
\]
it is a monic polynomial with coefficients in $B\brak{q}$.   We have
\begin{align*}
f|W_N &= N^{k/2} f|\left(\begin{smallmatrix}0 & -1 \\ N & 0\end{smallmatrix}\right) = N^{k/2} \cdot p^{-rk/2}\, g| \big( \left(\begin{smallmatrix}p^r & 0 \\ 0 & 1\end{smallmatrix}\right) \left(\begin{smallmatrix}0 & -1 \\ N & 0\end{smallmatrix}\right) \big) = p^{k/2} g| \left(\begin{smallmatrix}0 & -p^r \\ N & 0\end{smallmatrix}\right)  = p^{k/2} g| \left(\begin{smallmatrix}0 & -1 \\ p & 0\end{smallmatrix}\right)
\end{align*}  
and hence $\pi\cdot f|W_N$ is a root of $Q(x)$.      Since $B\brak{q}$ is integrally closed and since $\pi\cdot f|W_N$ is a root of $Q(x) \in (B\brak{q})[x]$ that lies in the fraction field of $B\brak{q}$, we deduce that $\pi\cdot f|W_N$ lies in $B\brak{q}$.  Therefore, $v_\p(\pi\cdot f|W_N)\geq 0$ and hence $v_\p(f|W_N) \geq - v_\p(\pi) = - k/(p-1)$. This proves the claimed inequality (\ref{E:valuation difference}) in this case.\\

\noindent $\bullet$ \textbf{Case 4}: \emph{Suppose that $f \in M_{k}(\Gamma_0(N),K)$.  }

After scaling $f$ by a non-zero element of $K$, we may assume that $v_\p(f)=0$.  Let $p^r$ be the largest power of $p$ that divides $N$.  Set $M:=N/p^r$.      Let $R \subseteq \SL_2(\ZZ)$ be a set of representatives of the right cosets of $\Gamma_0(M)$ in $\SL_2(\ZZ)$ chosen so that each $A\in R$ is congruent to the identity matrix modulo $p^r$.   Define
\[
g := \prod_{A\in R} f|A.
\]
The function $g$ is an element of $M_{km}(\Gamma_0(p^r),K)$ with $m:=|R|$.   We have $v_\p(f|A)=v_\p(f)$ for all $A\in R$ by \cite{MR0337993}*{VII~Corollaire~3.12} and our assumption that all $A\in R$ are congruent modulo $p^r$ to the identity matrix.   Therefore,  $v_\p(g)=\sum_{A\in R} v_\p(f|A) = m v_\p(f)$.    We have $v_\p(g)=m\cdot 0= 0$ since $v_\p(f)=0$, so $g$ is an element of $M_{k'}(\Gamma_0(p^r),B)$ with $k':=km$.   By Case 2 or Case 3 applied to $g$, we have
\begin{align} \label{E:pr version}
v_\p(g|_{k'} W_{p^r})  \geq - km/(p-1).
\end{align}

Define the matrix $S=\left(\begin{smallmatrix}0 & -1 \\1 & 0\end{smallmatrix}\right)$.  Take any $A\in R$.  Since $S^{-1}AS \in \SL_2(\ZZ)$ is congruent to the identity matrix modulo $p^r$, we have
\[
v_\p(f|S) = v_\p((f|S)|(S^{-1}AS))=v_\p( (f|A)|S )
\]
by \cite{MR0337993}*{VII~Corollaire~3.12}.  Therefore, $v_\p(g|_{k'} S)=\sum_{A\in R} v_\p((f|A)|S)= m\cdot v_\p(f|S)$.   We have
\[
f|W_N = N^{k/2} f | \left(\begin{smallmatrix}0 & -1 \\N & 0\end{smallmatrix}\right)
= N^{k/2} (f|S)| \left(S^{-1} \left(\begin{smallmatrix}0 & -1 \\N & 0\end{smallmatrix}\right)\right)= N^{k/2} (f|S)| \left(\begin{smallmatrix}N & 0 \\0 & 1\end{smallmatrix}\right).
\]
So $(f|W_N)(\tau)= N^k (f|S)(N\tau)$ and hence $v_\p(f|W_N)=k\cdot v_\p(N) + v_\p(f|S)$.   Similarly, $v_\p(g|_{k'} W_{p^r})= k'\cdot v_\p(p^r)+v_\p(g|_{k'} S)$.   Therefore, 
\begin{align*}
m \cdot v_\p( f|W_N)  = km v_\p(N) + m v_\p(f|S) =  k' v_\p(p^r) +  v_\p(g|_{k'} S) = v_\p(g|_{k'}W_{p^r}).
\end{align*}
By (\ref{E:pr version}), we deduce that $v_\p( f|W_N) \geq -k/(p-1)$.  This proves the claimed inequality (\ref{E:valuation difference}) in this case.\\

\noindent $\bullet$ \textbf{Case 5}:  \emph{General case.}

We may assume that $v_\p(f)=0$.  Take any $d\in (\ZZ/N\ZZ)^\times$.   By Theorem~\ref{T:AL arithmetic}(\ref{T:AL arithmetic ii}), the diamond operator $\ang{d}$ acts as an automorphism on $M_k(\Gamma_1(N),B)$.  This implies that $v_\p(h|\ang{d})=v_\p(h)$ for all $h\in M_k(\Gamma_1(N),K)$.  

Define
\[
g := \prod_{d\in (\ZZ/N\ZZ)^\times} f|\ang{d};
\]
it is a modular form on $\Gamma_0(N)$ of weight $k':=k \varphi(N)$.    Therefore, $v_\p(g)= \sum_{d} v_\p(f|\ang{d}) = \varphi(N) v_\p(f)=0$.  We have
\[
g|_{k'} W_N = \prod_{d\in (\ZZ/N\ZZ)^\times} (f|\ang{d})|W_N= \prod_{d\in (\ZZ/N\ZZ)^\times} (f|W_N)|\ang{d^{-1}},
\]
where the last equality uses Lemma~\ref{L:W and d interaction}.  Therefore, $v_\p(g|_{k'} W_N)=\sum_d v_\p((f|W_N)|\ang{d^{-1}}) = \varphi(N) v_\p(f|W_N)$ and hence
\[
v_\p(f|W_N) = \varphi(N)^{-1} \cdot v_\p(g|_{k'} W_N) \geq - \varphi(N)^{-1} \cdot k'/(p-1) = -k/(p-1),
\]
where the inequality uses Case 4 applied to $g$.   This completes the proof of the claimed inequality (\ref{E:valuation difference}).

\subsection{Proof of Theorem~\ref{T:integrality}} \label{SS:proof integrality}
Take any $f\in M_k(\Gamma_1(N))$ whose Fourier coefficients are algebraic integers.   There is  a number field $K \subseteq \CC$ that contains the coefficients of $f$ and the $N$-th root of unity $\zeta_N$.    We thus have $f\in M_k(\Gamma_1(N),\OO_K)$.  We have $f|W_N \in M_k(\Gamma_1(N),K)$ by Theorem~\ref{T:AL arithmetic}(\ref{T:AL arithmetic i}).   So to prove that $B_{k,N}\cdot f|W_N$ has coefficients in $\OO_K$, it suffices to show that $v_\p(B_{k,N}\cdot f|W_N) \geq 0$ for all non-zero primes $\p$ of $\OO_K$.   Take any non-zero prime ideal $\p\subseteq \OO_K$.   If $\p\nmid N$, then 
\[
v_\p(B_{k,N} \cdot f|W_N)=v_\p(f|W_N) = v_\p(f) \geq 0,
\] 
where we have used  Lemma~\ref{L:easy valuation} and that $f$ has coefficients in $\OO_K$.   

Now suppose that $\p$ divides $N$.  We have $v_\p(B_{k,N})=\lceil k/(p-1) \rceil$.  By Theorem~\ref{T:DR generalization} with $v_\p(f)\geq 0$, we have  $v_\p(f|W_N) \geq -k/(p-1)$.  Therefore, $v_\p(B_{k,N}\cdot f|W_N)  \geq \lceil k/(p-1) \rceil - k/(p-1) \geq 0$.  This completes the proof of part (\ref{T:integrality i}).\\

We now prove (\ref{T:integrality ii}).  Take any $f\in M_k(\Gamma_0(N))$ whose Fourier coefficients are algebraic integers.      Choose a number field $K \subseteq \CC$ for which $f\in M_k(\Gamma_0(N),\OO_K)$.    Without loss of generality, we may assume that $f\in M_k(\Gamma_0(N),\ZZ)$ since  $M_k(\Gamma_0(N),\OO_K)=M_k(\Gamma_0(N),\ZZ) \otimes_\ZZ \OO_K$, cf.~section B.1.2 in Appendix B of \cite{MR3709060}.   We have $f|W_N \in M_k(\Gamma_0(N),\QQ)$ by Corollary~\ref{C:algebraic WN Gamma0}.

Take any prime $p$.  If $p \nmid N$, then $v_p(C_{k,N}\cdot f|W_N) = v_p(f|W_N)=v_p(f) \geq 0$, where we have used  Lemma~\ref{L:easy valuation} and that $f$ has coefficients in $\ZZ$.  Now suppose that $p$ divides $N$.   By Theorem~\ref{T:DR generalization} and $v_p(f)\geq 0$, we have $v_p(f|W_N)\geq -k/(p-1)$.  Therefore,
\[
v_p(C_{k,N} \cdot f|W_N)=\lfloor k/(p-1) \rfloor + v_p(f|W_N) \geq \lfloor k/(p-1) \rfloor - k/(p-1) > -1.
\]
The coefficients of $C_{k,N}\cdot f|W_N$ lie in $\QQ$ so $v_p(C_{k,N}\cdot f|W_N)$ is an integer.   Therefore, $v_p(C_{k,N} \cdot f|W_N)\geq 0$ since $v_p(C_{k,N} \cdot f|W_N)$ is an integer strictly larger than $-1$.   We have $C_{f,N}\cdot f|W_N \in M_k(\Gamma_0(N),\ZZ)$ since its coefficients are rational and have non-negative valuation at all non-zero primes $p$. This proves (\ref{T:integrality ii}).

\section{Spaces of modular forms} \label{S:numerical}

Fix positive integers $k$ and $N$.   In this section, we verify that several subspaces $\calM$ of $M_k(\Gamma_1(N))$ satisfy  conditions (\ref{c:a})--(\ref{c:d}) of \S\ref{SS:setup intro}. In \S\ref{SS:numerical}, we explain how to numerically approximate the action of $W_N$ and diamond operators on $M_k(\Gamma_1(N))$.  

\subsection{Generators}

Recall that for a subspace $\calM\subseteq M_k(\Gamma_1(N))$ and a subring $R\subseteq \CC$, we defined $\calM(R)$ to be $\calM \cap M_k(\Gamma_1(N),R)$.

\begin{lemma} \label{L:finite index SNF}
Let $\{h_1,\ldots, h_r\}$ be a set that generates a finite index subgroup of $\calM(\ZZ)$.  Assume further that one can compute an arbitrary number of terms in the $q$-expansion of each $h_i$.     Then one can find a basis $f_1,\ldots, f_g$ of the $\ZZ$-module $\calM(\ZZ)$ for which one can compute an arbitrary number of terms in the $q$-expansion of each $f_i$.  
\end{lemma}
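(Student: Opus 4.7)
The plan is to reduce to a lattice computation in $\ZZ^n$ via truncated $q$-expansions, then use the Smith normal form to extract a basis of $\calM(\ZZ)$ by saturation. First I would fix $n$ large enough that truncation $f\mapsto (a_0(f),\ldots,a_{n-1}(f))$ is injective on $M_k(\Gamma_1(N))$; a Sturm-type bound suffices. Forming the matrix $A\in M_{r,n}(\ZZ)$ whose $i$-th row is the first $n$ Fourier coefficients of $h_i$, the assumption that $h_1,\ldots,h_r$ generate a subgroup $L\subseteq \calM(\ZZ)$ of full rank $g:=\dim_\CC \calM$ ensures that $A$ has rank $g$ once $n$ is sufficiently large; we certify this by incrementing $n$ until the rank stabilizes.

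Next I would compute the Smith normal form $A=UDV$ with $U\in \GL_r(\ZZ)$, $V\in \GL_n(\ZZ)$, and invariant factors $d_1\mid d_2\mid \cdots \mid d_g$ appearing on the diagonal of $D$. Writing $\vec{v}_i$ for the $i$-th row of $V$, one has $\pi_n(L)=\bigoplus_{i=1}^g d_i \vec{v}_i\, \ZZ$ and hence the saturation $\pi_n(L)^{\operatorname{sat}}:=(\pi_n(L)\otimes_\ZZ \QQ)\cap \ZZ^n = \bigoplus_{i=1}^g \vec{v}_i\, \ZZ$, each $\vec{v}_i$ being expressible as $\tfrac{1}{d_i}$ times an explicit integer combination of the rows of $A$ whose coefficients are read off from $U^{-1}$. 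Applying these same $\QQ$-linear combinations to $h_1,\ldots,h_r$ defines candidate forms $f_1,\ldots,f_g\in \calM(\QQ)$, whose $q$-expansions can be computed to arbitrary precision from those of the $h_j$.

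It remains to show that each $f_i$ actually lies in $\calM(\ZZ)$. By construction the first $n$ Fourier coefficients of $f_i$ are integers, so the crux is to enlarge $n$ so that integrality of the first $n$ coefficients of any $f\in M_k(\Gamma_1(N),\QQ)$ forces $f\in M_k(\Gamma_1(N),\ZZ)$; equivalently, the image of $M_k(\Gamma_1(N),\ZZ)$ in $\ZZ^n$ must be a saturated sublattice. This follows from Sturm's theorem applied modulo every prime: once $n$ exceeds the ($p$-independent) Sturm bound for $M_k(\Gamma_1(N))$, a $\ZZ$-basis of $M_k(\Gamma_1(N),\ZZ)$ remains linearly independent modulo every $p$ in its first $n$ coefficients, so all elementary divisors of the truncated image equal $1$. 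Granted this, $f_i\in \calM\cap M_k(\Gamma_1(N),\ZZ)=\calM(\ZZ)$, and since $f_1,\ldots,f_g$ span a saturated sublattice of $\calM(\ZZ)$ of rank $g$, they form a $\ZZ$-basis. The main technical hurdle is securing this uniform Sturm/integrality input; a pragmatic fallback, avoiding any a priori bound, is to iterate the SNF computation with increasing $n$ until two consecutive saturations agree.
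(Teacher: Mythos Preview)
Your argument is correct and rests on the same two ingredients as the paper's proof: Sturm's bound (to control the first $n$ coefficients) and the Smith normal form of the coefficient matrix. The difference is in execution. The paper first reduces to $r=g$ linearly independent $h_i$, computes the SNF, and if the first elementary divisor $b_1$ is divisible by some prime $p$, it uses Sturm mod $p$ to produce a single new element $f=\tfrac{1}{p}\sum c_i h_i\in\calM(\ZZ)$ enlarging $L$; it then repeats this loop until $b_1=1$, at which point $L=\calM(\ZZ)$. You instead saturate in one shot: from $A=UDV$ you form $f_i=\tfrac{1}{d_i}\sum_j (U^{-1})_{ij} h_j$ for all $i$ simultaneously, and then invoke the fact that $\pi_n\big(M_k(\Gamma_1(N),\ZZ)\big)$ is a saturated sublattice of $\ZZ^n$ (equivalently, Sturm mod every $p$) to conclude $f_i\in\calM(\ZZ)$. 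Your route is a bit more direct and avoids the outer loop; the paper's iterative version is perhaps closer to how one would code it without precomputing $U^{-1}$. Either way the essential content is the same. Your ``pragmatic fallback'' of increasing $n$ until stabilization would need a termination argument to be a proof, but you don't rely on it since the Sturm bound already gives an explicit sufficient $n$.
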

\begin{proof}
Let $L$ be the subgroup of $\calM(\ZZ)$ generated by $\{h_1,\ldots, h_r\}$.    Let $s$ be the largest integer for which $s\leq k/12\cdot [\SL_2(\ZZ):\Gamma_1(N)]$.  For each $f\in \calM$, we have a $q$-expansion $\sum_{i=0}^\infty a_i(f) q^i$.   For a prime $p$, Sturm's bound (Theorem~9.18 of \cite{MR2289048})  says that if $f\in \calM(\ZZ)$ satisfies $a_i(f) \equiv 0 \pmod{p}$ for all $i\leq s$, then $a_i(f)\equiv 0\pmod{p}$ for all $i$.    In particular, if $f\in \calM(\ZZ)$ satisfies $a_i(f) =0$ for all $i\leq s$, then $a_i(f)=0$ for all $i$.

Let $g$ be the rank of the $\ZZ$-module $\calM(\ZZ)$.  So by replacing $\{h_i\}$ by a suitable subset, we may assume that $r=g$ and that the modular forms $h_1,\ldots, h_g$ are a basis for $L$ (Sturm's bound ensures that we can check linear independence by only considering a finite number of terms of the $q$-expansions).\\

Let $A$ be the $g \times s$ matrix satisfying $A_{i,j}=a_{j-1}(h_i)$.  Since the $h_1,\ldots, h_g$ are linearly independent in $\calM(\ZZ)$, Sturm's bound implies that $A\in M_{g,s}(\ZZ)$ has rank $g$.     Recall that there are unique positive integers $b_1,\ldots, b_g$ satisfying $b_i | b_{i+1}$ for all $1\leq i <g$ such that there are matrices $U\in \GL_g(\ZZ)$ and $V\in \GL_s(\ZZ)$ with $(U A V)_{i,j}$ equal to $b_i$ when $i=j$ and $0$ otherwise.   This is the \emph{Smith normal form} of $A$ and is straightforward to compute.

First suppose that $b_1\neq 1$.  Choose a prime $p$ dividing $b_1$.   Then $A$ modulo $p$ has rank strictly less than $g$.   So there are $c_1,\ldots, c_g \in \ZZ$, not all divisible by $p$, such that $c_1 a_i(h_1)+ \cdots +  c_g a_i(h_g) \equiv 0 \pmod{p}$ for all $i\leq s$.   By Sturm's bound, all the coefficients of the $q$-expansion of $c_1 a_i(h_1)+ \cdots +  c_g a_i(h_g)$ are divisible by $p$.   So $f := c_1/p\cdot h_1+ \cdots +  c_g/p\cdot  h_g$ is an element of $\calM(\ZZ)$.  We have $f\notin L$ since not all of the $c_i$ are divisible by $p$.   Let $L'$ be the subgroup of $\calM(\ZZ)$ generated by $h_1,\ldots, h_g$ and $f$.   By replacing $L$ by $L'$ and choosing a new basis $h_1,\ldots, h_g$ of $L'$, we can repeat this process until $b_1 = 1$.

Now suppose that $b_1 = 1$.  We claim that $L= \calM(\ZZ)$ and hence $h_1,\ldots, h_g$ is a basis of $\calM(\ZZ)$.  Suppose on the contrary that $L\neq \calM(\ZZ)$ and hence there is a prime $p$ and a modular form $f\in \calM(\ZZ)$ such that $pf \in L$ and $f\notin L$.  We have $p f = c_1h_1+\cdots c_g h_g$ for unique $c_i\in \ZZ$.   If not all the $c_i$ are divisible by $p$, then we find that $A$ modulo $p$ has rank strictly less than $g$.  However, $A$ modulo $p$ has rank $g$ since $p\nmid b_1=1$.     This contradicts proves the claim.

Once we have found a basis of $\calM(\ZZ)$, the parts of the lemma concerning arbitrarily many terms is immediate.
\end{proof}

\begin{lemma} \label{L:M reduction}
Assume that $\calM = \bigoplus_{i=1}^m \calM_i \subseteq M_k(\Gamma_1(N))$, where each $\calM_i$ is a subspace of $M_k(\Gamma_1(N))$ satisfying conditions (\ref{c:a})--(\ref{c:d}) of \S\ref{SS:setup intro}.   Then $\calM$ also satisfies conditions (\ref{c:a})--(\ref{c:d}).
\end{lemma}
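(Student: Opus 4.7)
The plan is to verify conditions (\ref{c:a})--(\ref{c:d}) one at a time, with conditions (\ref{c:a}) and (\ref{c:b}) being immediate, condition (\ref{c:c}) following by a short lattice argument, and condition (\ref{c:d}) being reduced to Lemma~\ref{L:finite index SNF}.

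For (\ref{c:a}) and (\ref{c:b}), observe that since each $\calM_i$ is stable under $W_N$ and under every diamond operator $\ang{d}$ by hypothesis, so is the sum $\calM = \bigoplus_i \calM_i$. This is a one-line check and requires no further argument.

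For (\ref{c:c}), note that $\bigoplus_i \calM_i(\ZZ) \subseteq \calM(\ZZ)$, since a sum of modular forms with integer Fourier coefficients again has integer Fourier coefficients. By assumption, each $\calM_i(\ZZ)$ spans $\calM_i$ over $\CC$, so $\bigoplus_i \calM_i(\ZZ)$ spans $\bigoplus_i \calM_i = \calM$ over $\CC$, and hence so does the larger set $\calM(\ZZ)$.

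For (\ref{c:d}), which is the substantive step, let $f_{i,1},\ldots, f_{i,g_i}$ be a $\ZZ$-basis of $\calM_i(\ZZ)$ for each $i$, given by the condition (\ref{c:d}) hypothesis on $\calM_i$; in particular we can compute arbitrarily many terms of the $q$-expansion of each $f_{i,j}$. Let $L \subseteq \calM(\ZZ)$ be the $\ZZ$-submodule generated by the union $\{f_{i,j}\}$. The key observation is that $L$ has finite index in $\calM(\ZZ)$: indeed, $\calM(\ZZ)$ is a finitely generated torsion-free $\ZZ$-module (as a submodule of the finitely generated $\ZZ$-module $M_k(\Gamma_1(N),\ZZ)$), both $L$ and $\calM(\ZZ)$ have $\CC$-span equal to $\calM$ by step (\ref{c:c}), so they have the same $\ZZ$-rank, and a full-rank inclusion of lattices has finite index. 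With $\{f_{i,j}\}$ as input, Lemma~\ref{L:finite index SNF} then produces an actual $\ZZ$-basis $f_1,\ldots,f_g$ of $\calM(\ZZ)$ together with an algorithm to compute arbitrarily many terms of each of their $q$-expansions, which is precisely condition (\ref{c:d}).

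The only step with any real content is (\ref{c:d}), and even that is a direct appeal to Lemma~\ref{L:finite index SNF} once one observes that $\bigoplus_i \calM_i(\ZZ)$ is a finite-index sublattice of $\calM(\ZZ)$; the potential worry that integrality of a sum does not imply integrality of the summands is exactly what the lemma is designed to handle via Smith/Hermite normal form.
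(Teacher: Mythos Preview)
Your proof is correct and follows essentially the same approach as the paper's: conditions (\ref{c:a})--(\ref{c:c}) are immediate, and for (\ref{c:d}) one observes that $L=\bigoplus_i \calM_i(\ZZ)$ is a finite-index subgroup of $\calM(\ZZ)$ and then invokes Lemma~\ref{L:finite index SNF}. Your version is slightly more explicit in justifying why the inclusion $L\subseteq \calM(\ZZ)$ has finite index (via the equal-rank lattice argument and finite generation of $M_k(\Gamma_1(N),\ZZ)$), whereas the paper states this in one line.
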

\begin{proof}
Define $L:= \oplus_{i=1}^m \calM_i(\ZZ) \subseteq \calM(\ZZ)$.  Conditions (\ref{c:a})--(\ref{c:c}) for $\calM$ are immediate consequences of those from the $\calM_i$.    Since $L$ spans $\calM$, we find that $L$ is a finite index subgroup of $\calM(\ZZ)$.    Condition (\ref{c:d}) for $\calM$ follows from condition (\ref{c:d}) of the $\calM_i$ and Lemma~\ref{L:finite index SNF}.
\end{proof}

Let $\calM$ be a subspace of $M_k(\Gamma_1(N))$ that is stable under the $\Aut(\CC)$-action.   For a number field $L\subseteq \CC$ and modular form $f\in \calM(L)$, we define 
\[
\Tr_{L/\QQ}(f) = \sum_{\sigma\colon L \hookrightarrow \CC} \sigma(f),
\] 
where $\sigma$ varies over the field embeddings $L\hookrightarrow \CC$.    We have $\Tr_{L/\QQ}(f) \in \calM$ since $\calM$ is stable under the action of $\Aut(\CC)$.    The $q$-expansion of $\Tr_{L/\QQ}(f)$ is obtained from the $q$-expansion of $f$ by taking the trace of the coefficients and hence $\Tr_{L/\QQ}(f) \in \calM(\QQ)$.   If $f\in \calM(\OO_L)$, then $\Tr_{L/\QQ}(f) \in \calM(\ZZ)$.

The following lemma is useful for finding a set of modular forms that generate a finite index subgroup of $\calM(\ZZ)$.   Choosing a linear independent subset, one can then use Lemma~\ref{L:finite index SNF} to compute a basis of $\calM(\ZZ)$.

\begin{lemma} \label{L:gen calM}
Let $\calM$ be a subspace of $M_k(\Gamma_1(N))$ that is stable under the $\Aut(\CC)$-action.  Let $\{h_1,\ldots, h_s\}$ be a subset of $\calM$ that is not contained in any proper subspace of $\calM$ stable under the $\Aut(\CC)$-action.  Further assume that  for each $1\leq i \leq s$, we have $h_i\in \calM(\OO_{L_i})$ for a number field $L_i\subseteq \CC$. 

 For each $1\leq i \leq s$, choose an $a_i\in \OO_{L_i}$ such that $L=\QQ[a_i]$.  Then the set 
\begin{align} \label{E:gen set}
\big\{ \Tr_{L_i/\QQ}(a_i^{j-1} \, h_i ) : 1\leq i \leq s,\,  1\leq j \leq [L_i:\QQ] \big\}
\end{align}
spans $\calM$ and generates a finite index subgroup of $\calM(\ZZ)$.  
\end{lemma}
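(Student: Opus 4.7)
The plan is to first show that the set (\ref{E:gen set}) spans $\calM$ as a $\CC$-vector space, and then to deduce the finite-index claim via Galois descent. Throughout, let $V\subseteq\calM$ denote the $\CC$-span of (\ref{E:gen set}).

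Each element of (\ref{E:gen set}) lies in $\calM(\ZZ)$: since $a_i\in\OO_{L_i}$ and $h_i\in\calM(\OO_{L_i})$, we have $a_i^{j-1}h_i\in\calM(\OO_{L_i})$, so its trace lies in $\calM(\ZZ)$ as noted just before the lemma. In particular, these generators are fixed by the (semilinear) action of $\Aut(\CC)$, which makes $V$ stable under $\Aut(\CC)$: for $\sigma\in\Aut(\CC)$ and $v=\sum_k c_k g_k\in V$ with $g_k$ among the generators, $\sigma(v)=\sum_k \sigma(c_k)\,g_k\in V$.

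The key step is to show $h_i\in V$ for each $i$; once this is established, the hypothesis on $\{h_1,\ldots,h_s\}$ forces $V=\calM$. Fix $i$, put $d=[L_i:\QQ]$, and let $\sigma_1,\ldots,\sigma_d$ be the distinct embeddings $L_i\hookrightarrow\CC$, with $\sigma_1$ the given inclusion. Then
\[
\Tr_{L_i/\QQ}(a_i^{j-1}h_i) \;=\; \sum_{\ell=1}^{d}\sigma_\ell(a_i)^{j-1}\,\sigma_\ell(h_i), \qquad j=1,\ldots,d.
\]
Because $L_i=\QQ[a_i]$, the conjugates $\sigma_1(a_i),\ldots,\sigma_d(a_i)$ are pairwise distinct, so the Vandermonde matrix $\bigl(\sigma_\ell(a_i)^{j-1}\bigr)_{j,\ell}$ is invertible over $\CC$. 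Inverting it expresses each $\sigma_\ell(h_i)$, in particular $h_i=\sigma_1(h_i)$, as a $\CC$-linear combination of the traces in (\ref{E:gen set}), so $h_i\in V$.

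For the finite-index claim, the $\Aut(\CC)$-stability of $\calM$ gives $\calM(\QQ)\otimes_\QQ\CC=\calM$ by Galois descent, and clearing denominators gives $\calM(\ZZ)\otimes_\ZZ\QQ=\calM(\QQ)$, so $\calM(\ZZ)$ is a free $\ZZ$-module of rank $\dim_\CC\calM$. The subgroup $\Lambda\subseteq\calM(\ZZ)$ generated by (\ref{E:gen set}) has $\CC$-span equal to $V=\calM$, hence its $\ZZ$-rank also equals $\dim_\CC\calM$, so $\Lambda$ has finite index in $\calM(\ZZ)$. The main conceptual step is the Vandermonde inversion, which recovers the individual conjugates $\sigma_\ell(h_i)$ from their symmetric traces; everything else is formal bookkeeping.
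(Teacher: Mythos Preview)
Your proof is correct and follows essentially the same approach as the paper's: both identify the generators as elements of $\calM(\ZZ)$, observe that their $\CC$-span is $\Aut(\CC)$-stable, and then use the Vandermonde relation among the conjugates $\sigma_\ell(a_i)$ to invert and recover each $h_i$ from the traces, forcing the span to equal $\calM$. The paper leaves the finite-index statement implicit, while you spell it out; your appeal to Galois descent there is valid but unnecessary, since once the generators (which lie in $\calM(\ZZ)$) are known to span $\calM$ over $\CC$, the rank comparison is immediate.
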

\begin{proof}
Let $S$ be the set (\ref{E:gen set}) and let $W$ be the span of $S$ in $\calM$.    The set $W$ is stable under the action of $\Aut(\CC)$ since $S \subseteq \calM(\ZZ)$.  So to prove that $S$ spans $\calM$, it suffices to show that each $h_e$ is in $W$ for each $1\leq e \leq s$.  

Fix $1\leq e \leq s$.  Set $d=[L_e:\QQ]$ and let $\sigma_1,\ldots, \sigma_d\colon L_e \hookrightarrow \CC$ be the distinct complex embeddings of $L_e$.    For any $1\leq j \leq d$, we have
\begin{align} \label{E:tr dep}
\Tr_{L_e/\QQ}(a_e^{j-1} \, h_e ) = \sum_{i=1}^d \sigma_i(a_e)^{j-1} \cdot \sigma_i(h_e).
\end{align}
The $d\times d$ matrix $B$ with $B_{i,j}=\sigma_i(a_e)^{j-1}$ has determinant $\prod_{1\leq i<j\leq d} (\sigma_j(a_e)-\sigma_i(a_e))\neq 0$.  Therefore from (\ref{E:tr dep}), we find that each $\sigma_i(h_e)$ is in the complex vector space spanned by $\{\Tr_{L_e/\QQ}(a_e^{j-1} \, h_e ): 1\leq j \leq d\}$.  In particular, $h_e$ is an element of $W$.    Since we took any $1\leq e \leq s$, this proves that $S$ spans $W$.
\end{proof}

\subsection{Newforms} \label{SS:newforms}

A \defi{newform} of weight $k$ and level $N$ is a cusp form $f\in S_k(\Gamma_1(N))$ that is an eigenform for all the Hecke operators $T_n$ and satisfies $a_1(f)=1$.   We have $T_n(f)=a_n(f) f$ for all $n\geq 1$.   Let $\calN(k,N)$ be the set of {newforms} of level $N$.     The set $\calN(k,N)\subseteq S_k(\Gamma_1(N))$ is stable under the action of $\Aut(\CC)$.   

Fix a newform $f\in \calN(k,N)$.   The coefficients of the $q$-expansion of $f$ generate a number field $L$ whose degree we will denote by $g$.     One can compute the field $L$ and can also compute an arbitrary number of terms of the $q$-expansion of $f$, see Algorithm 9.14 in \cite{MR2289048} when $k\geq 2$ for an algorithm using modular symbols.   \\

We now briefly discuss the harder excluded $k=1$ case. For an odd character $\varepsilon \colon (\ZZ/N\ZZ)^\times \to \CC^\times$, one can compute a basis of 
\[
M_1(\Gamma_1(N))(\varepsilon):=\{f\in M_1(\Gamma_1(N)) :\, f|\ang{d} = \varepsilon(d) f \, \text{ for all } d\in (\ZZ/N\ZZ)^\times\}
\]  
for which we can determine arbitrarily many terms of their $q$-expansions and all the coefficients lie in a cyclotomic extension; for example, see \cite{MR3384865}.    The action of the Hecke operators $T_n$ on a modular form in $M_1(\Gamma_1(N))(\chi)$ can be computed from its $q$-expansion.     From this, we can compute a basis of $M_1(\Gamma_1(N))= \oplus_\varepsilon M_1(\Gamma_1(N))(\varepsilon)$ for which we can determine arbitrarily many terms of their $q$-expansions (and all the coefficients lie in a cyclotomic extension) and we know the action of the diamond and Hecke operators with respect to this basis.     By simultaneously diagonalizing the action of $T_p$ for several small primes $p$, we can compute the newforms in $\calN(1,N)$ (we can check that a modular form $f\in M_1(\Gamma_1(N))$ is a cusp form by verifying that $f^2\in S_2(\Gamma_1(N))$).

\subsection{Pseudo-eigenvalues}

We now describe how the Atkin--Lehner operator $W_N$ acts on a newform $f\in \calN(k,N)$.
 
 \begin{prop} \label{P:W action on newforms}
For $f\in \calN(k,N)$, we have
\[
f|W_N = \lambda_N(f) \cdot (-1)^k N^{k/2} \cdot \bbar{f},
\]
where $\lambda_N(f)\in \CC$ is an algebraic number with absolute value $1$ and $\bbar{f} \in \calN(k,N)$ is obtained by applying complex conjugation to the coefficients of the $q$-expansion of $f$.
\end{prop}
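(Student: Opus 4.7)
The strategy is to identify $f|W_N$ with a scalar multiple of $\bbar{f}$ via Hecke theory and multiplicity one, then extract the absolute value of the scalar from the involutive behavior of $W_N$. For the first step, one checks by conjugating standard double-coset representatives of $T_n$ by $\left(\begin{smallmatrix}0 & -1 \\ N & 0\end{smallmatrix}\right)$ the commutation relation $T_n W_N = W_N \ang{n}^{-1} T_n$ on $S_k(\Gamma_1(N))$ for all $n$ with $(n,N)=1$. For a newform $f$ of nebentypus $\varepsilon$, the Petersson adjoint formula $T_n^* = \varepsilon(n)^{-1} T_n$ for $(n,N)=1$ gives $\overline{a_n(f)} = \varepsilon(n)^{-1} a_n(f)$, so $f|W_N$ is an eigenform for each such $T_n$ with eigenvalue $\varepsilon(n)^{-1} a_n(f) = \overline{a_n(f)}$, matching the eigenvalues of the newform $\bbar{f} \in \calN(k,N)$. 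Since $W_N$ preserves the new subspace, strong multiplicity one for newforms forces $f|W_N = c\, \bbar{f}$ for some $c\in \CC$.

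To compute $|c|$, I would compare two expressions for $\bbar{f}|W_N$. Applying $W_N$ to the identity $f|W_N = c\, \bbar{f}$ and using $W_N^2 = (-1)^k N^k$ gives $\bbar{f}|W_N = (-1)^k N^k c^{-1} f$. On the other hand, a direct substitution using $(f|W_N)(\tau)=\tau^{-k} f(-1/(N\tau))$ and $\overline{f(z)} = \bbar{f}(-\bar z)$ shows
\[
\overline{(f|W_N)(-\bar\tau)} = (-1)^k \tau^{-k}\, \bbar{f}(-1/(N\tau)) = (-1)^k (\bbar{f}|W_N)(\tau),
\]
so as $q$-expansions $\overline{f|W_N} = (-1)^k\, \bbar{f}|W_N$. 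Combined with $\overline{f|W_N}=\overline{c}\, f$ (from conjugating coefficients in $f|W_N = c\,\bbar{f}$), this yields $\bbar{f}|W_N = (-1)^k \overline{c}\, f$. Equating the two expressions for $\bbar{f}|W_N$ gives $\overline{c}\cdot c = N^k$, hence $|c| = N^{k/2}$. Writing $c = (-1)^k N^{k/2} \lambda_N(f)$ makes $|\lambda_N(f)|=1$.

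Finally, the coefficients of $f$ lie in a number field $L$; by Theorem~\ref{T:AL arithmetic}(\ref{T:AL arithmetic i}) applied with $B=\OO_{L(\zeta_N)}[1/N]$, the coefficients of $f|W_N$ are algebraic, so comparing with $c\, \bbar{f}$ (whose coefficients are algebraic and nonzero) forces $c$, and hence $\lambda_N(f)$, to lie in $\Qbar$. The main obstacle is the Hecke/Atkin--Lehner commutation in the first step, which is a standard but delicate coset computation; the sign bookkeeping in the second step, particularly the factor $(-1)^k$ that arises when complex conjugation is combined with the slash action, also requires careful verification.
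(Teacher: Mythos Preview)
Your proposal is correct and is essentially the classical argument. The paper does not supply an independent proof: its entire argument is the sentence ``See \S1 and Theorem~1.1 of \cite{MR508986}'', together with a remark on normalization. What you have sketched is precisely the proof one finds in Atkin--Li: use the commutation of $W_N$ with the Hecke operators $T_n$ for $(n,N)=1$ (up to a diamond twist), the identity $\overline{a_n(f)}=\varepsilon_f(n)^{-1}a_n(f)$ coming from the Petersson adjoint, and strong multiplicity one on the new subspace to obtain $f|W_N=c\,\bbar f$; then determine $|c|$ from $W_N^2=(-1)^kN^k$ together with the compatibility $\overline{f|W_N}=(-1)^k\,\bbar f|W_N$. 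Your algebraicity step via Theorem~\ref{T:AL arithmetic}(\ref{T:AL arithmetic i}) is a clean way to finish. One small point worth making explicit in a write-up is that $W_N$ preserves $S_k(\Gamma_1(N))_\new$ (this is what justifies invoking multiplicity one inside the new subspace rather than in all of $S_k(\Gamma_1(N))$); this follows, for instance, from Lemma~\ref{L:W action on eigenbasis}.
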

\begin{proof}
See \S1 and Theorem~1.1 of \cite{MR508986}.  Note that our Atkin--Lehner operators are normalized differently and the proposition is stated so that $\lambda_N(f)$ agrees with the value from \cite{MR508986}.
\end{proof}

The number $\lambda_N(f)$ is called the \defi{pseudo-eigenvalue} of $f$.   In special cases, one has a closed expression for $\lambda_N(f)$.  For example, if $N$ is squarefree, then an expression for $\lambda_N(f)$ in terms of Gauss sums can be found in \cite{MR0427235}.  \\

Let $\varepsilon_f\colon (\ZZ/N\ZZ)^\times \to \CC^\times$ be the \defi{nebentypus} of $f$; it is the unique such Dirichlet character satisfying $f|\ang{d}=\varepsilon_f(d) f$ for all $d\in (\ZZ/N\ZZ)^\times$.   Note that one can determine $\varepsilon_f$ from the $q$-expansion of $f$ since $a_{p^2}(f)=a_p(f)^2-\varepsilon_f(p) p^{k-2}$ for all primes $p\nmid N$.

\subsubsection{Approximating pseudo-eigenvalues} \label{SS:pseudo approx}

For a fixed newform $f\in \calN(k,N)$, we now describe how to numerically approximate $\lambda_N(f) \in \CC$ from enough terms of the $q$-expansion of $f$.   

For a real number $b>0$, substituting $\tau=i\cdot b/N^{1/2}$ into the equation from Proposition~\ref{P:W action on newforms} gives
\[
(i\cdot b/N^{1/2})^{-k} \cdot f(i\cdot b^{-1}/N^{1/2}) = \lambda_N(f)\cdot (-1)^k N^{k/2} \cdot \bbar{f}(i\cdot b/N^{1/2}) 
\]
and hence
\begin{align} \label{E:lambda comp}
 i^k b^{-k} \, \sum_{n=1}^\infty a_n(f) \,(e^{-2\pi /(bN^{1/2})})^n= \lambda_N(f) \, \sum_{n=1}^\infty \bbar{a_n(f)}\, (e^{-2\pi b/N^{1/2}})^n.
\end{align}

If we know the coefficient $a_n(f)$ for all $n\leq  N$, then we can compute the sums $\sum_{n=1}^N a_n(f) \,(e^{-2\pi /(bN^{1/2})})^n$ and $\sum_{n=1}^N \bbar{a_n(f)}\, (e^{-2\pi b/N^{1/2}})^n.$   We can then approximate the series in (\ref{E:lambda comp}); the error terms of these approximations can be bounded using that $|a_n(f)| \leq d(n)n^{k/2}$ by Deligne, where $d(n)$ is the number of divisors of $n$. 

If $\bbar{f}(i\cdot b/N^{1/2})$ is non-zero, then (\ref{E:lambda comp}) gives a formula for $\lambda_N(f)$ that can be used to approximate it by computing enough terms of each series.   Note that we have $\bbar{f}(i\cdot b/N^{1/2})\neq 0$ away from a discrete set of $b>0$ since $\bbar{f}$ is non-zero and holomorphic.      In practice, one wants to choose $b$ close to $1$; this ensure that both series converge absolutely at a similar rate.

\subsection{Atkin--Lehner--Li theory} \label{SS:ALL}
  
For background, see \S9.2 of \cite{MR2289048}.    For positive divisors $M$ of $N$ and $d$ of $N/M$, we have a degeneracy map
\[
\alpha_d\colon S_k(\Gamma_1(M))\hookrightarrow  	S_k(\Gamma_1(N)),\quad f(\tau) \mapsto f(d\tau).
\]
On $q$-expansions, we have $\alpha_d(\sum_{n=1}^\infty a_n q^n ) = \sum_{n=1}^\infty a_n q^{dn}$.

The \defi{old subspace} of $S_k(\Gamma_1(N))$ is the subspace generated by $\alpha_d(S_k(\Gamma_1(M)))$ for all positive divisors $M|N$ with $M\neq N$ and $d|(N/M)$; we denote it by $S_k(\Gamma_1(N))_\old$.   The \defi{new subspace} of $S_k(\Gamma_1(N))$, which we denote by $S_k(\Gamma_1(N))_\new$, is the orthogonal complement of the subspace $S_k(\Gamma_1(N))_\old$ of $S_k(\Gamma_1(N))$ with respect to the Petersson inner product.   The set $\calN(k,N)$ of newforms is a basis of $S_k(\Gamma_1(N))_{\new}$.   We have a decomposition
\begin{align} \label{E:newform decomp}
S_k(\Gamma_1(N)) = \bigoplus_{M|N} \bigoplus_{d|\frac{N}{M}} \alpha_d(S_k(\Gamma_1(M))_{\new}).
\end{align}

\begin{lemma} \label{L:W action on eigenbasis}
Take any positive divisors $M|N$ and $d|(N/M)$, and set $e:=N/(dM)$.  For any $f \in S_k(\Gamma_1(M))_{\new}$ and $m\in (\ZZ/N\ZZ)^\times$, we have
\[
\alpha_d(f)|W_N = e^{k} \, \alpha_e(f|W_M) \quad \text{ and }\quad \alpha_d(f)|\ang{m} = \alpha_d(f|\ang{m}).
\]
\end{lemma}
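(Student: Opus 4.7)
The plan is to reduce both assertions to matrix identities and then appeal to the fact that the weight-$k$ slash action is trivial on scalar matrices, together with the freedom in choosing matrix representatives for the diamond operator.

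For the first assertion, I would start by writing the degeneracy map as a slash action: $\alpha_d(f) = d^{-k/2}\, f|\!\left(\begin{smallmatrix}d & 0 \\ 0 & 1\end{smallmatrix}\right)$. Composing with $W_N = N^{k/2}|\!\left(\begin{smallmatrix}0 & -1 \\ N & 0\end{smallmatrix}\right)$ and multiplying matrices turns the left-hand side into $N^{k/2} d^{-k/2} f|\!\left(\begin{smallmatrix}0 & -d \\ N & 0\end{smallmatrix}\right)$. The key algebraic step is the factorization
\[
\left(\begin{smallmatrix}0 & -d \\ N & 0\end{smallmatrix}\right) = \left(\begin{smallmatrix}0 & -1 \\ M & 0\end{smallmatrix}\right)\cdot (dI)\cdot\left(\begin{smallmatrix}e & 0 \\ 0 & 1\end{smallmatrix}\right),
\]
valid because $N=deM$. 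Since $dI$ acts trivially under $|_k$, this reduces the middle slash action to $M^{-k/2}(f|W_M)|\!\left(\begin{smallmatrix}e & 0 \\ 0 & 1\end{smallmatrix}\right) = M^{-k/2}e^{k/2}\alpha_e(f|W_M)$. Bookkeeping the scalar factors gives $N^{k/2}d^{-k/2}M^{-k/2}e^{k/2} = (Ne/(dM))^{k/2} = e^k$, which produces the claimed formula.

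For the second assertion, the strategy is to exploit the flexibility in choosing the $\SL_2(\ZZ)$-representative of $\ang{m}$. I would select $\gamma=\left(\begin{smallmatrix}a & b \\ cN & d'\end{smallmatrix}\right) \in \SL_2(\ZZ)$ with $a\equiv m^{-1}$, $d'\equiv m \pmod N$ (the existence of such a $\gamma$ with lower-left entry divisible by $N$ is elementary). Then I would propose $\gamma' := \left(\begin{smallmatrix}a & bd \\ ceM & d'\end{smallmatrix}\right)$ as the corresponding representative of $\ang{m}$ on $S_k(\Gamma_1(M))$: the determinant is $ad'-bcdeM = ad'-bcN = 1$, and the lower-left entry $ceM$ is divisible by $M$, so $\gamma'$ has the required shape modulo $M$. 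The whole argument then collapses to the single matrix identity
\[
\left(\begin{smallmatrix}d & 0 \\ 0 & 1\end{smallmatrix}\right)\gamma \;=\; \gamma'\left(\begin{smallmatrix}d & 0 \\ 0 & 1\end{smallmatrix}\right),
\]
which is a direct computation, yielding $\alpha_d(f)|\ang{m} = \alpha_d(f|\ang{m})$ upon applying $f|\cdot$ and dividing by $d^{k/2}$. Note that newness of $f$ plays no role here—the identity holds for any $f \in S_k(\Gamma_1(M))$.

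I do not anticipate any real obstacle; both parts reduce to concrete matrix manipulations. The only minor subtlety worth highlighting in the writeup is insisting that the representative $\gamma$ have lower-left entry divisible by $N$ (not merely by $M$), so that the factor of $d$ can be absorbed cleanly when passing to $\gamma'$. Both equalities then follow from the identity $f|(\gamma_1\gamma_2) = (f|\gamma_1)|\gamma_2$ and the triviality of the $|_k$ action on scalar matrices.
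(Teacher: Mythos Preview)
Your proposal is correct and follows essentially the same route as the paper's proof: both assertions are reduced to elementary matrix factorizations inside the slash action, with the scalar matrix $dI$ (equivalently, the identity $\left(\begin{smallmatrix}N/M & 0 \\ 0 & d\end{smallmatrix}\right)\sim\left(\begin{smallmatrix}e & 0 \\ 0 & 1\end{smallmatrix}\right)$) handling the first part and a suitable $\SL_2(\ZZ)$-representative of $\ang{m}$ handling the second. Your remark that newness of $f$ is irrelevant is also in line with the paper, which never invokes it in the proof.
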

\begin{proof}
We have
\begin{align*}
\alpha_d(f)|W_N & = d^{-k/2} N^{k/2} (f| \left(\begin{smallmatrix}d & 0 \\ 0 & 1\end{smallmatrix}\right))|\left(\begin{smallmatrix}0 & -1 \\ N & 0\end{smallmatrix}\right)\\
&= d^{-k/2} N^{k/2} M^{-k/2} (f|W_M)|(\left(\begin{smallmatrix}0 & -1 \\ M & 0\end{smallmatrix}\right)^{-1} \left(\begin{smallmatrix}d & 0 \\ 0 & 1\end{smallmatrix}\right)\left(\begin{smallmatrix}0 & -1 \\ N & 0\end{smallmatrix}\right))\\
&= e^{k/2}  (f|W_M)|(\left(\begin{smallmatrix}0 & -1 \\ M & 0\end{smallmatrix}\right)^{-1} \left(\begin{smallmatrix}d & 0 \\ 0 & 1\end{smallmatrix}\right)\left(\begin{smallmatrix}0 & -1 \\ N & 0\end{smallmatrix}\right))\\
&= e^{k/2}  (f|W_M)|\left(\begin{smallmatrix}N/M & 0 \\ 0 & d\end{smallmatrix}\right) = e^{k/2}  (f|W_M)|\left(\begin{smallmatrix}e & 0 \\ 0 & 1\end{smallmatrix}\right) = e^{k}  \alpha_e(f|W_M). 
\end{align*}
Now take any $m \in (\ZZ/N\ZZ)^\times$ and choose an $\gamma\in \SL_2(\ZZ)$ satisfying $\gamma\equiv \left(\begin{smallmatrix}m^{-1} & 0 \\ 0 & m\end{smallmatrix}\right) \pmod{N}$.  We have
\begin{align*}
\alpha_d(f|\ang{m}) &= d^{-k/2} \, f| (\gamma \left(\begin{smallmatrix}d & 0 \\ 0 & 1\end{smallmatrix}\right)) =  \alpha_d(f)| (\left(\begin{smallmatrix}d & 0 \\ 0 & 1\end{smallmatrix}\right)^{-1} \gamma \left(\begin{smallmatrix}d & 0 \\ 0 & 1\end{smallmatrix}\right)) = \alpha_d(f)|\ang{m},
\end{align*}
where the last equality uses that $\left(\begin{smallmatrix}d & 0 \\ 0 & 1\end{smallmatrix}\right)^{-1} \gamma \left(\begin{smallmatrix}d & 0 \\ 0 & 1\end{smallmatrix}\right)$ is in $\SL_2(\ZZ)$ and is congruent to $\left(\begin{smallmatrix}m^{-1} & * \\ 0 & m\end{smallmatrix}\right)$ modulo $N$.
\end{proof}

\subsection{The space $\calM_f$} \label{SS:Mf}

Fix a newform $f\in \calN(k,N)$.   The coefficients of the $q$-expansion of $f$ generate a number field $L$ whose degree we will denote by $g$.  As noted in \S\ref{SS:newforms}, one can compute the field $L$ and arbitrarily many terms of the $q$-expansion of $f$.    Fix an $a\in \OO_L$ satisfying $L=\QQ[a]$. 

  We define $\calM_f$ to be the subspace of $S_k(\Gamma_1(N))$ generated by $\sigma(f)$ with $\sigma\in \Aut(\CC)$.   Moreover, the set $\{\sigma_1(f),\ldots, \sigma_g(f)\}$ is a basis of $\calM_f$, where $\sigma_1,\ldots, \sigma_g\colon L\hookrightarrow \CC$ are the distinct complex embeddings of $L$.      By Lemma~\ref{L:gen calM}, the set 
\begin{align} \label{E:Tr(af) basis}
\big\{ \Tr_{L/\QQ}(a^{j-1} f ) : 1\leq j \leq g  \big\}
\end{align}
spans $\calM_f$ and generates a finite index subgroup of $\calM_f(\ZZ)$.  Moreover, the set (\ref{E:Tr(af) basis}) is a basis of the $g$-dimensional vector space $\calM_f$.   In particular, $\calM_f(\ZZ)$ spans $\calM_f$.   Since we can compute arbitrarily many terms of the $q$-expansion of $f$, we can compute arbitrarily many terms of the $q$-expansion of the modular forms in the set (\ref{E:Tr(af) basis}).   By Lemma~\ref{L:finite index SNF}, we can find a basis $f_1,\ldots, f_g$ of the $\ZZ$-module $\calM_f(\ZZ)$ such that an arbitrary number of terms in the $q$-expansion of each $f_i$ can be computed.   
  
 The space $\calM_f$ is stable under the action of $W_N$ since by Proposition~\ref{P:W action on newforms}, we have 
\[
\sigma(f)|W_N = \lambda_N(\sigma(f)) \cdot (-1)^k N^{k/2} \cdot \bbar{\sigma(f)}
\]
for each $\sigma\in \Aut(\CC)$.    The space $\calM_f$ is stable under the action of $\ang{d}$, with $d\in (\ZZ/N\ZZ)^\times$, since $\sigma(f)|\ang{d} = \varepsilon_{\sigma(f)}(d)\, \sigma(f)$ for each $\sigma\in \Aut(\CC)$.  
 
We have now verified the following.

\begin{lemma} \label{L:Mf conditions}
For each $f\in \calN(k,N)$, $\calM_f$ satisfies the conditions (\ref{c:a})--(\ref{c:d}) from \S\ref{SS:setup intro}.
\end{lemma}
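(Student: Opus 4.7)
The plan is to verify the four conditions (\ref{c:a})--(\ref{c:d}) in turn for $\calM_f$, drawing almost entirely on facts collected in \S\ref{SS:Mf} together with Proposition \ref{P:W action on newforms} and Lemmas \ref{L:finite index SNF} and \ref{L:gen calM}. Throughout, I will use the fact that $\calM_f$, as the $\CC$-span of the $\Aut(\CC)$-orbit of $f$, is itself stable under the $\Aut(\CC)$-action: if $v = \sum c_i \sigma_i(f)$ and $\tau \in \Aut(\CC)$, then $\tau(v) = \sum \tau(c_i) (\tau\sigma_i)(f)$, which lies in the same span.

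For (\ref{c:a}) and (\ref{c:b}), I would reduce to showing each $\sigma(f)$ is sent back into $\calM_f$. For (\ref{c:a}), since $\sigma(f)$ is again a newform of weight $k$ and level $N$, Proposition \ref{P:W action on newforms} gives $\sigma(f)|W_N = \lambda_N(\sigma(f)) \cdot (-1)^k N^{k/2} \cdot \bbar{\sigma(f)}$. Complex conjugation is itself an automorphism of $\CC$, so $\bbar{\sigma(f)}$ is another element of the $\Aut(\CC)$-orbit of $f$ and hence lies in $\calM_f$. For (\ref{c:b}), each $\sigma(f)$ is a simultaneous eigenform for the diamond operators with nebentypus $\varepsilon_{\sigma(f)}$, so $\sigma(f)|\ang{d} = \varepsilon_{\sigma(f)}(d) \cdot \sigma(f) \in \calM_f$.

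For (\ref{c:c}) and (\ref{c:d}), I would apply Lemma \ref{L:gen calM} to the singleton $\{f\}$ with number field $L_1 = L$ and generator $a_1 = a$ (any element with $L = \QQ[a]$). The two hypotheses of that lemma hold: $\calM_f$ is $\Aut(\CC)$-stable as noted above, and $\{f\}$ sits in no proper $\Aut(\CC)$-stable subspace of $\calM_f$ since by definition $\calM_f$ is the smallest such space containing $f$. The lemma then delivers the spanning set $\{\Tr_{L/\QQ}(a^{j-1} f) : 1 \leq j \leq g\}$ of $\calM_f$, lying inside $\calM_f(\ZZ)$ and generating a finite-index subgroup. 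Matching ranks, $\calM_f(\ZZ)$ has $\ZZ$-rank $g = \dim_\CC \calM_f$, so it spans $\calM_f$ over $\CC$, giving (\ref{c:c}). Since the $q$-expansion of $f$ is computable to arbitrary precision (via modular symbols for $k \geq 2$, and via the procedure at the end of \S\ref{SS:newforms} for $k=1$), the same holds for the traces, and Lemma \ref{L:finite index SNF} then upgrades this generating set of a finite-index subgroup to an actual $\ZZ$-basis of $\calM_f(\ZZ)$ with computable $q$-expansions, giving (\ref{c:d}).

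The argument is essentially bookkeeping: no genuinely hard step arises, since each condition is either an orbit-level computation (for (\ref{c:a}) and (\ref{c:b})) or an invocation of a lemma whose hypotheses are immediate from the definition of $\calM_f$. The only place where care is needed is making sure the chain (\ref{c:c})$\Rightarrow$(\ref{c:d}) is valid: the spanning set produced by Lemma \ref{L:gen calM} sits in $\calM_f(\ZZ)$ only as a finite-index subgroup, so the passage to an actual $\ZZ$-basis relies on Lemma \ref{L:finite index SNF}, and one must check Sturm's bound applies in our setting (which it does since $\calM_f \subseteq S_k(\Gamma_1(N))$).
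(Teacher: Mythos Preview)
Your proposal is correct and mirrors the paper's own argument almost exactly: the paper verifies (\ref{c:a}) and (\ref{c:b}) via Proposition~\ref{P:W action on newforms} and the nebentypus action on the $\sigma(f)$, and handles (\ref{c:c}) and (\ref{c:d}) by applying Lemma~\ref{L:gen calM} to the single form $f$ (with $L=\QQ[a]$) and then Lemma~\ref{L:finite index SNF}. There is no substantive difference in method.
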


\subsection{Cusp forms} \label{SS:cusp forms}

Since $\calN(k,N)$ is a basis of $S_k(\Gamma_1(N))_{\new}$, we have
\[
S_k(\Gamma_1(N))_{\new} = \bigoplus_{f\in \calN'(k,N)} \calM_f,
\]
where $\calN'(k,N)$ is a set of representatives of the $\Aut(\CC)$-orbits on $\calN(k,N)$.  By (\ref{E:newform decomp}), we have
\begin{align} \label{E:newform decomp 2}
S_k(\Gamma_1(N)) = \bigoplus_{M|N} \bigoplus_{f\in \calN'(k,M)}  \bigoplus_{d|\frac{N}{M}}  \alpha_d(\calM_f) = \bigoplus_{f\in \calN'(k,M)} \bigoplus_{M|N} \bigoplus_{de=N/M, \, d\leq e}  \calM_{f,d},
\end{align}
where for $d|(N/M)$ and $e=(N/M)/d$ we define
\[
\calM_{f,d}:= 
\begin{cases}
\alpha_d(\calM_f)\oplus \alpha_e(\calM_f) & \text{ if $d\neq e$},\\ 
\alpha_d(\calM_f) & \text{ if $d=e$}.
\end{cases}   
\]

\begin{lemma} \label{L:Mfd conditions}
Take any positive divisors $M|N$ and $d|(N/M)$, and set $e:=(N/M)/d$.  Then for any newform $f\in \calN(k,M)$, $\calM_{f,d}$  satisfies conditions (\ref{c:a})--(\ref{c:d}) of \S\ref{SS:setup intro}.
\end{lemma}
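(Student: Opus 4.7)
The approach is to bootstrap from Lemma~\ref{L:Mf conditions}, which supplies conditions (a)--(d) for $\calM_f$ at level $M$, via the two formulas of Lemma~\ref{L:W action on eigenbasis} that describe how $\alpha_c$ intertwines with $W_N$ and with the diamond operators. The case distinction $d=e$ versus $d\neq e$ is handled uniformly throughout.

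For condition (a), Lemma~\ref{L:W action on eigenbasis} gives $\alpha_d(g)|W_N=e^k\,\alpha_e(g|W_M)$ and, by the symmetric instance in which the roles of $d$ and $e$ are exchanged, $\alpha_e(g)|W_N=d^k\,\alpha_d(g|W_M)$ for every $g\in\calM_f$. Since $\calM_f$ is $W_M$-stable, these identities show that $W_N$ swaps the two summands of $\calM_{f,d}=\alpha_d(\calM_f)\oplus\alpha_e(\calM_f)$ when $d\neq e$, and preserves $\alpha_d(\calM_f)$ when $d=e$; either way, $\calM_{f,d}$ is $W_N$-stable. For condition (b), the second identity of Lemma~\ref{L:W action on eigenbasis} reads $\alpha_c(g)|\ang{m}=\alpha_c(g|\ang{m})$ for $c\in\{d,e\}$ and $m\in(\ZZ/N\ZZ)^\times$, where on the right one interprets $m$ modulo $M$. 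Since $\calM_f$ is stable under every diamond operator at level $M$, both $\alpha_d(\calM_f)$ and $\alpha_e(\calM_f)$ are individually stable under $\ang{m}$ at level $N$, so $\calM_{f,d}$ is stable as well.

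For conditions (c) and (d), invoke Lemma~\ref{L:Mf conditions} to produce a $\ZZ$-basis $g_1,\ldots,g_r$ of $\calM_f(\ZZ)$ whose $q$-expansions are computable to any desired precision. On $q$-expansions the map $\alpha_c$ substitutes $q\mapsto q^c$, so each $\alpha_c(g_i)$ lies in $M_k(\Gamma_1(N),\ZZ)\cap\calM_{f,d}=\calM_{f,d}(\ZZ)$ and is computable. The list $\{\alpha_d(g_i)\}_i\cup\{\alpha_e(g_i)\}_i$ (with the second list dropped when $d=e$) is linearly independent over $\CC$, thanks to the direct sum decomposition (\ref{E:newform decomp 2}), and has cardinality equal to $\dim_\CC \calM_{f,d}$; it therefore generates a full-rank sublattice of $\calM_{f,d}(\ZZ)$. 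Feeding this generating set into Lemma~\ref{L:finite index SNF} yields a $\ZZ$-basis of $\calM_{f,d}(\ZZ)$ with computable $q$-expansions, establishing (c) and (d) simultaneously.

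The real content is carried by Lemma~\ref{L:W action on eigenbasis} and the decomposition (\ref{E:newform decomp 2}); no step is genuinely difficult. The only mild obstacle is keeping the cases $d=e$ and $d\neq e$ straight, so that one does not spuriously claim a direct sum of $\alpha_d(\calM_f)$ with itself when $d=e$, and so that the interchange action of $W_N$ between the two summands is recognized correctly when $d\neq e$.
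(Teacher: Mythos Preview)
Your proof is correct and follows essentially the same approach as the paper: both invoke Lemma~\ref{L:Mf conditions} for $\calM_f$ at level $M$, use Lemma~\ref{L:W action on eigenbasis} to verify stability under $W_N$ and the diamond operators, take the images $\alpha_d(g_i),\alpha_e(g_i)$ of an integral basis of $\calM_f(\ZZ)$ as a spanning set with integer coefficients, and then apply Lemma~\ref{L:finite index SNF} to upgrade to a genuine $\ZZ$-basis of $\calM_{f,d}(\ZZ)$. Your write-up is slightly more explicit than the paper's about the $d=e$ versus $d\neq e$ bookkeeping and about how $W_N$ interchanges the two summands, but the underlying argument is the same.
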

\begin{proof}
From Lemma~\ref{L:Mf conditions}, $\calM_f$ satisfies the conditions (\ref{c:a})--(\ref{c:d}) from \S\ref{SS:setup intro} with $N$ replaced by $M$.   There is a basis $f_1,\ldots, f_g$ of the $\ZZ$-module $\calM_f(\ZZ)$ that is a basis of $\calM_f$ and for which arbitrary number of terms of the $q$-expansion of each $f_i$ can be computed.

Since $\calM_f$ is stable under the action of $W_M$ and the diamond operators, Lemma~\ref{L:W action on eigenbasis} implies that $\calM_{f,d}$ is stable under the action of $W_N$ and the diamond operators.  The set $\{\alpha_d(f_1),\ldots, \alpha_d(f_g), \alpha_e(f_1),\ldots, \alpha_e(f_g)\}$ generates a finite index subgroup of $\calM_{f,d}(\ZZ)$ and spans $\calM_{f,d}$.   By Lemma~\ref{L:finite index SNF}, we can find a basis of $\calM_{f,d}(\ZZ)$ for which an arbitrary number of terms of the $q$-expansions can be computed.    We have thus verified that $\calM_{f,d}$  satisfies conditions (\ref{c:a})--(\ref{c:d}).
\end{proof}

Now take any congruence subgroup $\Gamma_1(N) \subseteq \Gamma \subseteq \Gamma_0(N)$.   Let $H$ be the subgroup of $(\ZZ/N\ZZ)^\times$ such that $\Gamma$ consists of the matrices in $\SL_2(\ZZ)$ whose image modulo $N$ is of the form $\left(\begin{smallmatrix} h^{-1} & * \\ 0 & h\end{smallmatrix}\right)$ for some $h\in H$.   By (\ref{E:newform decomp 2}), we have
\[
S_k(\Gamma)= \bigoplus_{M|N} \bigoplus_{\substack{f\in \calN'(k,M)\\ \varepsilon_f(H)=1}} \bigoplus_{\substack{d|\frac{N}{M} \\ d\leq (N/M)^{1/2}}}  \calM_{f,d}.
\]
The following proposition is now an immediate consequence of Lemmas~\ref{L:Mfd conditions} and \ref{L:M reduction}.  

\begin{prop} \label{P:cusp condition check}
The space $S_k(\Gamma)$ satisfies conditions (\ref{c:a})--(\ref{c:d}) of \S\ref{SS:setup intro}.
\end{prop}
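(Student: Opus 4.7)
The plan is to invoke the Atkin--Lehner--Li decomposition displayed immediately before the proposition and then apply the two preparatory lemmas in sequence.

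First I would justify the displayed decomposition of $S_k(\Gamma)$. Starting from the decomposition (\ref{E:newform decomp 2}) of $S_k(\Gamma_1(N))$, note that $S_k(\Gamma)$ is cut out inside $S_k(\Gamma_1(N))$ exactly by the condition that $f|\ang{h}=f$ for every $h\in H$ (this is the translation of $\Gamma_1(N)\subseteq \Gamma \subseteq \Gamma_0(N)$ into a statement about diamond operators). By the second part of Lemma~\ref{L:W action on eigenbasis}, the degeneracy map $\alpha_d$ intertwines the action of $\ang{h}$ on $S_k(\Gamma_1(M))$ and $S_k(\Gamma_1(N))$, so each summand $\calM_{f,d}$ is stable under the diamond operators, and taking $H$-invariants distributes over the direct sum. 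On a newform $f\in \calN(k,M)$ the operator $\ang{h}$ acts by the scalar $\varepsilon_f(h)$, hence $\ang{h}$ acts on all of $\calM_f$ (and therefore on $\calM_{f,d}$) by $\varepsilon_f(h)$. Thus the summand indexed by $(M,f,d)$ survives in $S_k(\Gamma)$ precisely when $\varepsilon_f(H)=1$. The restriction $d\leq (N/M)^{1/2}$ is already built into the definition of $\calM_{f,d}$, which pairs a divisor with its complement.

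With the decomposition in hand, Lemma~\ref{L:Mfd conditions} provides conditions (\ref{c:a})--(\ref{c:d}) for each individual summand $\calM_{f,d}$, and Lemma~\ref{L:M reduction} promotes these conditions from the summands to the finite direct sum $S_k(\Gamma)$. This finishes the argument.

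No step looks genuinely hard: the only mildly subtle point is the bookkeeping identification of $S_k(\Gamma)$ with the $H$-fixed part of the decomposition (\ref{E:newform decomp 2}), and that one reduces at once to the commutation between $\ang{h}$ and the $\alpha_d$ supplied by Lemma~\ref{L:W action on eigenbasis} together with the fact that $f\in \calN(k,M)$ is an eigenform for $\ang{h}$ with eigenvalue $\varepsilon_f(h)$.
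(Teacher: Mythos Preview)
Your approach is exactly the paper's: invoke the displayed decomposition of $S_k(\Gamma)$, apply Lemma~\ref{L:Mfd conditions} to each summand, and then apply Lemma~\ref{L:M reduction}. One small imprecision: the diamond operator $\ang{h}$ does not act on all of $\calM_f$ by the single scalar $\varepsilon_f(h)$, since on $\sigma(f)$ it acts by $\sigma(\varepsilon_f(h))$; however, because $\varepsilon_f(h)=1$ if and only if $\sigma(\varepsilon_f(h))=1$, your conclusion that the summand survives exactly when $\varepsilon_f(H)=1$ is unaffected.
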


\subsection{Eisenstien series} \label{SS:Eisenstein}
Fix a congruence subgroup $\Gamma_1(N)\subseteq \Gamma \subseteq \Gamma_0(N)$ and let $H$ be a subgroup of $(\ZZ/N\ZZ)^\times$ associated to $\Gamma$ as in \S\ref{SS:cusp forms}.  Let $E_k(\Gamma)$ be the Eisenstein subspace of $M_k(\Gamma)$.    We have
\[
M_k(\Gamma)=E_k(\Gamma)\oplus S_k(\Gamma).
\]

  Let $\chi_1$ and $\chi_2$ be primitive Dirichlet characters modulo $N_1$ and $N_2$, respectively, satisfying $(\chi_1\chi_2)(-1)=(-1)^k$.    For each integer $e\geq 1$,  define the \defi{Eisenstein series}
\[
F_{k}(\chi_1,\chi_2, e)(\tau) = c_0 + \sum_{n=1}^\infty \sigma_{k-1}(\chi_1,\chi_2,n) q^{ne},
\]
where 
\[
\sigma_{k-1}(\chi_1,\chi_2,n) := \sum_{d|n, d\geq 1} d^{k-1} \chi_1(d)\chi_2(n/d),
\]
$c_0=0$ if $N_2\neq 1$, and $c_0=-B_{k,\chi_1}/(2k)$ if $N_2=1$ (where $B_{k,\chi}$ is a generalized Bernoulli number, cf.~\S5 of \cite{MR2289048}).
  
Except for the case with $k=2$ and $\chi_1=\chi_2=1$, $F_{k}(\chi_1,\chi_2, e)$ is an element of $M_k(\Gamma_1(N_1N_2e))$.   Note that $F_{k}(\chi_1,\chi_2, e)$ satisfies $F_{k}(\chi_1,\chi_2, e)|\ang{d} = (\chi_1\chi_2)(d) \cdot F_{k}(\chi_1,\chi_2, e)$ for all $d\in (\ZZ/N_1N_2 e\ZZ)^\times$.  If $k=2$, $\chi_1=\chi_2=1$ and $e>1$, then $F_{k}(\chi_1,\chi_2, 1) - e F_{k}(\chi_1,\chi_2, e)$ is an element of $M_2(\Gamma_0(e))$.     

The set $B$ of Eisenstein series as above with $N_1 N_2 e$ dividing $N$ and $(\chi_1\chi_2)(H)=1$ form a basis of $E_k(\Gamma)$; this follows from Theorem~5.9 of \cite{MR2289048}.

  We can compute an arbitrary number of terms of the $q$-expansion of modular forms in $B$.   We also know the action of the diamond operators with respect to the basis $B$.   For each $f\in B$, we can also compute arbitrarily many terms in the $q$-expansion of $f|W_N$; for example, see \S2.2 of \cite{MR3889557}.

\begin{prop}
The space $M_k(\Gamma)$ satisfies conditions (\ref{c:a})--(\ref{c:d}) of \S\ref{SS:setup intro}.
\end{prop}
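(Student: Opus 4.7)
The plan is to use the direct-sum decomposition
\[
M_k(\Gamma) = E_k(\Gamma) \oplus S_k(\Gamma)
\]
combined with Lemma~\ref{L:M reduction} and Proposition~\ref{P:cusp condition check}. These immediately reduce the proposition to verifying that the Eisenstein subspace $E_k(\Gamma)$ satisfies conditions (\ref{c:a})--(\ref{c:d}) from \S\ref{SS:setup intro}, since we already know both summands are subspaces of $M_k(\Gamma_1(N))$.

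Condition (\ref{c:b}) for $E_k(\Gamma)$ is essentially immediate: each $F\in B$ is a diamond eigenvector with eigenvalue $(\chi_1\chi_2)(d)$, so $E_k(\Gamma)$ is a sum of diamond eigenspaces and hence stable under the $\ang{d}$. For condition (\ref{c:a}) I would argue via the Petersson pairing. The rescaled operator $N^{-k/2}W_N = |_k\left(\begin{smallmatrix}0 & -1 \\ N & 0\end{smallmatrix}\right)$ is induced by a matrix that normalizes $\Gamma$, so it preserves the convergent pairing
\[
M_k(\Gamma)\times S_k(\Gamma)\to\CC,\qquad (f,g)\mapsto \int_{\Gamma\backslash\mathfrak{H}} f\,\bbar{g}\,y^{k}\,d\mu.
\]
Since $W_N$ preserves $S_k(\Gamma)$ (stated in the introduction), the subspace $E_k(\Gamma)=\{f\in M_k(\Gamma):\langle f,g\rangle=0\ \text{for all}\ g\in S_k(\Gamma)\}$ is preserved as well.

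Before attacking (\ref{c:c}) and (\ref{c:d}) I need $E_k(\Gamma)$ to be $\Aut(\CC)$-stable, and this is read off directly from the basis $B$: the action of $\sigma\in\Aut(\CC)$ sends $F_k(\chi_1,\chi_2,e)$ to $F_k(\sigma\chi_1,\sigma\chi_2,e)$, which again lies in $B$ because $(\chi_1\chi_2)(H)=1$ is preserved by $\sigma$. The analogous statement holds for the exceptional $k=2$, $\chi_1=\chi_2=1$ combinations $F_2(1,1,1)-e\,F_2(1,1,e)$. With this in hand, conditions (\ref{c:c}) and (\ref{c:d}) will follow from Lemma~\ref{L:gen calM} and Lemma~\ref{L:finite index SNF} applied to a suitable rescaling of $B$.

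The only real nuisance is integrality. For each $F\in B$ the non-constant Fourier coefficients $\sigma_{k-1}(\chi_1,\chi_2,n)$ lie in $\OO_K$ for the cyclotomic field $K=\QQ(\zeta_{N_1N_2})$, since character values are roots of unity and $d^{k-1}\in\ZZ$; only the constant term, involving the generalized Bernoulli number $B_{k,\chi_1}/(2k)$ (or in the exceptional case $(e-1)/24$), can fail to be integral. Multiplying each $F\in B$ by the positive integer denominator of its constant term produces a spanning set $B'\subseteq E_k(\Gamma,\OO_{K_F})$ whose $q$-expansions we can still compute to arbitrary precision. Lemma~\ref{L:gen calM} applied to $B'$ then yields a subset of $E_k(\Gamma,\ZZ)$ spanning $E_k(\Gamma)$ and generating a finite index subgroup of $E_k(\Gamma,\ZZ)$, after which Lemma~\ref{L:finite index SNF} delivers the required $\ZZ$-basis with computable $q$-expansions. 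The main obstacle is therefore just the bookkeeping around these constant-term denominators (and the exceptional $k=2$ case); once that scaling is performed, the verification is a direct assembly of previously established tools.
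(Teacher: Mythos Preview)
Your approach is correct, but it takes a slightly different organizational route than the paper. You factor through Lemma~\ref{L:M reduction} by verifying conditions (\ref{c:a})--(\ref{c:d}) separately for $E_k(\Gamma)$ and then combining with Proposition~\ref{P:cusp condition check}. The paper instead works directly with $M_k(\Gamma)$: conditions (\ref{c:a}) and (\ref{c:b}) are immediate for the full space (since $W_N$ and the diamond operators are \emph{defined} as automorphisms of $M_k(\Gamma)$), and for (\ref{c:c}) and (\ref{c:d}) one takes the union of the Eisenstein basis $B$ with an integral basis $B'$ of $S_k(\Gamma,\ZZ)$ coming from Proposition~\ref{P:cusp condition check}, then applies Lemmas~\ref{L:gen calM} and~\ref{L:finite index SNF} to $M_k(\Gamma)$ itself, which is $\Aut(\CC)$-stable as noted in the introduction.

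The practical difference is that the paper's route sidesteps the need to establish that $E_k(\Gamma)$ is $W_N$-stable in its own right. Your Petersson-orthogonality argument for this point is fine, but it is extra work that the direct approach avoids. Conversely, your route has the mild conceptual advantage of recording that $E_k(\Gamma)$ independently satisfies (\ref{c:a})--(\ref{c:d}), which could be useful if one ever wanted to run the algorithm on the Eisenstein subspace alone. Both arguments ultimately rest on the same ingredients: the explicit Eisenstein basis $B$, the integrality bookkeeping for constant terms, and Lemmas~\ref{L:finite index SNF} and~\ref{L:gen calM}.
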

\begin{proof}
The space $M_k(\Gamma)$ is indeed stable under the actions of $W_N$ and the diamond operators.   By Proposition~\ref{P:cusp condition check}, there is a basis $B'$ of $S_k(\Gamma,\ZZ)$ that spans $S_k(\Gamma)$ and for which we can compute arbitrarily many terms of their $q$-expansions.   The set $B\cup B'$ spans $M_k(\Gamma)$.   By Lemmas~\ref{L:gen calM} and \ref{L:finite index SNF}, we can find a basis of $M_k(\Gamma,\ZZ)$ that spans $M_k(\Gamma)$ and for which we can compute arbitrarily many terms of their $q$-expansions. 
\end{proof}

\subsection{Numerically approximations for the Atkin--Lehner action} \label{SS:numerical}

Take any modular form $h\in M_k(\Gamma_1(N),\ZZ)$ for which we can compute arbitrarily many terms of its $q$-expansion.  We now explain how we can approximate each coefficient of $h|W_N$ to arbitrary accuracy in $\CC$.  In the setting of \S\ref{SS:setup intro}, this will allow us to approximate the entries of the matrix $W$ to arbitrary accuracy.

Let $B$ be the basis of $E_k(\Gamma_1(N))$ from \S\ref{SS:Eisenstein}.  Define 
\[
B':= \bigcup_{M|N,\, d|(N/M)} \{ \alpha_d(f) : f\in \calN(k,M) \};
\]
The set $B'$ is a basis of $S_k(\Gamma_1(N))$ from (\ref{SS:Eisenstein}) and hence $B\cup B'$ is a basis of $M_k(\Gamma_1(N))$.  For each $f\in B\cup B'$, every coefficient $a_n(f)$ in $\CC$ is algebraic and can be computed to arbitrary accuracy.    Expressing $h$ in terms of the basis $B\cup B'$, it suffices to explain how for each $f\in B\cup B'$ we can approximate any coefficient of $f|W_N$ to arbitrary accuracy in $\CC$.

If $f\in B$, we can compute arbitrarily many terms of the $q$-expansion of $f|W_N$, cf.~ \S2.2 of \cite{MR3889557}.  Finally consider any element in $B'$; it is of the form $\alpha_d(f)$ with $f\in \calN(k,M)$.   We can compute any coefficient of $\alpha_d(f)$ to arbitrary accuracy in $\CC$ by using Lemma~\ref{L:W action on eigenbasis} and Proposition~\ref{P:W action on newforms} (approximations of $\lambda_M(f)$ can be found as in \S\ref{SS:pseudo approx}).

 \section{Verification of the algorithm} \label{S:verification}
 We now verify the validity of the algorithm from \S\ref{SS:the algorithm}.   Recall that $Q$ is the smallest positive divisor of $N$ for which the action of $\ang{d}$ on $\calM$ depends only on the value of $d$ modulo $Q$.    By Theorem~\ref{T:AL arithmetic}(\ref{T:AL arithmetic ii}), we find that $\ang{d}$ acts on $\calM(\ZZ)$ with inverse $\ang{d^{-1}}$.    Therefore, each $D_d$ lies in $\GL_g(\ZZ)$.
 
\begin{lemma} \label{L:verification B}
For any $f\in \calM(\ZZ)$, the modular form $B_{k,N}\cdot f|W_N$ has Fourier coefficients in $\ZZ[\zeta_Q]$.
\end{lemma}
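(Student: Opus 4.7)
The plan is to combine the rationality/integrality results already established. Since $f\in \calM(\ZZ) \subseteq M_k(\Gamma_1(N), \ZZ[1/N,\zeta_N])$, Theorem~\ref{T:AL arithmetic}(\ref{T:AL arithmetic i}) applied with $B=\ZZ[1/N,\zeta_N]$ gives $f|W_N \in M_k(\Gamma_1(N), \ZZ[1/N,\zeta_N])$. In particular the Fourier coefficients of $f|W_N$ lie in $\QQ(\zeta_N)$, and by Theorem~\ref{T:integrality}(\ref{T:integrality i}) the coefficients of $B_{k,N}\cdot f|W_N$ are algebraic integers. Since the ring of integers of $\QQ(\zeta_N)$ is $\ZZ[\zeta_N]$, this already forces $B_{k,N}\cdot f|W_N \in M_k(\Gamma_1(N), \ZZ[\zeta_N])$. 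The remaining task is to sharpen the field of definition from $\QQ(\zeta_N)$ down to $\QQ(\zeta_Q)$.

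For this I would use Theorem~\ref{T:AL arithmetic}(\ref{T:AL arithmetic iii}). Pick any $\sigma \in \Aut(\CC)$. Since the Fourier coefficients of $f$ are in $\ZZ$, we have $\sigma(f)=f$, so
\[
\sigma(f|W_N) = (\sigma(f)|W_N)|\ang{\chi_N(\sigma)} = (f|W_N)|\ang{\chi_N(\sigma)}.
\]
By condition (\ref{c:a}), $f|W_N$ lies in $\calM$, and by definition of $Q$ the diamond operator $\ang{d}$ acts trivially on $\calM$ whenever $d\equiv 1\pmod{Q}$. Thus whenever $\chi_N(\sigma)\equiv 1\pmod{Q}$, we conclude that $\sigma$ fixes every Fourier coefficient of $f|W_N$.

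Under the identification $\Gal(\QQ(\zeta_N)/\QQ) \xrightarrow{\sim} (\ZZ/N\ZZ)^\times$ given by $\chi_N$, the subgroup $\{d : d\equiv 1\pmod Q\}$ has fixed field exactly $\QQ(\zeta_Q)$. Since the coefficients of $f|W_N$ already lie in $\QQ(\zeta_N)$, the previous paragraph shows they must in fact lie in $\QQ(\zeta_Q)$. Combining with the integrality statement $B_{k,N}\cdot f|W_N\in M_k(\Gamma_1(N), \ZZ[\zeta_N])$ and using that $\ZZ[\zeta_N]\cap \QQ(\zeta_Q) = \ZZ[\zeta_Q]$, we obtain $B_{k,N}\cdot f|W_N \in M_k(\Gamma_1(N), \ZZ[\zeta_Q])$, as required.

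There is no real obstacle here beyond correctly invoking the previously proved results; the only subtle point is the descent step using Theorem~\ref{T:AL arithmetic}(\ref{T:AL arithmetic iii}), and this works because $f$ has \emph{rational} Fourier coefficients so that the twist by $\sigma$ on the right-hand side becomes trivial.
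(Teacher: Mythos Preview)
Your proof is correct and follows essentially the same route as the paper: use Theorem~\ref{T:AL arithmetic}(\ref{T:AL arithmetic iii}) together with $\sigma(f)=f$ and the fact that $f|W_N\in\calM$ (so the diamond action only depends on $d\bmod Q$) to force the coefficients of $f|W_N$ into $\QQ(\zeta_Q)$, then invoke Theorem~\ref{T:integrality}(\ref{T:integrality i}) for integrality. Your preliminary appeal to Theorem~\ref{T:AL arithmetic}(\ref{T:AL arithmetic i}) to land in $\QQ(\zeta_N)$ is harmless but unnecessary---the paper works directly with arbitrary $\sigma\in\Aut(\CC)$ fixing $\QQ(\zeta_Q)$, which already pins down the field without the intermediate step.
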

\begin{proof}
Fix a modular form $f\in \calM(\ZZ)$.   Take any automorphism $\sigma$ of $\CC$ that fixes $\QQ(\zeta_Q)$.  By Theorem~\ref{T:AL arithmetic}(\ref{T:AL arithmetic iii}) and using that $f$ has rational  coefficients, we find that 
\[
\sigma(f|W_N) =  (f|W_N)|\ang{\chi_N(\sigma)} =f|W_N,
\]
where the last equality uses that $f|W_N\in \calM$ and that $\chi_N(\sigma)\equiv 1 \pmod{Q}$ since $\sigma$ fixes $\zeta_Q$.   We deduce that $f|W_N$, and hence also $B_{k,N}\cdot f|W_N$, has coefficients in $\QQ(\zeta_Q)$ since $\sigma$ is an arbitrary automorphism of $\CC$ that fixes $\QQ(\zeta_Q)$.   By Theorem~\ref{T:integrality}(\ref{T:integrality i}), the Fourier coefficients of $B_{k,N}\cdot f|W_N$ are algebraic integers.  We deduce that $B_{k,N}\cdot f|W_N$ has coefficients in $\ZZ[\zeta_Q]$ which is the ring of integers of $\QQ(\zeta_Q)$.  
\end{proof}

\begin{lemma} \label{L:verification alpha}
For any $f\in \calM(\ZZ[\zeta_Q])$, we have $\alpha f= b_1 f_1 + \cdots b_g f_g$ with $b_i \in \ZZ[\zeta_Q]$.
\end{lemma}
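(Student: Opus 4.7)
The plan is to use the change of basis from $f_1,\ldots,f_g$ to the Hermite normal form basis and then read off integrality constraints column-by-column from the HNF structure.

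First, I would replace $f_1,\ldots,f_g$ by the basis $g_1,\ldots,g_g$ of $\calM(\ZZ)$ obtained via $g_i = \sum_j U_{ij} f_j$, so that the matrix of $q$-expansion coefficients of $g_1,\ldots,g_g$ (truncated to $n$ terms) is $H = UA$. Since $U \in \SL_g(\ZZ)$, it suffices to prove the analogous statement with $g_1,\ldots,g_g$ in place of $f_1,\ldots,f_g$: if we show $\alpha f = \sum d_i g_i$ with $d_i \in \ZZ[\zeta_Q]$, then expanding $g_i$ in terms of $f_j$ via the integer matrix $U$ yields the desired conclusion.

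Let the pivot of row $i$ of $H$ occur in column $c_i$ with positive pivot value $h_i$, so $c_1 < c_2 < \cdots < c_g$ and $\alpha = h_1 h_2 \cdots h_g$. For $f \in \calM(\ZZ[\zeta_Q])$, write $f = \sum_{i=1}^g d_i g_i$ with $d_i \in \CC$. Comparing coefficients at $q^{c_j - 1}$ and using that $H$ is in Hermite normal form (upper triangular with the entries below the pivot of row $i$ equal to $0$ in column $c_i$), we obtain
\[
a_{c_j - 1}(f) = \sum_{i=1}^{j} H_{i, c_j}\, d_i = h_j d_j + \sum_{i=1}^{j-1} H_{i, c_j}\, d_i,
\]
where $H_{j, c_j} = h_j$ and $H_{i, c_j} \in \ZZ$ for all $i \leq j$. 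I would then prove by induction on $j$ that $h_1 h_2 \cdots h_j \, d_j \in \ZZ[\zeta_Q]$. The base case $j=1$ is immediate from $h_1 d_1 = a_{c_1 - 1}(f) \in \ZZ[\zeta_Q]$. For the inductive step, multiply the above identity by $h_1 \cdots h_{j-1}$:
\[
h_1 \cdots h_j\, d_j = h_1 \cdots h_{j-1} \, a_{c_j - 1}(f) - \sum_{i=1}^{j-1} H_{i,c_j} \cdot \frac{h_1 \cdots h_{j-1}}{h_1 \cdots h_i}\cdot (h_1 \cdots h_i d_i),
\]
and the right-hand side is in $\ZZ[\zeta_Q]$ by the inductive hypothesis and $H_{i,c_j} \in \ZZ$.

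Since $h_1 \cdots h_j$ divides $\alpha$ in $\ZZ$, we conclude $\alpha d_j \in \ZZ[\zeta_Q]$ for every $j$, and hence $\alpha f = \sum_j (\alpha d_j)\, g_j$ has coefficients (in the $g_i$-basis) in $\ZZ[\zeta_Q]$. Transporting back via $U \in \SL_g(\ZZ)$ then gives $\alpha f = \sum_i b_i f_i$ with $b_i \in \ZZ[\zeta_Q]$. There is no real obstacle here; the only subtlety is making sure one uses the HNF structure correctly so that the induction actually works, which it does because the pivots $c_1 < c_2 < \cdots$ are strictly increasing and the subdiagonal part of each pivot column vanishes.
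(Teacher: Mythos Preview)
Your proposal is correct and follows essentially the same approach as the paper: both reduce to the Hermite-normal-form basis, then use the upper-triangular pivot structure to prove by induction on $j$ that $h_1\cdots h_j\, d_j \in \ZZ[\zeta_Q]$ (the paper phrases this as a minimal-counterexample argument, but it is the same induction). The only cosmetic difference is that the paper speaks of the ``leading coefficient'' of $\sum_{i\geq j} (\cdots) f_i$, whereas you index the pivot columns explicitly; these are equivalent.
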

\begin{proof}
There is no harm in changing the basis $f_1,\ldots, f_g$ of $\calM(\ZZ)$.   In particular, we may assume $f_1,\ldots, f_g$ are chosen so that the matrix $A$ of \S\ref{SS:the algorithm} is in Hermite normal form.  Let $a_j$ be the leading coefficient of the $q$-expansion of $f_j$.   The pivots of the matrix $A$ are $a_1,\ldots, a_g$ and hence $\alpha=a_1\cdots a_g$.    We have $ f= c_1 f_1 + \cdots + c_g f_g$ for unique $c_i \in \QQ(\zeta_Q)$.

We claim that $a_1\cdots a_i \cdot c_i$ is an element of $\ZZ[\zeta_Q]$ for all $1\leq i \leq g$.   Suppose that the claim is false and let $1\leq j \leq g$ be the minimal value for which $a_1\cdots a_j \cdot c_j \notin \ZZ[\zeta_Q]$.     Therefore,
\begin{align} \label{E:leading coefficient alpha}
\sum_{i=j}^{g} (a_1\cdots a_{j-1} \, c_i) \, f_i = a_1\cdots a_{j-1} \cdot f - \sum_{i=1}^{j-1} (a_1\cdots a_{j-1} \, c_i) \, f_i
\end{align}
has coefficients in $\ZZ[\zeta_Q]$, where we have used that $f$ and the $f_i$ have coefficients in $\ZZ[\zeta_Q]$ and the minimality of $j$.  The leading coefficients of (\ref{E:leading coefficient alpha}) is $a_1\cdots a_{j-1} c_j \cdot a_j$ since we have chosen the basis $f_1,\ldots, f_g$ so that $A$ is in Hermite normal form.  So $a_1\cdots a_j \cdot c_j \in \ZZ[\zeta_Q]$ which contradicts the choice of $j$ and thus proves the claim.

By the above claim, we have $\alpha c_i \in \ZZ[\zeta_Q]$ for all $1\leq i \leq g$ since $\alpha=a_1\cdots a_g$.  This proves the lemma with $b_i=\alpha c_i$.
\end{proof}

\begin{lemma} \label{L:integrality W matrix}
The matrix $B_{k,N} \alpha\cdot W$ lies in $M_g(\ZZ[\zeta_Q])$.
\end{lemma}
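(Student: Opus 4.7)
The plan is to combine the two preceding lemmas. First, I would apply Lemma~\ref{L:verification B} to each basis element $f_j \in \calM(\ZZ)$ to conclude that $B_{k,N}\cdot f_j|W_N$ has $q$-expansion coefficients in $\ZZ[\zeta_Q]$. Since $f_j|W_N\in\calM$ (by stability condition (\ref{c:a})), this places $B_{k,N}\cdot f_j|W_N$ in $\calM(\ZZ[\zeta_Q])$.

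Next, I would feed each $B_{k,N}\cdot f_j|W_N$ into Lemma~\ref{L:verification alpha}, which yields $\alpha\cdot B_{k,N}\cdot f_j|W_N = \sum_{k=1}^g b_{j,k}\, f_k$ for some $b_{j,k}\in \ZZ[\zeta_Q]$. On the other hand, by the definition of $W$ we have
\[
B_{k,N}\alpha\cdot f_j|W_N \;=\; \sum_{k=1}^g \bigl(B_{k,N}\alpha\cdot W_{j,k}\bigr)\, f_k,
\]
and the uniqueness of the expansion of an element of $\calM$ in the basis $f_1,\ldots,f_g$ forces $B_{k,N}\alpha\cdot W_{j,k} = b_{j,k} \in \ZZ[\zeta_Q]$ for every pair $(j,k)$, which is the claim.

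There is essentially no obstacle here: the lemma is a direct corollary of the previous two, which together supply (i) integrality of the coefficients of $B_{k,N}\cdot f_j|W_N$ in $\ZZ[\zeta_Q]$ and (ii) the fact that multiplication by $\alpha$ clears denominators when one expands an element of $\calM(\ZZ[\zeta_Q])$ in the basis $f_1,\ldots,f_g$. The only mild care needed is to invoke stability of $\calM$ under $W_N$ (condition (\ref{c:a})) so that $f_j|W_N$ actually lies in $\calM$, allowing Lemma~\ref{L:verification alpha} to apply.
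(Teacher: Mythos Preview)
Your proposal is correct and follows essentially the same approach as the paper: apply Lemma~\ref{L:verification B} together with stability of $\calM$ under $W_N$ to get $B_{k,N}\cdot f_j|W_N\in\calM(\ZZ[\zeta_Q])$, then apply Lemma~\ref{L:verification alpha} and compare with the defining relation for $W$ to conclude $B_{k,N}\alpha\cdot W_{j,k}\in\ZZ[\zeta_Q]$.
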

\begin{proof}
Take any $1\leq j \leq g$.  By Lemma~\ref{L:verification B} and our assumption that $\calM$ is stable under the action of $W_N$, we have $B_{k,N}\cdot f_j|W_N \in \calM(\ZZ[\zeta_Q])$.   By Lemma~\ref{L:verification alpha}, we deduce that $B_{k,N} \alpha\cdot f_j|W_N = \sum_{i=1}^g b_{j,i} f_i$ with $b_{j,i} \in \ZZ[\zeta_Q]$.  From the definition of the matrix $W$, we find that $B_{k,N}\alpha\cdot W_{j,i} = b_{j,i}$ for all $1\leq i \leq g$.  Since $j$ was arbitrary, we deduce that the entries of $B_{k,N}\alpha\cdot W$ lies in $\ZZ[\zeta_Q]$.
\end{proof}

We now describe the natural Galois action on the matrix $W$.

\begin{lemma} \label{L:sigma W D}
For each $\sigma\in \Gal(\QQ(\zeta_Q)/\QQ)$, we have $\sigma(W)=  W\cdot D_{\chi_Q(\sigma)}$.
\end{lemma}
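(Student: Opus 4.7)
The plan is to derive the identity directly by applying $\sigma$ to the defining relation of $W$ and comparing with the action computed via Theorem~\ref{T:AL arithmetic}(\ref{T:AL arithmetic iii}).

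First I would extend $\sigma \in \Gal(\QQ(\zeta_Q)/\QQ)$ to some automorphism, still called $\sigma$, of $\CC$. Since the diamond operator action on $\calM$ depends only on the residue modulo $Q$, the element $\ang{\chi_N(\sigma)}$ coincides with $\ang{\chi_Q(\sigma)}$ regardless of how the extension is chosen, so the outcome will not depend on this choice.

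Next I would apply $\sigma$ to the defining identity $f_j|W_N = \sum_{k=1}^g W_{j,k}\, f_k$. Since each basis element $f_k$ lies in $\calM(\ZZ)$ and therefore has $\sigma$-fixed $q$-expansion, the right-hand side becomes $\sum_k \sigma(W_{j,k})\, f_k$. For the left-hand side, I would invoke Theorem~\ref{T:AL arithmetic}(\ref{T:AL arithmetic iii}) together with $\sigma(f_j)=f_j$ to obtain
\[
\sigma(f_j|W_N) = (\sigma(f_j)|W_N)|\ang{\chi_N(\sigma)} = (f_j|W_N)|\ang{\chi_Q(\sigma)}.
\]
Substituting $f_j|W_N = \sum_k W_{j,k}f_k$ and expanding via the definition of the diamond matrix, $f_k|\ang{\chi_Q(\sigma)} = \sum_\ell (D_{\chi_Q(\sigma)})_{k,\ell}\, f_\ell$, the left-hand side becomes $\sum_\ell (W \cdot D_{\chi_Q(\sigma)})_{j,\ell}\, f_\ell$.

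Finally, comparing the two expressions in the basis $f_1,\ldots, f_g$ of $\calM$ yields $\sigma(W_{j,\ell}) = (W \cdot D_{\chi_Q(\sigma)})_{j,\ell}$ for all $j,\ell$, which is the claimed matrix identity $\sigma(W) = W \cdot D_{\chi_Q(\sigma)}$. There is no real obstacle here: the one subtlety worth being careful about is confirming that the $Q$-reduction of $\chi_N(\sigma)$ really is $\chi_Q(\sigma)$ and that this intertwines with condition (\ref{c:b}) so that $\ang{\chi_N(\sigma)}$ is well-defined on $\calM$ purely in terms of $\sigma|_{\QQ(\zeta_Q)}$; this follows immediately from the definition of $Q$.
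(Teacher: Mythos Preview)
Your proposal is correct and follows essentially the same argument as the paper: apply $\sigma$ to the defining relation $f_j|W_N=\sum_k W_{j,k}f_k$, use that the $f_k$ have integer coefficients so $\sigma(f_k)=f_k$, invoke Theorem~\ref{T:AL arithmetic}(\ref{T:AL arithmetic iii}) to rewrite $\sigma(f_j|W_N)$ as $(f_j|W_N)|\ang{\chi_N(\sigma)}$, expand via the definition of $D_d$, and compare coefficients. Your explicit remark about extending $\sigma$ from $\Gal(\QQ(\zeta_Q)/\QQ)$ to $\Aut(\CC)$ and checking that only $\chi_Q(\sigma)$ matters is a point the paper leaves implicit, so if anything your write-up is slightly more careful there.
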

\begin{proof}
Take any $1\leq j \leq g$.   By the definition of the matrix $W$, we have $f_j|W_N = \sum_{k=1}^g  W_{j,k}\cdot f_k$.  Therefore, 
\[
\sigma(f_j|W_N) = \sum_{k=1}^g  \sigma(W_{j,k})\cdot \sigma(f_k)= \sum_{k=1}^g  \sigma(W_{j,k})\cdot f_k,
\]
where we have used that each $f_k$ has coefficients in $\ZZ$.   Set $d:=\chi_N(\sigma) \in (\ZZ/N\ZZ)^\times$.  By Theorem~\ref{T:AL arithmetic}(\ref{T:AL arithmetic iii}) and using that $f_j$ has integer coefficients, we have $\sigma(f_j|W_N) = (f_j|W_N)|\ang{d}$.   Using the definition of $W$ and $D_d$, we deduce that
\begin{align*}
\sigma(f_j|W_N) = (f_j|W_N)|\ang{d}  = \sum_{i=1}^g  W_{j,i}\cdot f_i|\ang{d} = \sum_{i=1}^g  W_{j,i}\cdot \sum_{k=1}^g (D_d)_{i,k}\cdot f_k = \sum_{k=1}^g  (W D_d)_{j,k}\cdot f_k.
\end{align*}
By comparing our two expressions for $\sigma(f_j|W_N)$, we find that $\sigma(W)_{j,k}=(W D_d)_{j,k}$ for all $1\leq k \leq g$.   Since $1\leq j \leq g$ was arbitrary, we conclude that $\sigma(W)= W D_d$.  Finally, we note that $D_d$ depends only on $d$ modulo $Q$ and $d\equiv \chi_Q(\sigma) \pmod{Q}$.
\end{proof}

We have $W\in M_g(\QQ(\zeta_Q))$ by Lemma~\ref{L:integrality W matrix}.  For each integer $0 \leq b \leq \varphi(Q)-1$, define the matrix  
\begin{align} \label{E:betab new}
\beta_b:= \Tr_{\QQ(\zeta_Q)/\QQ}( \zeta_Q^b\,  W ) \in M_g(\QQ).
\end{align}
By Lemma~\ref{L:integrality W matrix}, we have $B_{k,N}\alpha \, W \in M_g(\ZZ[\zeta_Q])$ and hence $B_{k,N} \alpha\cdot \beta_b \in M_g(\ZZ)$.  We have
\begin{align*}
\beta_b = \sum_{\sigma \in \Gal(\QQ(\zeta_Q)/\QQ)} \sigma(\zeta_Q)^b \cdot \sigma(W) =\sum_{\sigma \in \Gal(\QQ(\zeta_Q)/\QQ)}  \zeta_Q^{\chi_Q(\sigma)\cdot b}\cdot W \cdot D_{\chi_Q(\sigma)},
\end{align*}
where the last equality uses Lemma~\ref{L:sigma W D}.  Therefore,
\[
\beta_b =  W \cdot \sum_{d\in (\ZZ/Q\ZZ)^\times} \zeta_Q^{db} \, D_d
\]
which agrees with the definition of $\beta_b$ given in \S\ref{SS:the algorithm}.  

In \S\ref{SS:numerical}, we explained how to numerical approximate the matrices $W$ and $D_d$ in $M_g(\CC)$.  Since $D_d$ has integer entries, it can be determined by a sufficiently accurate approximation.   So an approximation of $W$ gives an approximation of the matrix $\beta_b$.   Since $N_{k,N}\alpha\, \beta_b$ has integer entries, we can thus determine $\beta_b$ from a sufficiently accurate approximation of $W$ in $M_g(\CC)$.

Finally, in \S\ref{SS:the algorithm}, we observed that $W$ is the unique matrix in $M_g(\QQ(\zeta_Q))$ that satisfies $\Tr_{\QQ(\zeta_Q)/\QQ}(\zeta_Q^b W)= \beta_b$ for all $0\leq b \leq \varphi(Q)-1$.  Moreover, it is straightforward to compute $W$ given the matrices $\beta_b$.

\section{Modular curves} \label{S:modular curves}

Fix a positive integer $N\geq 1$.   The group $\SL_2(\ZZ)$ acts on the upper half plane $\mathfrak{H}$ via linear fractional transformations.   The quotient $\Gamma(N)\backslash \mathfrak{H}$ is a Riemann surface and can be completed to a compact and smooth Riemann surface $\calX_N$. 

Every meromorphic function $f$ on $\calX_N$ has a $q$-expansion  $\sum_{n\in \ZZ} c_n(f) q_N^{n}$, where the $c_n(f)\in \CC$ are $0$ for all but finitely many negative $n \in \ZZ$.  Let $\calF_N$ be the field consisting of all meromorphic functions $f$ on $\calX_N$ for which $c_n(f)$ lies in $\QQ(\zeta_N)$ for all $n$.  For example, $\calF_1=\QQ(j)$, where $j$ is the modular $j$-invariant.

\begin{lemma} \label{L:Shimura basic}
There is a unique right action $*$ of $\GL_2(\ZZ/N\ZZ)$ on the field $\calF_N$ such that the following hold for all $f\in \calF_N$:
\begin{alphenum}
\item
For $A\in \SL_2(\ZZ/N\ZZ)$, we have $(f*A)(\tau) = f(\gamma\tau)$, where $\gamma\in \SL_2(\ZZ)$ is any matrix congruent to $A$ modulo $N$.
\item
For $A=\left(\begin{smallmatrix}1 & 0 \\0 & d\end{smallmatrix}\right) \in \GL_2(\ZZ/N\ZZ)$, the $q$-expansion of $f*A$ is $\sum_{n\in \ZZ} \sigma_d(c_n(f)) q_N^{n}$, where $\sigma_d$ is the automorphism of the field $\QQ(\zeta_N)$ that satisfies $\sigma_d(\zeta_N)=\zeta_N^d$.
\end{alphenum}
\end{lemma}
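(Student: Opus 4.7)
The plan is to establish uniqueness first, then construct the action by specifying it on the generating subgroups $\SL_2(\ZZ/N\ZZ)$ and the diagonal subgroup $D_N := \{\left(\begin{smallmatrix}1 & 0 \\ 0 & d\end{smallmatrix}\right) : d \in (\ZZ/N\ZZ)^\times\}$, and finally verify the compatibility relations needed to assemble these into a right action of all of $\GL_2(\ZZ/N\ZZ)$.

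For uniqueness, any matrix $A\in \GL_2(\ZZ/N\ZZ)$ with $\det A = d$ factors as $A = \gamma \cdot \left(\begin{smallmatrix}1 & 0 \\ 0 & d\end{smallmatrix}\right)$ with $\gamma\in \SL_2(\ZZ/N\ZZ)$. A right action must then satisfy $f*A = (f*\gamma)*\left(\begin{smallmatrix}1 & 0 \\ 0 & d\end{smallmatrix}\right)$, so conditions (a) and (b) determine the action completely.

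For existence, I would first define the $\SL_2(\ZZ/N\ZZ)$-action by the formula in (a). Well-definedness follows because any two lifts $\gamma,\gamma' \in \SL_2(\ZZ)$ of $A$ differ by an element of $\Gamma(N)$, and every $f\in \calF_N$ is $\Gamma(N)$-invariant. Since $\Gamma(N)$ is normal in $\SL_2(\ZZ)$, the function $\tau\mapsto f(\gamma\tau)$ is also $\Gamma(N)$-invariant and hence meromorphic on $\calX_N$. Next I would define the action of $D = \left(\begin{smallmatrix}1 & 0 \\ 0 & d\end{smallmatrix}\right)$ by applying $\sigma_d$ coefficientwise to the $q$-expansion as in (b). The substantive content here is twofold: (i) that the resulting formal series actually comes from a meromorphic function on $\calX_N$ (rather than merely a formal Laurent series), and (ii) that both constructions preserve the rationality condition defining $\calF_N$. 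Both points follow from the $q$-expansion principle for modular curves of level $N$: the field $\calF_N$ is the function field of the canonical model $X(N)_{\QQ(\zeta_N)}$, so $\Gal(\QQ(\zeta_N)/\QQ)$ acts by base change on functions, with the explicit description on $q$-expansions given by (b). This is the content of Shimura's theory (Chapter~6 of \emph{Introduction to the Arithmetic Theory of Automorphic Functions}), and the rationality of $f\mapsto f(\gamma\tau)$ for $\gamma\in \SL_2(\ZZ)$ can also be extracted from the Katz-style algebraic theory used in the proof of Theorem~\ref{T:AL arithmetic}\eqref{T:AL arithmetic i}.

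The remaining step is to verify that the two partial actions extend consistently to all of $\GL_2(\ZZ/N\ZZ)$. Using the factorization $A = \gamma\cdot \left(\begin{smallmatrix}1 & 0 \\ 0 & \det A\end{smallmatrix}\right)$, I would define $f*A := (f*\gamma)*\left(\begin{smallmatrix}1 & 0 \\ 0 & \det A\end{smallmatrix}\right)$ and then check $(f*A)*A' = f*(AA')$ for all $A,A'$. Writing $A' = \gamma'\cdot D'$, the identity
\[
AA' = \bigl(\gamma \cdot D\gamma' D^{-1}\bigr)\cdot (DD'),
\]
with $D\gamma' D^{-1} \in \SL_2(\ZZ/N\ZZ)$ and $DD'$ diagonal, reduces the entire check to the single key compatibility
\[
(f*D)*\gamma' \;=\; \bigl(f*(D\gamma' D^{-1})\bigr)*D
\]
for $\gamma'\in \SL_2(\ZZ/N\ZZ)$ and $D = \left(\begin{smallmatrix}1 & 0 \\ 0 & d\end{smallmatrix}\right)$. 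This is the main obstacle: it asserts that applying $\sigma_d$ to coefficients intertwines the $\SL_2$-action up to conjugation by $D$. It can be verified either by directly comparing $q$-expansions after lifting $\gamma'$ to $\SL_2(\ZZ)$ and $D\gamma' D^{-1}$ to a matching lift, or more conceptually by invoking the identification of $\calF_N$ with the function field of $X(N)_\QQ$ so that the $\GL_2(\ZZ/N\ZZ)$-action becomes the geometric action on level structures, with $D$ corresponding to changing the $\mu_N$-trivialization by raising to the $d$-th power. Either way, the relation reduces to a tautology once the correct intertwining of "arithmetic" level structures under the Galois action is recorded, exactly in parallel with Theorem~\ref{T:AL arithmetic}\eqref{T:AL arithmetic iii}.
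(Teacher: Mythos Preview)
Your outline is correct and is essentially an unpacking of what the paper does: the paper's entire proof is a citation to Theorem~6.6 and Proposition~6.9 of Shimura's \emph{Introduction to the Arithmetic Theory of Automorphic Functions}, and you have sketched the standard argument that underlies those results (factorization $A=\gamma\left(\begin{smallmatrix}1&0\\0&\det A\end{smallmatrix}\right)$, separate definitions on $\SL_2$ and the diagonal torus, and the conjugation compatibility $(f*D)*\gamma'=(f*(D\gamma'D^{-1}))*D$), even citing the same chapter of Shimura. The only caveat is that the two points you flag as ``substantive''---that $\sigma_d$ applied to a $q$-expansion again yields a genuine function in $\calF_N$, and that $f\mapsto f(\gamma\tau)$ preserves $\QQ(\zeta_N)$-rationality of coefficients---are exactly the content you end up deferring to Shimura or to the Katz-style theory, so your write-up is really an annotated roadmap to the same reference rather than an independent proof; but that is all the paper offers as well.
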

\begin{proof}
This follows from Theorem~6.6 and Proposition~6.9 of \cite{MR1291394}.
\end{proof}

For a subgroup $G$ of $\GL_2(\ZZ/N\ZZ)$, let $\calF_N^G$ be the subfield of $\calF_N$ fixed by $G$ under the action of Lemma~\ref{L:Shimura basic}.

\begin{lemma} \label{L:FN basics}
\begin{romanenum}
\item \label{L:FN basics i}
The matrix $-I$ acts trivially on $\calF_N$ and the right action of $\GL_2(\ZZ/N\ZZ)/\{\pm I\}$ on $\calF_N$ is faithful.
\item \label{L:FN basics ii}
We have $\calF_N^{\GL_2(\ZZ/N\ZZ)}=\calF_1=\QQ(j)$ and $\calF_N^{\SL_2(\ZZ/N\ZZ)}=\QQ(\zeta_N)(j)$.
\item \label{L:FN basics iii}
The field $\QQ(\zeta_N)$ is algebraically closed in $\calF_N$.
\end{romanenum}
\end{lemma}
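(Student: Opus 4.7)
The plan is to leverage Shimura's main theorem on modular function fields (see \S6 of \cite{MR1291394}, which already underlies Lemma~\ref{L:Shimura basic}) asserting that $\calF_N/\calF_1$ is Galois with Galois group $\GL_2(\ZZ/N\ZZ)/\{\pm I\}$ acting exactly as described. Granted this, the three parts become essentially formal consequences, with the third requiring an additional geometric observation. For part~(\ref{L:FN basics i}), the matrix $-I \in \SL_2(\ZZ)$ acts as the identity on $\mathfrak{H}$ by fractional linear transformations, so property~(a) of Lemma~\ref{L:Shimura basic} gives $f*(-I) = f$ for every $f \in \calF_N$, with no twist on coefficients since $\det(-I)=1$. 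For faithfulness of the induced action of $\GL_2(\ZZ/N\ZZ)/\{\pm I\}$, I would suppose $A \in \GL_2(\ZZ/N\ZZ)$ acts trivially, factor $A = A_1 \cdot \left(\begin{smallmatrix}1 & 0 \\ 0 & \det A\end{smallmatrix}\right)$ with $A_1 \in \SL_2(\ZZ/N\ZZ)$, and test triviality on the constant function $\zeta_N \in \calF_N$; property~(b) then forces $\det A = 1$, and the remaining faithfulness of $\SL_2(\ZZ/N\ZZ)/\{\pm I\}$ on $\calF_N$ is precisely Shimura's Galois statement.

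For part~(\ref{L:FN basics ii}), the inclusions $\QQ(j) \subseteq \calF_N^{\GL_2(\ZZ/N\ZZ)}$ and $\QQ(\zeta_N)(j) \subseteq \calF_N^{\SL_2(\ZZ/N\ZZ)}$ are immediate from the definitions: $j \in \calF_1$ is globally fixed, while elements of $\SL_2(\ZZ/N\ZZ)$ have trivial determinant and hence fix $\zeta_N$ by property~(b). Shimura's theorem gives $[\calF_N : \calF_1] = |\GL_2(\ZZ/N\ZZ)/\{\pm I\}|$, and since the determinant map realizes $\SL_2(\ZZ/N\ZZ)/\{\pm I\}$ as a subgroup of index $\varphi(N)$, Galois theory yields $[\calF_N^{\SL_2(\ZZ/N\ZZ)} : \calF_1] = \varphi(N)$. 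Since $j$ is transcendental over $\QQ$ and hence over $\QQ(\zeta_N)$, one has $[\QQ(\zeta_N)(j) : \QQ(j)] = \varphi(N)$ as well, and comparing degrees forces both inclusions to be equalities.

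For part~(\ref{L:FN basics iii}), I expect this to be the main obstacle, since algebraic closedness in $\calF_N$ does not reduce formally to algebraic closedness in the smaller field $\QQ(\zeta_N)(j)$ from part~(\ref{L:FN basics ii}). The argument I would give is geometric: any $\alpha \in \calF_N$ is in particular a meromorphic function on the connected compact Riemann surface $\calX_N$, whose full meromorphic function field has $\CC$ as its field of constants. If $\alpha$ is algebraic over $\QQ(\zeta_N)$, then it is a fortiori algebraic over $\CC$, hence a constant function; its $q$-expansion then reduces to the $c_0$ term alone, and the defining condition for $\calF_N$ forces $\alpha = c_0 \in \QQ(\zeta_N)$.
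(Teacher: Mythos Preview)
Your proposal is correct and follows essentially the same approach as the paper, which simply cites Theorem~6.6 and Proposition~6.9 of \cite{MR1291394}. You have merely unpacked those citations: parts~(\ref{L:FN basics i}) and~(\ref{L:FN basics ii}) are formal consequences of Shimura's identification of $\Gal(\calF_N/\calF_1)$ with $\GL_2(\ZZ/N\ZZ)/\{\pm I\}$, and your direct argument for part~(\ref{L:FN basics iii}) via constants on the compact Riemann surface $\calX_N$ is exactly the geometric content behind Shimura's statement that $\calF_N$ is the function field of a geometrically irreducible curve over $\QQ(\zeta_N)$.
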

\begin{proof}
This also follows from Theorem~6.6 and Proposition~6.9 of \cite{MR1291394}.
\end{proof}

Let $G$ be a subgroup of $\GL_2(\ZZ/N\ZZ)$ that satisfies $\det(G)=(\ZZ/N\ZZ)^\times$ and $-I\in G$.  By Lemma~\ref{L:FN basics}, the field $\calF_N^G$ has transcendence degree $1$ and $\QQ$ is algebraically closed in $\calF_N^G$.   

We define the \defi{modular curve} $X_G$ to be the smooth, projective and geometrically irreducible curve over $\QQ$ with function field $\calF_N^G$.   We can identify $X_G(\CC)$ with the compact and smooth Riemann surface that completes $\Gamma_G\backslash \mathfrak{H}$, where $\Gamma_G$ is the congruence subgroup consisting of matrices in $\SL_2(\ZZ)$ whose image modulo $N$ lies in $G$.\\

We now describe how the modular curves $X_G$ are related to understanding the Galois action on the torsion points of elliptic curves; this is given for background purposes and will not be used elsewhere in the paper.  Let $\pi_G \colon X_G \to \Spec \QQ[j] \cup \{\infty\}=\PP^1_\QQ$ be the morphism arising from the inclusion $\QQ(j) \subseteq \calF_N^G$.  In particular, we may view $\pi_G(X_G(\QQ))$ as a subset of $\QQ \cup\{\infty\}$.

Consider an elliptic curve $E/\QQ$ whose $j$-invariant we denote by $j_E\in \QQ$.  Let $E[N]$ be the $N$-torsion subgroup of $E(\Qbar)$, where $\Qbar$ is a fixed algebraic closure of $\QQ$.   The group $E[N]$ is a free $\ZZ/N\ZZ$-module of rank $2$ and has a natural action of $\Gal_\QQ:=\Gal(\Qbar/\QQ)$ that respects the group structure.  By choosing a basis for $E[N]$, the Galois action can be expressed by a representation
\[
\rho_{E,N}\colon \Gal_\QQ \to \GL_2(\ZZ/N\ZZ).
\]
The subgroup $\rho_{E,N}(\Gal_\QQ)$ of $\GL_2(\ZZ/N\ZZ)$ is uniquely defined up to conjugacy.

Let $G^t$ be the subgroup of $\GL_2(\ZZ/N\ZZ)$ obtained by taking the transpose of the elements of $G$.  Suppose further that $j_E\notin \{0,1728\}$; equivalently, the automorphism group of the elliptic curve $E_{\Qbar}$ is cyclic of order $2$.  Then $\rho_{E,N}(\Gal_\QQ)$ is conjugate in $\GL_2(\ZZ/N\ZZ)$ to a subgroup of $G^t$ if and only if $j_E$ is an element of $\pi_G(X_G(\QQ))$, cf.~Proposition~3.3 of \cite{Zywina-possible} (the transpose arises because the action in Proposition~3.1 of \cite{Zywina-possible} is slightly different than ours).  So the modular curves $X_G$, and their morphisms $\pi_G$, contain information about the images of $\rho_{E,N}$ for elliptic curves $E/\QQ$ with $j_E \notin \{0,1728\}$.

\begin{remark}
\begin{romanenum}
\item The assumptions $\det(G)=(\ZZ/N\ZZ)^\times$ and $-I\in G$ are natural in this elliptic curve setting.    We have $\det(\rho_{E,N}(\Gal_\QQ))=(\ZZ/N\ZZ)^\times$ and the group $\ang{\rho_{E,N}(\Gal_\QQ),-I}$, up to conjugacy in $\GL_2(\ZZ/N\ZZ)$, depends only on $j_E$.
\item
The occurrence of $G^t$ is due to the fact that in this paper we have natural right actions while we usually view the action of $\Gal_\QQ$ on $E[N]$ as a left action.    Sometimes in the literature, for example in \cite{Zywina-possible}, the modular curve corresponding to the group $G$ agrees with our $X_{G^t}$.   
\end{romanenum}
\end{remark}

\subsection{The canonical ring}
This section is dedicated to describing the canonical ring $R_{X_G}:= \bigoplus_{k\geq 0} H^0(X_G, \Omega_{X_G}^{\otimes k})$ of $X_G$, and in particular $H^0(X_G, \Omega_{X_G})$, in terms of cusps forms.  This is certainly well-known, but lacking a reference we give a quick demonstration.

We now fix an integer $k\geq 0$.   Take any $\omega \in H^0(X_G,\Omega^{\otimes k}_{X_G})$.   The form $\omega$ induces a differential $k$-form on $X_G(\CC)$; on $\mathfrak{H}$ it equals $(2\pi i)^{k} f(\tau) \, (d\tau)^k$ for a unique cusp form $f\in S_{2k}(\Gamma_G,\CC)$.  We define $\alpha_k(\omega):=f$.

In \S\ref{SS:intro modular curves}, we defined a right action of $\GL_2(\ZZ/N\ZZ)$ on $S_{2k}(\Gamma(N),\QQ(\zeta_N))$; we also denote it by $*$.

\begin{lemma} \label{L:alphak image}
The cusp form $\alpha_k(\omega)$ lies in $S_{2k}(\Gamma(N), \QQ(\zeta_N))^G$ for all $\omega\in H^0(X_G,\Omega^{\otimes k}_{X_G})$.
\end{lemma}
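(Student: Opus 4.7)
The lemma combines three claims: that $f := \alpha_k(\omega)$ lies in $S_{2k}(\Gamma(N))$, that its Fourier coefficients lie in $\QQ(\zeta_N)$, and that it is fixed by the right action of $G$ on $S_{2k}(\Gamma(N),\QQ(\zeta_N))$. My plan is to dispose of the analytic/$\SL_2$ portions by direct pullback to $\mathfrak{H}$, and to obtain the rationality and the ``diagonal'' invariance simultaneously by comparing the analytic $q_N$-expansion used to define $\alpha_k$ with an algebraic one coming from Katz's theory.

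First I would pull $\omega$ back along the uniformization $\mathfrak{H}\to X_G(\CC)=\Gamma_G\backslash \mathfrak{H}^*$, where $\Gamma_G=\{\gamma\in\SL_2(\ZZ):\gamma\bmod N\in G\}$. This yields a $\Gamma_G$-invariant holomorphic $k$-differential on $\mathfrak{H}$; matching the factor $(2\pi i)^k$ used in the definition of $\alpha_k$, we get $(2\pi i)^k f(\tau)\,(d\tau)^k$ with $f$ holomorphic on $\mathfrak{H}$. Regularity of $\omega$ on all of $X_G(\CC)$ (including the cusps) forces the required vanishing at the cusps, so $f\in S_{2k}(\Gamma_G)\subseteq S_{2k}(\Gamma(N))$. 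For the $\SL_2$-part of the $G$-invariance, the right action of $\SL_2(\ZZ)$ on $S_{2k}(\Gamma(N))$ factors through $\SL_2(\ZZ/N\ZZ)$ and every $\gamma\in\Gamma_G$ fixes $f$, so the subgroup $G\cap\SL_2(\ZZ/N\ZZ)=\Gamma_G/\Gamma(N)$ of $\SL_2(\ZZ/N\ZZ)$ acts trivially.

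Since $\det G=(\ZZ/N\ZZ)^\times$, the group $G$ is generated by $G\cap\SL_2(\ZZ/N\ZZ)$ together with the matrices $\left(\begin{smallmatrix}1 & 0 \\ 0 & d\end{smallmatrix}\right)$ for $d\in(\ZZ/N\ZZ)^\times$. What remains is to show that the Fourier coefficients of $f$ lie in $\QQ(\zeta_N)$ and that each such diagonal matrix fixes $f$. For this, I would invoke Katz's algebraic theory (as in \S3.6 of \cite{MR1332907}, already used in Theorem~\ref{T:AL arithmetic}): base-changing $X_G$ to $\QQ(\zeta_N)$, the canonical cusp at $i\infty$ has a formal neighborhood $\Spec\QQ(\zeta_N)\brak{q_N}$ arising from the Tate curve with its canonical level-$N$ structure (this is exactly where $\zeta_N$ enters, via the Weil-pairing trivialization $\mu_N\cong \ZZ/N\ZZ$). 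The restriction of $\omega\otimes 1\in H^0(X_{G,\QQ(\zeta_N)},\Omega^{\otimes k})$ to this neighborhood lies in $\QQ(\zeta_N)\brak{q_N}\cdot(dq_N/q_N)^k$, and its coefficients coincide with the Fourier coefficients of $f$. This gives the $\QQ(\zeta_N)$-rationality.

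Finally, because $\omega$ is defined over $\QQ$, its base change to $X_{G,\QQ(\zeta_N)}$ is fixed by $\Gal(\QQ(\zeta_N)/\QQ)$; in the algebraic $q_N$-expansion above, the Galois element $\sigma_d$ acts on coefficients exactly as described in Lemma~\ref{L:Shimura basic}(b) for $\left(\begin{smallmatrix}1 & 0 \\ 0 & d\end{smallmatrix}\right)$, which gives the invariance of $f$ under these diagonal matrices. The main obstacle is the careful bookkeeping in the analytic/algebraic $q$-expansion comparison for $k$-fold differentials — in particular matching the $(2\pi i)^k$ factor and checking that the canonical differential $dq_N/q_N$ on the Tate curve corresponds analytically to $(2\pi i)d\tau$ — but this is standard and essentially contained in the material from \cite{MR506271} already cited.
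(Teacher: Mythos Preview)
Your argument has a genuine gap in the last two paragraphs. The claim ``$G$ is generated by $G\cap\SL_2(\ZZ/N\ZZ)$ together with the matrices $\left(\begin{smallmatrix}1 & 0 \\ 0 & d\end{smallmatrix}\right)$'' is false: these diagonal matrices need not lie in $G$ at all (think of $G$ the normalizer of a non-split Cartan). Even granting invariance of $f$ under $G\cap\SL_2$ and under every $\left(\begin{smallmatrix}1 & 0 \\ 0 & d\end{smallmatrix}\right)$, you could only conclude invariance under the group these generate, which in general strictly contains $G$ but is not comparable to it in the way you need.

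More seriously, the conclusion you draw from the Galois argument --- that $f$ is fixed by every $\left(\begin{smallmatrix}1 & 0 \\ 0 & d\end{smallmatrix}\right)$, i.e.\ that $f$ has $q$-expansion coefficients in $\QQ$ --- is simply false in general. Example~1.7 of the paper exhibits a group $G\subseteq\GL_2(\ZZ/13\ZZ)$ for which the basis of $S_2(\Gamma(13),\QQ(\zeta_{13}))^G$ has coefficients genuinely in $\QQ(\zeta_{13})\setminus\QQ$. The flaw is in the sentence ``the Galois element $\sigma_d$ acts on coefficients exactly as described in Lemma~\ref{L:Shimura basic}(b)'': the canonical level-$N$ structure on the Tate curve is built from the chosen $\zeta_N$, so $\sigma_d$ does not fix the cusp $\infty$ on $X_{G,\QQ(\zeta_N)}$ (nor the parameter $q_N$) unless $\left(\begin{smallmatrix}1 & 0 \\ 0 & d\end{smallmatrix}\right)$ happens to lie in $G$ times the stabilizer of $\infty$. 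Thus Galois-invariance of $\omega\otimes 1$ does not translate into $\sigma_d$-invariance of the $q_N$-expansion of $f$.

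The paper avoids this by working entirely inside $\calF_N$: one writes $\omega=v\,(du)^k$ with $u,v\in\calF_N^G$, so that $f(\tau)=(2\pi i)^{-k}v(\tau)u'(\tau)^k$. The $\QQ(\zeta_N)$-rationality of the coefficients is then immediate from the $q$-expansions of $u$ and $v$, and for each $A\in G$ one checks $f*A=f$ directly by writing $A\equiv\gamma\left(\begin{smallmatrix}1 & 0 \\ 0 & d\end{smallmatrix}\right)\pmod{N}$ with $\gamma\in\SL_2(\ZZ)$ and using that $u*A=u$, $v*A=v$. The point is that one never separates the $\SL_2$-part and the diagonal part of $A$; the $G$-invariance of $u$ and $v$ is used for $A$ as a whole.
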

\begin{proof}
Take any $\omega \in H^0(X_G,\Omega^{\otimes k}_{X_G})$ and set $f:=\alpha_k(\omega) \in S_{2k}(\Gamma_G,\CC)$.   Choose a non-constant $u\in \calF_N^G$.   We have $\omega = v (du)^k$ for a unique modular function $v\in \calF_N^G$.   The form $\omega$ on $X_G(\CC)$ arises from the form $v(\tau) u'(\tau)^k (d\tau)^k$ on $\mathfrak{H}$.  Therefore, $f(\tau)=(2\pi i)^{-k} v(\tau) u'(\tau)^k$.

We claim that the coefficients of the $q$-expansion of $f$ lie in $\QQ(\zeta_N)$.  The coefficients of the $q$-expansion of $v$ are in $\QQ(\zeta_N)$ since $v\in \calF_N$.  So to prove the claim, it suffices to show that $(2\pi i)^{-1} u'(\tau)$ has a $q$-expansion with coefficients in $\QQ(\zeta_N)$.   Since $u\in \calF_N$, we have $u=\sum_{n \in \ZZ} c_n(u) q_N^{n}$ with $c_n(u)\in \QQ(\zeta_N)$ that are $0$ for all but finitely many negative $n$.  Therefore, $(2\pi i)^{-1} u'(\tau) = \sum_{n \in \ZZ} n/N \cdot c_n(u) q_N^{n}$ which has coefficients in $\QQ(\zeta_N)$.  This proves the claim.

Now take any $A\in G$. Set $d=\det(A)$ and choose $\gamma\in \SL_2(\ZZ)$ so that $A\equiv \gamma  \left(\begin{smallmatrix}1 & 0 \\0 & d\end{smallmatrix}\right) \pmod{N}$.  Since $u\in \calF_N^G$, we have $u(\tau)=(u*A)(\tau) = \sigma_d( u(\gamma\tau))$.   The coefficients of $u(\gamma\tau)$ lie in $\QQ(\zeta_N)$ since $u\in \calF_N$ and $\calF_N$ is stable under the right action of $\GL_2(\ZZ/N\ZZ)$.  We have $u(\gamma \tau) = \sum_n b_n q_N^{n}$ with $b_n \in \QQ(\zeta_N)$.  Taking derivatives gives
\begin{align*} 
(2\pi i)^{-1}  u'(\tau) &= (2\pi i)^{-1} \frac{d}{d\tau} \sigma_d( u(\gamma\tau))  = (2\pi i)^{-1} \frac{d}{d\tau}  \sum_n \sigma_d(b_n) q_N^{n}= \sum_n n/N\cdot \sigma_d(b_n) q_N^{n}. 
\end{align*}
Therefore,
\begin{align} \label{E:d  and sigmad commute}
(2\pi i)^{-1}  u'(\tau) = \sigma_d\Big(\sum_n n/N\cdot b_n q_N^{n}\Big)  = \sigma_d\Big((2\pi i)^{-1} \frac{d}{d\tau}u(\gamma\tau)\Big) = \sigma_d\Big( (2\pi i)^{-1} (u' |_2 \gamma)(\tau) \Big).
\end{align}
Since $v\in \calF_N^G$, we have $v(\tau) = (v*A)(\tau)=\sigma_d( v(\gamma \tau))$.  Taking the $k$-power of both sides of (\ref{E:d  and sigmad commute}) and multiplying by $v(\tau)$ gives
\[
(2\pi i)^{-k} v(\tau) u'(\tau)^k = \sigma_d\big( (2\pi i)^{-k} v(\tau) (u' |_2 \gamma)^k (\tau) \big) = \sigma_d\Big( \big( ((2\pi i)^{-k} v\,  (u')^k) |_{2k} \gamma\big) (\tau) \Big);
\]
equivalently, $f = \sigma_d( f|_{2k} \gamma)$.  Therefore, $f=f*A$.   Since $A$ was an arbitrary element of $G$, we deduce that $f\in S_{2k}(\Gamma(N),\QQ(\zeta_N))^G$.
\end{proof}

Using Lemma~\ref{L:alphak image}, we have a linear map
\[
\alpha_k\colon H^0(X_G,\Omega^{\otimes k}_{X_G}) \to S_{2k}(\Gamma(N),\QQ(\zeta_N))^G
\]
of $\QQ$-vector spaces.  

\begin{lemma} \label{L:injectivity of alphak}
The linear map $\alpha_k$ is injective for all $k$.  The linear map $\alpha_1$ is an isomorphism.
\end{lemma}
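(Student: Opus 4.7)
The plan is to dispatch injectivity of $\alpha_k$ for all $k$ by a density argument, and then for $\alpha_1$ construct an explicit preimage in $H^0(X_G,\Omega_{X_G})$ of every $f\in S_2(\Gamma(N),\QQ(\zeta_N))^G$.

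For injectivity, suppose $\alpha_k(\omega)=0$. Then the pullback of $\omega$ along the composition $\mathfrak{H}\to\Gamma_G\backslash\mathfrak{H}\hookrightarrow X_G(\CC)$ equals $(2\pi i)^k\cdot 0\cdot(d\tau)^k=0$, so $\omega$ vanishes on the dense open subset $\Gamma_G\backslash\mathfrak{H}$ of the irreducible compact Riemann surface $X_G(\CC)$; since $\omega$ is a holomorphic section of $\Omega_{X_G}^{\otimes k}$, we conclude $\omega=0$.

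For surjectivity of $\alpha_1$, fix $f\in S_2(\Gamma(N),\QQ(\zeta_N))^G$. The subgroup $G\cap\SL_2(\ZZ/N\ZZ)$ is the image of $\Gamma_G$ modulo $N$, so invariance under it places $f$ in $S_2(\Gamma_G,\CC)$, and the $\Gamma_G$-invariant form $\omega_\CC:=2\pi i\, f(\tau)\,d\tau$ on $\mathfrak{H}$ descends to a holomorphic $1$-form on $X_G(\CC)$. It remains to show $\omega_\CC$ comes from $H^0(X_G,\Omega_{X_G})\subseteq H^0(X_{G,\CC},\Omega_{X_{G,\CC}})$. Pick any non-constant $u\in\calF_N^G$ and write $\omega_\CC=g\,du$ for a unique meromorphic function $g$ on $X_G(\CC)$. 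The expansion $(2\pi i)^{-1}u'(\tau)=\sum_n(n/N)c_n(u)q_N^n$ used in the proof of Lemma~\ref{L:alphak image} shows $(2\pi i)^{-1}u'$ has $q_N$-expansion in $\QQ(\zeta_N)$, so the same holds for $g=f/((2\pi i)^{-1}u')$; moreover $g$ is meromorphic on $\calX_N$ because $f$ is cuspidal while $u,u'$ are meromorphic at every cusp. Hence $g\in\calF_N$.

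To upgrade to $g\in\calF_N^G$, rerun the chain-rule computation from the proof of Lemma~\ref{L:alphak image} in weight $2$: for any $A\in G$ with $A\equiv\gamma\left(\begin{smallmatrix}1&0\\0&d\end{smallmatrix}\right)\pmod{N}$ and $\gamma\in\SL_2(\ZZ)$, identity~(\ref{E:d  and sigmad commute}) combined with $f*A=f$ gives $\omega_\CC *A=2\pi i(f*A)\,d\tau=\omega_\CC$, while $(du)*A=d(u*A)=du$ by functoriality of the $\GL_2(\ZZ/N\ZZ)$-action on $\calF_N$. So $(g\,du)*A=g\,du$ forces $g*A=g$, placing $\omega:=g\,du$ in $\Omega_{\calF_N^G/\QQ}$; being holomorphic on $X_G(\CC)$, it then defines an element of $H^0(X_G,\Omega_{X_G})$ with $\alpha_1(\omega)=f$ by construction. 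The principal technical point is to extend the $\GL_2(\ZZ/N\ZZ)$-action on $\calF_N$ of Lemma~\ref{L:Shimura basic} to differentials in such a way that $(h\,d\tau)*A=(h*A)\,d\tau$ for weight-$2$ forms $h$; since $d\tau=(N/(2\pi i))\,dq_N/q_N$ and $\mathrm{diag}(1,d)$ acts on $\calF_N$ by applying $\sigma_d$ to $q_N$-expansion coefficients, this is a direct verification and introduces no essentially new ingredient beyond the computation inside the proof of Lemma~\ref{L:alphak image}.
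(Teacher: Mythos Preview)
Your proof is correct, but the surjectivity argument follows a genuinely different route from the paper's.

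For injectivity, both you and the paper argue identically: the associated cusp form determines the differential on the dense open $\Gamma_G\backslash\mathfrak{H}$, hence everywhere.

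For surjectivity of $\alpha_1$, the paper proceeds by a dimension count rather than an explicit construction. It sets $H:=G\cap\SL_2(\ZZ/N\ZZ)$, observes that $S_{2}(\Gamma(N),\QQ(\zeta_N))^H=S_2(\Gamma_G,\QQ(\zeta_N))$, and then uses Galois descent for vector spaces to show that the natural map
\[
S_{2}(\Gamma(N),\QQ(\zeta_N))^G\otimes_\QQ\CC\;\longrightarrow\;S_2(\Gamma_G,\CC)
\]
is an isomorphism. Tensoring $\alpha_1$ up to $\CC$ and composing with this isomorphism yields the classical map $H^0(X_G(\CC),\Omega_{X_G(\CC)})\to S_2(\Gamma_G,\CC)$, which is known to be an isomorphism (Shimura); the result for $\alpha_1$ follows. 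Your approach instead fixes $f$, writes $2\pi i f\,d\tau=g\,du$ with $u\in\calF_N^G$, and shows directly that $g\in\calF_N^G$ by verifying that the induced $\GL_2(\ZZ/N\ZZ)$-action on $\Omega_{\calF_N/\QQ}$ is compatible with the $*$-action on weight-$2$ forms via the identification $h\,d\tau\leftrightarrow h$. This compatibility check (which you flag as the ``principal technical point'') is essentially the content of equation~(\ref{E:d  and sigmad commute}) together with the chain rule, so the verification goes through. The paper's descent argument is shorter and avoids ever touching the action on differentials; your argument is more hands-on and actually produces the preimage, which is closer in spirit to the proof of Lemma~\ref{L:alphak image} that it inverts.
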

\begin{proof}
Define $H=G\cap \SL_2(\ZZ/N\ZZ)$.   Since $H$ is the image of  $\Gamma_G$ modulo $N$, we have $S_{2k}(\Gamma(N),\QQ(\zeta_N))^H = S_{2k}(\Gamma_G,\QQ(\zeta_N))$.    In particular, $S_{2k}(\Gamma(N),\QQ(\zeta_N))^G \subseteq S_{2k}(\Gamma_G,\CC)$.   Let
\[
\iota\colon S_{2k}(\Gamma(N),\QQ(\zeta_N))^G\otimes_\QQ \CC  \to S_{2k}(\Gamma_G,\CC) 
\]
be the $\CC$-linear map induced by the inclusion.

We claim that $\iota$ is an isomorphism.  The group $H$ is normal in $G$, so we obtain a right action of $G/H$ on $S_{2k}(\Gamma_G,\QQ(\zeta_N))$.  Let $\varphi\colon G\to \Gal(\QQ(\zeta_N)/\QQ)$ be the homomorphism satisfying $\varphi(A)(\zeta_N)=\zeta_N^{\det A}$; it is surjective since $\det(G)=(\ZZ/N\ZZ)^\times$.  We obtain an action of $\Gal(\QQ(\zeta_N)/\QQ)$ on $S_{2k}(\Gamma_G,\QQ(\zeta_N))$ by using the action of $G/H$ and the isomorphism $G/H\xrightarrow{\sim} \Gal(\QQ(\zeta_N)/\QQ)$ induced by $\varphi$; note that we can view this as a left action $\bullet$ since $G/H$ is abelian.   With this new action,  we have $\sigma \bullet (c f )= \sigma(c) \, (\sigma \bullet f)$ for all $c\in \QQ(\zeta_N)$, $f\in S_{2k}(\Gamma_G,\QQ(\zeta_N))$ and $\sigma\in \Gal(\QQ(\zeta_N)/\QQ)$.  By Galois descent for vector spaces (see the corollary to Proposition~6 in Chapter V \S10 of \cite{MR1994218}), the natural homomorphism 
\begin{align}\label{E:S2k descent}
S_{2k}(\Gamma(N),\QQ(\zeta_N))^G\otimes_\QQ \QQ(\zeta_N) = S_{2k}(\Gamma_G,\QQ(\zeta_N))^{\Gal(\QQ(\zeta_N)/\QQ)} \otimes_\QQ \QQ(\zeta_N) \to S_{2k}(\Gamma_G,\QQ(\zeta_N)) 
\end{align}
is an isomorphism of $\QQ(\zeta_N)$-vector spaces.  Since $S_{2k}(\Gamma_G,\CC)$ has a basis with coefficients in $\QQ(\zeta_N)$, we deduce that $\iota$ is an isomorphism by tensoring (\ref{E:S2k descent}) up to $\CC$.

By tensoring $\alpha_k$ up to $\CC$ and composing with $\iota$, we obtain a $\CC$-linear map
\[
\beta_k\colon H^0(X_G(\CC),\Omega^{\otimes k}_{X_G(\CC)})=H^0(X_G,\Omega^{\otimes k}_{X_G})\otimes_\QQ \CC \to S_{2k}(\Gamma_G,\CC).
\]
Since $\iota$ is an isomorphism, it suffices to prove that $\beta_k$ is injective and that $\beta_1$ is an isomorphism.   For any holomorphic $k$-form $\omega$ on $X_G(\CC)$, $f:=\beta_k(\omega)$ is the \emph{unique} cusp form in $S_{2k}(\Gamma_G,\CC)$ such that the form on $\mathfrak{H}$ induced by $\omega$ equals $(2\pi i)^k f(\tau) (d\tau)^k$.    So $\beta_k$ is indeed injective.   The linear map $\beta_1$ is an isomorphism, cf.~Corollary 2.17 of \cite{MR1291394}.
\end{proof}

Let $R_{X_G}= \bigoplus_{k\geq 0} H^0(X_G, \Omega_{X_G}^{\otimes k})$ be the canonical ring of $X_G$.   Using the linear maps $\alpha_k$, we obtain a homomorphism 
\begin{align} \label{E:graded isomorphism alpha}
\alpha\colon R_X \to \bigoplus_{k\geq 0} S_{2k}(\Gamma_G,\QQ(\zeta_N))^G
\end{align}
of graded rings, where multiplication on the right hand side is multiplication of functions.   From Lemma~\ref{L:injectivity of alphak}, the homomorphism $\alpha$ is injective, and $\alpha_1\colon H^0(X_G, \Omega_{X_G}) \to S_{2}(\Gamma_G,\QQ(\zeta_N))^G$ is an isomorphism.

\section{Canonical map} \label{S:canonical}

\subsection{Setup}  \label{SS:setup}
Fix an integer $N\geq 1$ and a subgroup $G$ of $\GL_2(\ZZ/N\ZZ)$ that satisfies $-I\in G$ and $\det(G)=(\ZZ/N\ZZ)^\times$.    

From \S\ref{SS:SL2 action}, and the algorithm of \S\ref{SS:the algorithm}, we can compute a basis of the $\QQ$-vector space $S_2(\Gamma(N),\QQ(\zeta_N))$ and, with respect to this basis, compute the right action of $\SL_2(\ZZ)$.  Using the action of $\GL_2(\ZZ/N\ZZ)$ on $S_2(\Gamma(N),\QQ(\zeta_N))$ from \S\ref{SS:intro modular curves}, one can then compute a basis $f_1,\ldots, f_g$ of the $\QQ$-vector space $S_2(\Gamma(N),\QQ(\zeta_N))^G$.  Each $f_j$ is given by a $q$-expansion for which an arbitrary number of its coefficients can be computed.   

Let $\omega_1,\ldots, \omega_g$ be the basis of the $\QQ$-vector space $H^0(X_G,\Omega_{X_G}^1)$ that satisfies $\alpha_1(\omega_j)=f_j$, where $\alpha_1$ is the isomorphism $H^0(X_G,\Omega_{X_G}) \xrightarrow{\sim} S_{2}(\Gamma(N),\QQ(\zeta_N))^G$ from \S\ref{S:modular curves}.   Observe that $g$ is the genus of the modular curve $X_G$.     We shall assume that $g\geq 2$.  

Let 
\[
\varphi\colon X_G \to \PP^{g-1}_\QQ
\]
be the canonical morphism corresponding to the basis $\omega_1,\ldots, \omega_g$ and denote its image by $C$.   The goal of \S\ref{S:canonical} is to describe how to compute the ideal $I(C) \subseteq \QQ[x_1,\ldots, x_g]$ of $C$, and hence the curve $C\subseteq \PP^{g-1}_\QQ$, from the cusps forms $f_1,\ldots, f_g$.   If $X_G$ is not hyperelliptic, then $\varphi$ will be an embedding and hence we will have found a model for $X_G$.

\subsection{Background}

Let $X$ be a smooth, projective and geometrically irreducible curve defined over a field $k$.  In our application, the curve $X$ will be the modular curve $X_G$ defined over $\QQ$.  Denote the genus of $X$ by $g$ and assume that $g\geq 2$.   Fix a basis $\omega_1,\ldots, \omega_g$ of the $k$-vector space $H^0(X,\Omega^1_X)$; it gives rise to a non-constant morphism
\[
\varphi\colon X\to \PP^{g-1}_k.
\] 

Define the curve $C:=\varphi(X)$ and let $I(C) \subseteq k[x_1,\ldots, x_g]$ be the homogeneous ideal of $C$.   We have $I(C)=\oplus_{d\geq 0} I_d(C)$, where $I_d(C)$ consists of the homogeneous polynomials in $I(C)$ of degree $d$.   

 We say that $X$ is \emph{hyperelliptic} if there is a morphism $X_{\kbar} \to \PP^1_{\kbar}$ of degree $2$ (by the following lemmas, this is equivalent to there being a morphism $X\to Y$ of degree $2$ with $Y$ a curve of genus $0$).  We first consider the case where $X$ is not hyperelliptic. 

\begin{lemma} \label{L:canonical map, not hyperelliptic}
Suppose that $X$ is not hyperelliptic.  
\begin{romanenum}
\item \label{L:canonical map, not hyperelliptic i}
The morphism $\varphi$ is an embedding.  In particular, $X$ and $C$ are isomorphic.
\item \label{L:canonical map, not hyperelliptic ii}
We have $\dim_k I_2(C) = (g-2)(g-3)/2$ and $\dim_k I_3(C) = (g - 3)(g^2 + 6g - 10)/6$.
\item \label{L:canonical map, not hyperelliptic iii}
If $g\geq 4$, then the ideal $I$ is generated by $I_2(C) $ and $I_3(C)$.
\item \label{L:canonical map, not hyperelliptic iv}
If $g =3$, then the ideal $I$ is generated by $I_4(C)$ and $\dim_k I_4(C)=1$.
\end{romanenum}
\end{lemma}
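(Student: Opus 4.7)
The proof rests on classical results about the canonical embedding of curves. I would organize it around three ingredients: the base-point-free / separation criterion for part (\ref{L:canonical map, not hyperelliptic i}), Max Noether's theorem on projective normality for part (\ref{L:canonical map, not hyperelliptic ii}), and Petri's theorem (with the Enriques--Babbage refinement) for part (\ref{L:canonical map, not hyperelliptic iii}); part (\ref{L:canonical map, not hyperelliptic iv}) reduces to the standard description of smooth plane quartics. Since being a closed immersion and the dimensions $\dim_k I_d(C)$ both commute with base change to $\kbar$, I may assume throughout that $k$ is algebraically closed.

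For (\ref{L:canonical map, not hyperelliptic i}), I would apply the standard criterion that $\varphi_{|D|}$ is a closed immersion if and only if $h^0(X,D-P-Q) = h^0(X,D) - 2$ for every pair of points $P,Q \in X$ (allowing $P=Q$). Setting $D = K_X$ and applying Riemann--Roch (equivalently, Serre duality) to $K_X - P - Q$, this condition reduces to $h^0(X,P+Q) = 1$ for every such pair, i.e.\ to the non-existence of a degree-$2$ morphism $X \to \PP^1$. This is exactly the hypothesis that $X$ is not hyperelliptic.

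For (\ref{L:canonical map, not hyperelliptic ii}), I would invoke Max Noether's theorem: for a non-hyperelliptic curve $X$ of genus $g \geq 3$, the multiplication maps
\[
\mathrm{Sym}^d H^0(X, \Omega_X^1) \longrightarrow H^0(X, \Omega_X^{\otimes d})
\]
are surjective for every $d \geq 1$. This identifies $k[x_1,\ldots,x_g]_d / I_d(C)$ with $H^0(X, \Omega_X^{\otimes d})$. For $d \geq 2$, Riemann--Roch gives $h^0(X, \Omega_X^{\otimes d}) = (2d-1)(g-1)$, so
\[
\dim_k I_d(C) \;=\; \binom{g+d-1}{d} \,-\, (2d-1)(g-1).
\]
Substituting $d=2$ and $d=3$ and simplifying yields the two claimed formulas.

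For (\ref{L:canonical map, not hyperelliptic iii}), I would cite Petri's theorem together with the Enriques--Babbage refinement: for a non-hyperelliptic curve of genus $g \geq 4$, the canonical ideal $I(C)$ is generated by its quadratic part $I_2(C)$ except when $C$ is trigonal or is a smooth plane quintic of genus $6$, in which cases the cubics are also needed. In every case $I(C)$ is generated by $I_2(C) \cup I_3(C)$. For (\ref{L:canonical map, not hyperelliptic iv}), when $g = 3$ the canonical morphism is an embedding into $\PP^2_k$ of degree $2g-2 = 4$, so $C$ is a smooth plane quartic (consistent with the genus--degree formula $(4-1)(4-2)/2 = 3$); hence $C$ is cut out by a single irreducible quartic $F$, giving $I(C) = (F)$, $\dim_k I_4(C) = 1$, and $I_d(C) = 0$ for $d < 4$. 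The main obstacle here is Petri's theorem itself: its proof is substantial, requiring a careful analysis of the Koszul syzygies of the canonical ring, and I would simply invoke it from a standard reference (such as Arbarello--Cornalba--Griffiths--Harris) rather than reproduce it.
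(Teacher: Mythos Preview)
Your proof is correct and follows essentially the same route as the paper: reduce to algebraically closed $k$, use the canonical embedding criterion (the paper cites Hartshorne IV~\S5), compute $\dim_k I_d(C)$ via Riemann--Roch together with projective normality of the canonical curve (you name this as Max Noether's theorem, while the paper simply asserts that the $d$-th graded piece of $k[x_1,\ldots,x_g]/I(C)$ has dimension $l(dH)$), and then invoke Petri's theorem for part~(\ref{L:canonical map, not hyperelliptic iii}) and the plane-quartic description for part~(\ref{L:canonical map, not hyperelliptic iv}). Your write-up is slightly more explicit about which classical results are doing the work, but the structure and content match.
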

\begin{proof}
First assume that $k$ is algebraically closed.   The morphism $\varphi$ is an embedding and the curve $C\subseteq \PP_k^{g-1}$ has degree $2g-2$, cf.~\cite{MR0463157}*{IV~\S5}.  Let $H$ be a hyperplane section of $C\subseteq \PP^{g-1}_k$.  Fix an integer $d\geq 2$.  We have $\deg(dH)=d(2g-2) > 2g-2$ and hence $l(dH)=d(2g-2)-g+1$ by Riemann--Roch.  The $d$-th component of the graded ring $k[x_1,\ldots, x_g]/I(C)$ has dimension $l(dH)$ and hence $\dim_k I_d(C) = \binom{g-1+d}{d}-l(dH) = \binom{g-1+d}{d}- d(2g-2)+g-1$.  The claimed dimensions in the lemma are now immediate.  Part (\ref{L:canonical map, not hyperelliptic iii}) is a theorem of Petri, cf.~\cite{MR943539}.    Finally suppose that $g=3$ and let $F$ be a generator of the vector space $I_4(C)$.  Since $C$ is a smooth curve of genus $3$, this implies that $C$ is defined by $F=0$ and hence $I(C)$ is generated by $F$.

Now consider a general field $k$.  The forms $\omega_1,\ldots, \omega_g$ are also a basis of $H^0(X_{\kbar},\Omega_{X_{\kbar}}^1)=H^0(X,\Omega_{X}^1)\otimes_k \kbar$.  With this basis fixed, the natural map $I_d(C) \otimes_k \kbar \to I_d(C_{\kbar})$ is an isomorphism for all $d\geq 0$.  It is now easy to deduce the lemma from the algebraically closed case.
\end{proof}

\begin{lemma}  \label{L:canonical map, hyperelliptic}
Suppose that $X$ is hyperelliptic.   
\begin{romanenum}
\item
The curve $C$ has genus $0$ and $X\xrightarrow{\varphi} C$ has degree $2$.
\item
The ideal $I(C)$ is generated by $I_2(C)$ and $\dim_k I_2(C) = (g-1)(g-2)/2$.   
\end{romanenum}
\end{lemma}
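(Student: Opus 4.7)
The plan is to reduce everything to a classical fact about the rational normal curve. First I will base change to $\kbar$, as in the last paragraph of the proof of Lemma~\ref{L:canonical map, not hyperelliptic}: the forms $\omega_1,\ldots,\omega_g$ remain a basis of $H^0(X_{\kbar},\Omega_{X_{\kbar}}^1)$, the ideals satisfy $I(C)\otimes_k\kbar=I(C_{\kbar})$, the degree and genus of a morphism of curves are unchanged by base change, and (by faithful flatness) an inclusion of graded ideals is an equality iff it becomes one after $\otimes_k\kbar$. So it suffices to prove the lemma assuming $k=\kbar$, and I fix a hyperelliptic map $\pi\colon X\to\PP^1$ of degree $2$.

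The key input is the structure of the canonical system on a hyperelliptic curve: if $P_\infty$ is a point of $\PP^1$, then $K_X$ is linearly equivalent to $(g-1)\,\pi^*(P_\infty)$, and pullback induces an isomorphism $\pi^*\colon H^0(\PP^1,\mathcal O(g-1))\xrightarrow{\sim} H^0(X,\Omega_X^1)$. Consequently the canonical morphism $\varphi$ factors as
\[
X\xrightarrow{\ \pi\ }\PP^1\xrightarrow{\ \nu\ } \PP^{g-1},
\]
where $\nu$ is (up to a linear change of coordinates determined by the basis $\omega_1,\ldots,\omega_g$) the degree $g-1$ Veronese embedding. In particular $C=\nu(\PP^1)$ is a rational normal curve of degree $g-1$ in $\PP^{g-1}$, so $C$ has genus $0$, $\nu\colon\PP^1\to C$ is an isomorphism, and $X\xrightarrow{\varphi}C$ has the same degree as $\pi$, namely $2$. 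This gives (i).

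For (ii), since $C\cong\PP^1$ with the degree $g-1$ embedding, Riemann--Roch on $\PP^1$ gives $\dim_k H^0(C,\mathcal O_C(d))=d(g-1)+1$ for every $d\ge 0$, so
\[
\dim_k I_d(C)=\binom{g-1+d}{d}-d(g-1)-1.
\]
Setting $d=2$ yields $\dim_k I_2(C)=(g-1)(g-2)/2$. To see that $I(C)$ is generated by $I_2(C)$, I will invoke the classical presentation of the rational normal curve: after a suitable linear change of coordinates, $C$ is defined by the vanishing of all $2\times 2$ minors of the catalecticant matrix
\[
\begin{pmatrix} y_0 & y_1 & \cdots & y_{g-2} \\ y_1 & y_2 & \cdots & y_{g-1} \end{pmatrix},
\]
and these $\binom{g-1}{2}=(g-1)(g-2)/2$ quadrics generate $I(C)$. (Equivalently, by the Eagon--Northcott complex, or directly by showing that multiplication $I_2(C)\otimes H^0(C,\mathcal O_C(d-2))\to I_d(C)$ is surjective using the Riemann--Roch dimensions above.)

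The main obstacle is the last step: proving that the ideal of a rational normal curve is generated by quadrics. This is classical but does require a genuine argument (not just a dimension count). In the write-up I would either cite it as standard or give a self-contained proof via the catalecticant minors, checking that the quotient of $k[x_1,\ldots,x_g]$ by the ideal they generate has the correct Hilbert function $d(g-1)+1$ in each degree $d$. A minor secondary issue is the base-change justification in the first paragraph, but this is routine given the flatness of $k\hookrightarrow\kbar$.
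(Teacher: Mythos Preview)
Your proof is correct and follows essentially the same route as the paper: reduce to $k=\kbar$, identify $C$ with a rational normal curve of degree $g-1$ (the paper just cites Hartshorne IV \S5 for this, while you spell out the factorization $X\xrightarrow{\pi}\PP^1\xrightarrow{\nu}\PP^{g-1}$), and then use Riemann--Roch on $C\cong\PP^1$ for the dimension count. The paper likewise treats ``$I(C)$ is generated by quadrics'' as a known fact about rational normal curves without proving it; your catalecticant/Eagon--Northcott remark simply supplies more detail than the paper does.
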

\begin{proof}
As in the proof of Lemma~\ref{L:canonical map, not hyperelliptic}, the natural map $I_d(C)\otimes_k \kbar \to I_d(C_{\kbar})$ is an isomorphism for all $d\geq 0$.  It is now easy to show that the general case reduced to the case where $k$ is algebraically closed.

Assume $k$ is algebraically closed.   The morphism $\varphi\colon X\to C$ has degree $2$ and the curve $C\subseteq \PP_k^{g-1}$ is a rational normal curve of degree $g-1$, cf.~\cite{MR0463157}*{IV~\S5}.  Since $C$ is a rational normal curve, it is of genus $0$ and $I(C)$ is generated by $I_2(C)$.

Let $H$ be a hyperplane section of $C\subseteq \PP^{g-1}_k$.  Fix an integer $d\geq 2$.  We have $\deg(dH)=d(g-1) >0$ and hence $l(dH)=d(g-1)-0+1$ by Riemann--Roch.  The $d$-th component of the graded ring $k[x_1,\ldots, x_g]/I(C)$ has dimension $l(dH)$ and hence $\dim_k I_d(C) = \binom{g-1+d}{d}-l(dH) = \binom{g-1+d}{d}- d(g-1)-1$.  The claimed dimension for $I_2(C)$ is now immediate.
\end{proof}

\subsection{Computing $I_d(C)$}  \label{S:computing Id}

Fix notation and assumptions as in \S\ref{SS:setup}.  In this section, we describe how to compute $I_d(C)$ for a fixed integer $d\geq 0$.   We may assume that $d\geq 2$ since $I_0(C)=I_1(C)=0$.  

Let $\Gamma_G$ be the congruence subgroup of $\SL_2(\ZZ)$ from \S\ref{S:modular curves} and let $w$ be the width of $\Gamma_G$ at $\infty$.  Note that $w$ is the smallest positive integer for which $\left(\begin{smallmatrix}1 & w \\0 & 1\end{smallmatrix}\right)$ modulo $N$ lies in $G$.  The $q$-expansion of any cusp form $f \in S_k(\Gamma(N),\QQ(\zeta_N))^G \subseteq S_k(\Gamma_G,\CC)$ is a power series in $q_w:=e^{2\pi i \tau/w}$ since $f| \left(\begin{smallmatrix}1 & w \\0 & 1\end{smallmatrix}\right) =f$.   

Let $M_d$ be the set of monomials in $\QQ[x_1,\ldots, x_g]$ of degree $d$.   For each $m\in M_d$, we have $m(f_1,\ldots,f_g) \in S_{2d}(\Gamma(N),\QQ(\zeta_N))^G$ and hence
\[
m(f_1,\ldots, f_g) = \sum_{n=0}^\infty a_{m,n} q_w^{n}
\]
for unique $a_{m,n} \in \QQ(\zeta_N)$.

\begin{lemma}
Consider a homogeneous polynomial $F\in \QQ[x_1,\ldots, x_g]$ of degree $d$; we have $F= \sum_{m\in M_d} c_m m$ for unique $c_m\in \QQ$.  Then $F$ is an element of $I_d(C)$ if and only if 
\begin{align} \label{E:cm linear}
\sum_{m\in M_d} a_{m,n} \,c_m = 0
\end{align}
holds for all $0\leq n \leq d(2g-1)$.
\end{lemma}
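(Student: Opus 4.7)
The plan is to recast the condition $F \in I_d(C)$ as the vanishing of the holomorphic $d$-form $F(\omega_1,\ldots,\omega_g)$ on $X_G$, translate this into the vanishing of the cusp form $F(f_1,\ldots,f_g)$ via the injective graded ring homomorphism $\alpha$ from \S\ref{S:modular curves}, and then use the degree of the canonical divisor to bound the number of $q_w$-coefficients that must be checked.

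First I would observe that the canonical morphism $\varphi\colon X_G \to \PP^{g-1}_\QQ$ surjects onto $C$, so $\varphi^\ast F = F(\omega_1,\ldots,\omega_g)$, viewed as a section of $\Omega_{X_G}^{\otimes d}$, vanishes identically if and only if $F$ vanishes on $C$. Applying $\alpha_d$ from \eqref{E:graded isomorphism alpha}, using that $\alpha$ is a ring homomorphism with $\alpha_1(\omega_j)=f_j$, we obtain $\alpha_d(F(\omega_1,\ldots,\omega_g)) = F(f_1,\ldots,f_g) \in S_{2d}(\Gamma_G,\CC)$. Injectivity of $\alpha_d$ from Lemma~\ref{L:injectivity of alphak} then shows that $F\in I_d(C)$ if and only if every $q_w$-coefficient $h_n := \sum_{m\in M_d} a_{m,n}\,c_m$ of $F(f_1,\ldots,f_g)$ vanishes. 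This immediately yields the ``only if'' direction.

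For the ``if'' direction, suppose $h_n = 0$ for $0 \leq n \leq d(2g-1)$ and set $\eta := F(\omega_1,\ldots,\omega_g) \in H^0(X_G,\Omega_{X_G}^{\otimes d})$. Using $d\tau = (w/(2\pi i))\, dq_w/q_w$ together with $\eta = (2\pi i)^d F(f_1,\ldots,f_g)(\tau)\,(d\tau)^d$ on $\mathfrak{H}$, the expression for $\eta$ near the cusp at $\infty$ on $X_G$ (where $q_w$ is a local parameter) reads $w^d\bigl(\sum_n h_n\,q_w^{n-d}\bigr)(dq_w)^d$. Hence $\ord_\infty(\eta) = \min\{n : h_n\neq 0\} - d \geq (d(2g-1)+1) - d = d(2g-2)+1$. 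But if $\eta$ were non-zero then $\operatorname{div}(\eta)$ would be an effective divisor of degree $d\deg(K_{X_G}) = d(2g-2)$, contradicting the lower bound on $\ord_\infty(\eta)$. Therefore $\eta = 0$, and the previous paragraph gives $F\in I_d(C)$.

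The main obstacle is pinning down the sharp bound $d(2g-1)$: it emerges from combining the degree $d(2g-2)$ of the $d$-fold canonical divisor on $X_G$ with the shift by $d$ between the leading exponent in the $q_w$-expansion of a weight-$2d$ cusp form lying in the image of $\alpha_d$ and the order of vanishing at $\infty$ of the corresponding holomorphic $d$-form. Once this dictionary is set up correctly, the rest reduces to standard facts about canonical maps and to the properties of $\alpha$ already established.
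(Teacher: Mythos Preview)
Your proof is correct and follows essentially the same approach as the paper: both identify $F\in I_d(C)$ with the vanishing of $F(f_1,\ldots,f_g)$ via the injective map $\alpha$, and both bound the lowest nonvanishing $q_w$-exponent by comparing the order of vanishing at the cusp $\infty$ of the associated holomorphic $d$-form with the degree $d(2g-2)$ of its (effective) divisor. The only cosmetic difference is that the paper argues the bound by contrapositive (assume $F\notin I_d(C)$ and show $\nu\le d(2g-1)$), whereas you argue it directly by assuming the first $d(2g-1)+1$ coefficients vanish and forcing $\eta=0$.
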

\begin{proof}
From the injective homomorphism (\ref{E:graded isomorphism alpha}) of graded rings, we find that $F$ lies in $I_d(C)$ if and only if $F(f_1,\ldots, f_g)=0$.  We have 
\[
F(f_1,\ldots, f_g) = \sum_{m\in M_d} c_m m(f_1,\ldots, f_g)= \sum_{n=0}^\infty \big(\sum_{m\in M_d} a_{m,n} c_m\big) q_w^n.
\]
So $F$ lies in $I_d(C)$ if and only if (\ref{E:cm linear}) holds for all $n\geq 0$.  One implication of the lemma is now immediate.

Now suppose that $F \notin I_d(C)$; equivalently, $F(f_1,\ldots, f_g)\neq 0$.   Let $\nu$ be the smallest integer for which the coefficient of $q_w^\nu$ in the $q$-expansion of $F(f_1,\ldots, f_g)$ is non-zero.  It thus suffices to prove that $\nu\leq d(2g-1) $.

 The differential form $f_j(\tau) d\tau$ on $\Gamma_G\backslash \mathfrak{H}$ extends to a holomorphic differential $1$-form on $X_G(\CC)$ for each $1\leq j \leq g$.   Define $\omega:= F(f_1(\tau),\ldots, f_g(\tau))\, (d\tau)^{d}$.  Since $F$ is homogeneous of degree $d$, $F(f_1(\tau),\ldots, f_g(\tau))\, (d\tau)^{d}$ gives rise to a holomorphic differential $d$-form $\omega$ on $X_G(\CC)$.    We have $\omega\neq 0$ since $F(f_1,\ldots, f_g)\neq 0$.   

The divisor of $\omega$ is effective and has degree $d(2g-2)$.  Therefore, $v_P(\omega)\leq  d(2g-2)$, where $P\in X_G(\CC)$ is the cusp at infinity and $v_P(\omega)$ is the order of vanishing of $\omega$ at $P$.   One can verify that $v_P(\omega) = \nu -d$.   Therefore, $\nu \leq  d(2g-2) +d=d(2g-1)$. 
\end{proof}

By the above lemma, $I_d(C)$ consists of the polynomials $\sum_{m\in M_d} c_m m$ with $c_m\in \QQ$ such that (\ref{E:cm linear}) holds for all $0\leq n \leq d(2g-1)$.    So given the values $a_{m,n} \in \QQ(\zeta_N)$ with $m\in M_d$ and $0\leq n \leq d(2g-1)$, computing a basis of $I_d(C)$ is basic linear algebra (note that since $\QQ(\zeta_N)$ over $\QQ$ has basis $1,\zeta_N,\ldots, \zeta_N^{\varphi(N)-1}$, each equation (\ref{E:cm linear}) can be replaced by $\varphi(N)$ linear equations with rational coefficients).    \\

It remains to explain how $a_{m,n}$ can be computed for fixed $m\in M_d$ and $0\leq n \leq d(2g-1)$; recall that the $f_j$ are given by their $q$-expansions and we can compute an arbitrary number of terms.  The $q$-expansion of each cusp form $f_j$ lies in $q_w\cdot \QQ(\zeta_N)\brak{q_w}$.    Using this and that $m$ is homogeneous of degree $d$, one can check that $a_{m,n}$ is determined by the coefficients of $q_w^i$ in the $q$-expansion of $f_j$ for all  $0\leq i \leq n-d$ and $1\leq j \leq g$.    

In particular,  we deduce that $I_d(C)$ can be computed from the coefficients of $q_w^i$ in the $q$-expansion of $f_j$ for all  $0\leq i \leq d(2g-1)-d=d(2g-2)$ and $1\leq j \leq g$.    

\subsection{Computing the curve $C$}

Fix notation and assumptions as in \S\ref{SS:setup}.  To compute the curve $C\subseteq \PP^{g-1}_\QQ$, it suffices to find a set of generators of the ideal $I(C) \subseteq \QQ[x_1,\ldots, x_g]$.  We have assumed that $g\geq 2$.  We may further assume that $g\geq 3$ since $C=\PP^1_\QQ$ and $I(C)=0$ if $g=2$.

From \S\ref{S:computing Id}, we can compute the $\QQ$-vector space $I_2(C)$ from the cusps forms $f_1,\ldots, f_g$; let $F_1,\ldots, F_r$ be a basis.   By Lemmas~\ref{L:canonical map, not hyperelliptic} and \ref{L:canonical map, hyperelliptic}, and using $g\geq 3$, we find that $X_G$ is hyperelliptic if and only if $r=(g-1)(g-2)/2$.     If $X_G$ is hyperelliptic, Lemma~\ref{L:canonical map, hyperelliptic} implies that the curve $C$ has genus $0$ and the ideal $I(C)$ is generated by $F_1,\ldots, F_r$.  We may now assume that $X_G$ is not hyperelliptic.

Suppose $g=3$.  By Lemma~\ref{L:canonical map, not hyperelliptic}, $I_4(C)$ has dimension $1$ and generates the ideal $I(C)$.    From \S\ref{S:computing Id}, we can compute the $\QQ$-vector space $I_4(C)$; let $F$ be a basis.      The ideal $I(C)$ is thus generated by $F$ and hence $C \subseteq \PP^2$ is the smooth plane quartic defined by $F=0$.  

Now assume that $g\geq 4$.   By Lemma~\ref{L:canonical map, not hyperelliptic}, the ideal $I(C)$ is generated by $I_2(C)$ and $I_3(C)$, and $I_3(C)$ has dimension $(g - 3)(g^2 + 6g - 10)/6$.   Let $W$ be the subspace of $I_3(C)$ generated by $x_i F_j$ with $1\leq i \leq g$ and $1\leq j \leq r$.    If $W$ has dimension $(g - 3)(g^2 + 6g - 10)/6$, then $W=I_3(C)$ and hence $I(C)$ is generated by $F_1,\ldots, F_r$.

Finally suppose that the dimension of $W$ is not $(g - 3)(g^2 + 6g - 10)/6$.    From \S\ref{S:computing Id}, we can compute the $\QQ$-vector space $I_3(C)$.   Let $G_1,\ldots, G_s$ be polynomials in $I_3(C)$ that give rise to a basis in $I_3(C)/W$.   The ideal $I(C)$ is thus generated by $F_1,\ldots, F_r, G_1,\ldots, G_s$.   It is not needed for our purposes, but one can further show that $s=g-3$.

\begin{remark}
One can choose $f_1,\ldots, f_g$ to be a basis of the $\ZZ$-module $S_2(\Gamma(N),\QQ(\zeta_N))^G \cap S_2(\Gamma(N),\ZZ[\zeta_N])$.   We can choose $F_1,\ldots, F_r$ to be basis of the $\ZZ$-module $I_2(C)\cap \ZZ[x_1,\ldots, x_g]$.    The LLL-algorithm can be used to makes such choices with relatively small coefficients.
\end{remark} 
 
 \bibliographystyle{plain}
\begin{bibdiv}
\begin{biblist}

\bib{MR0427235}{article}{
   author={Asai, Tetsuya},
   title={On the Fourier coefficients of automorphic forms at various cusps
   and some applications to Rankin's convolution},
   journal={J. Math. Soc. Japan},
   volume={28},
   date={1976},
   number={1},
   pages={48--61},
   issn={0025-5645},
   review={\MR{0427235}},
}

\bib{MR508986}{article}{
   author={Atkin, A. O. L.},
   author={Li, Wen Ch'ing Winnie},
   title={Twists of newforms and pseudo-eigenvalues of $W$-operators},
   journal={Invent. Math.},
   volume={48},
   date={1978},
   number={3},
   pages={221--243},
   issn={0020-9910},
   review={\MR{508986}},
}

\bib{MR3263141}{article}{
   author={Banwait, Barinder S.},
   author={Cremona, John E.},
   title={Tetrahedral elliptic curves and the local-global principle for
   isogenies},
   journal={Algebra Number Theory},
   volume={8},
   date={2014},
   number={5},
   pages={1201--1229},
   issn={1937-0652},
   review={\MR{3263141}},
}

\bib{MR3253304}{article}{
   author={Baran, Burcu},
   title={An exceptional isomorphism between modular curves of level 13},
   journal={J. Number Theory},
   volume={145},
   date={2014},
   pages={273--300},
   issn={0022-314X},
   review={\MR{3253304}},
   doi={10.1016/j.jnt.2014.05.017},
}

\bib{MR3709060}{article}{
   author={Bertolini, Massimo},
   author={Darmon, Henri},
   author={Prasanna, Kartik},
   title={$p$-adic $L$-functions and the coniveau filtration on Chow groups},
   note={With an appendix by Brian Conrad},
   journal={J. Reine Angew. Math.},
   volume={731},
   date={2017},
   pages={21--86},
   issn={0075-4102},
}

\bib{BN2019}{article}{
  author={Brunault, Fran\c{c}ois},
  author={Neururer, Michael},
  title={Fourier expansions at cusps},
  journal={The Ramanujan Journal}
  date={2019},
}

\bib{MR1727221}{book}{
   author={Bourbaki, Nicolas},
   title={Commutative algebra. Chapters 1--7},
   series={Elements of Mathematics (Berlin)},
   note={Translated from the French;
   Reprint of the 1989 English translation},
   publisher={Springer-Verlag, Berlin},
   date={1998},
   pages={xxiv+625},
   isbn={3-540-64239-0},
   review={\MR{1727221}},
}

\bib{MR1994218}{book}{
   author={Bourbaki, Nicolas},
   title={Algebra II. Chapters 4--7},
   series={Elements of Mathematics (Berlin)},
   note={Translated from the 1981 French edition by P. M. Cohn and J. Howie;
   Reprint of the 1990 English edition [Springer, Berlin;  MR1080964
   (91h:00003)]},
   publisher={Springer-Verlag, Berlin},
   date={2003},
   pages={viii+461},
   isbn={3-540-00706-7},
   review={\MR{1994218}},
}

\bib{MR3889557}{article}{
   author={Cohen, Henri},
   title={Expansions at cusps and Petersson products in Pari/GP},
   conference={
      title={Elliptic integrals, elliptic functions and modular forms in
      quantum field theory},
   },
   book={
      series={Texts Monogr. Symbol. Comput.},
      publisher={Springer, Cham},
   },
   date={2019},
   pages={161--181},
   review={\MR{3889557}},
}

  \bib{Collins}{unpublished}{
      author={Collins, Dan},
       title={Numerical computation of Petersson inner products and $q$-expansions}
        date={2018},
        note={arXiv:1802.09740},
}

\bib{MR0337993}{article}{
   author={Deligne, P.},
   author={Rapoport, M.},
   title={Les sch\'emas de modules de courbes elliptiques},
   language={French},
   conference={
      title={Modular functions of one variable, II},
      address={Proc. Internat. Summer School, Univ. Antwerp, Antwerp},
      date={1972},
   },
   book={
      publisher={Springer, Berlin},
   },
   date={1973},
   pages={143--316. Lecture Notes in Math., Vol. 349},
   review={\MR{0337993}},
}

\bib{MR3867436}{article}{
   author={Dose, Valerio},
   author={Mercuri, Pietro},
   author={Stirpe, Claudio},
   title={Double covers of Cartan modular curves},
   journal={J. Number Theory},
   volume={195},
   date={2019},
   pages={96--114},
   issn={0022-314X},
   review={\MR{3867436}},
}


\bib{MR1722413}{article}{
   author={Elkies, Noam D.},
   title={The Klein quartic in number theory},
   conference={
      title={The eightfold way},
   },
   book={
      series={Math. Sci. Res. Inst. Publ.},
      volume={35},
      publisher={Cambridge Univ. Press, Cambridge},
   },
   date={1999},
   pages={51--101},
   review={\MR{1722413}},
}

\bib{MR0463157}{book}{
   author={Hartshorne, Robin},
   title={Algebraic geometry},
   note={Graduate Texts in Mathematics, No. 52},
   publisher={Springer-Verlag, New York-Heidelberg},
   date={1977},
   pages={xvi+496},
   isbn={0-387-90244-9},
   review={\MR{0463157}},
}

\bib{MR506271}{article}{
   author={Katz, Nicholas M.},
   title={$p$-adic interpolation of real analytic Eisenstein series},
   journal={Ann. of Math. (2)},
   volume={104},
   date={1976},
   number={3},
   pages={459--571},
   issn={0003-486X},
   review={\MR{506271}},
}

  \bib{MS}{unpublished}{  
     author={Mercuri, Pietro},
     author={Schoof, Ren\'{e}}
         title={Modular forms invariant under non-split Cartan subgroups},
        date={2018},
        note={arXiv:1805.06873},
}

\bib{MR1332907}{article}{
   author={Ohta, Masami},
   title={On the $p$-adic Eichler-Shimura isomorphism for $\Lambda$-adic
   cusp forms},
   journal={J. Reine Angew. Math.},
   volume={463},
   date={1995},
   pages={49--98},
   issn={0075-4102},
}

\bib{MR3384865}{article}{
   author={Schaeffer, George J.},
   title={Hecke stability and weight 1 modular forms},
   journal={Math. Z.},
   volume={281},
   date={2015},
   number={1-2},
   pages={159--191},
   issn={0025-5874},
   review={\MR{3384865}},
}

\bib{MR1291394}{book}{
   author={Shimura, Goro},
   title={Introduction to the arithmetic theory of automorphic functions},
   series={Publications of the Mathematical Society of Japan},
   volume={11},
   note={Reprint of the 1971 original;
   Kan\^{o} Memorial Lectures, 1},
   publisher={Princeton University Press, Princeton, NJ},
   date={1994},
   pages={xiv+271},
}

\bib{MR2289048}{book}{
   author={Stein, William},
   title={Modular forms, a computational approach},
   series={Graduate Studies in Mathematics},
   volume={79},
   note={With an appendix by Paul E. Gunnells},
   publisher={American Mathematical Society, Providence, RI},
   date={2007},
   pages={xvi+268},
   isbn={978-0-8218-3960-7},
   isbn={0-8218-3960-8},
   review={\MR{2289048}},
}
\bib{MR943539}{article}{
   author={St\"{o}hr, Karl-Otto},
   author={Viana, Paulo},
   title={A variant of Petri's analysis of the canonical ideal of an
   algebraic curve},
   journal={Manuscripta Math.},
   volume={61},
   date={1988},
   number={2},
   pages={223--248},
   issn={0025-2611},
   review={\MR{943539}},
}

  \bib{Zywina-possible}{unpublished}{
      author={Zywina, David},
       title={On the possible images of the mod $\ell$ representations associated to elliptic curves over $\QQ$}
        date={2015},
        note={arXiv:1508.07660},
}

 \end{biblist}
\end{bibdiv}
 
\end{document}